\providecommand{\U}[1]{\protect\rule{.1in}{.1in}}
\providecommand{\U}[1]{\protect\rule{.1in}{.1in}}
\newtheorem{theorem}{Theorem}[section]
\newtheorem{corollary}[theorem]{Corollary}
\newtheorem{definition}[theorem]{Definition}
\newtheorem{lemma}[theorem]{Lemma}
\newtheorem{proposition}[theorem]{Proposition}
\newtheorem{remark}[theorem]{Remark}
\begin{document}
\title[Global maximum principles and divergence theorems]{Global maximum principles and divergence theorems on complete manifolds with boundary}
\author{Debora Impera}
\address{Dipartimento di Matematica e Applicazioni\\
Universit\`a di Milano--Bicocca Via Cozzi 53\\
I-20125 Milano, ITALY}
\email{debora.impera@unimib.it}
\author{Stefano Pigola}
\address{Dipartimento di Scienza e Alta Tecnologia\\
Universit\`a dell'Insubria - Como\\
via Valleggio 11\\
I-22100 Como, ITALY}
\email{stefano.pigola@uninsubria.it}
\author{Alberto G. Setti}
\address{Dipartimento di Scienza e Alta Tecnologia \\
Universit\`a dell'Insubria - Como\\
via Valleggio 11\\
I-22100 Como, ITALY}
\email{alberto.setti@uninsubria.it}
\maketitle

\begin{abstract}
In this paper we extend to non-compact Riemannian manifolds with boundary the
use of two important tools in the geometric analysis of compact spaces,
namely, the weak maximum principle for subharmonic functions and the
integration by parts. The first one is a new form of the classical Ahlfors
maximum principle whereas the second one is a version for manifolds with
boundary of the so called Kelvin-Nevanlinna-Royden criterion of parabolicity.
In fact, we will show that the validity of non-compact versions of these tools
serve as a characterization of the Neumann parabolicity of the space.

The motivation underlying this study is to obtain new information on the
geometry of graphs with prescribed mean curvature inside a Riemannian product
of the type $N\times\mathbb{R}$. In this direction two kind of results will be
presented: height estimates for constant mean curvature graphs parametrized
over unbounded domains in a complete manifold and slice type results for
graphs whose superlevel sets have finite volume.

\end{abstract}
\tableofcontents

\section*{Introduction}

This paper aims at extending to non-compact Riemannian manifolds with boundary
the use of two important tools in the geometric analysis of compact spaces,
namely, the integration by parts and the weak maximum principle for
subharmonic functions. The motivation underlying this study is mainly the
attempt to obtain new information on the geometry of graphs or, more
generally, of hypersurfaces with boundary and prescribed mean curvature inside
a Riemannian product of the type $N\times\mathbb{R}$.\bigskip

In the setting of Riemannian manifolds without boundary, it is by now well
known that parabolicity represents a good substitute of the compactness of the
underlying space, see e.g. the account in \cite{PS-Fortaleza}. Thus, in order
to extend the use of the classical tools alluded to above, we are naturally to
a deeper study of parabolicity for manifolds with boundary. As we shall see in
Appendix \ref{appendix-different} there are several concepts of parabolicity
in this setting and they are in a certain hierarchy, so one has to make a
choice. In view of our geometric purposes we decided to follow the more
traditional path, \cite{Gr, Gr1, Gr2, GN}, that, from the stochastic
viewpoint, translates into the property that the reflected Brownian motion be
recurrent. This is the strongest of the notions of parabolicity known \ in the
literature, but it is also the one which seems to be more related to the
geometry of the space. Thus, for instance, every proper minimal graph over a
smooth domain of $\mathbb{R}^{2}$ is parabolic in our traditional sense
because of its area growth property; see Appendix \ref{appendix-different}. In
order to put the precise definition of parabolicity we need to recall the
notion of weak sub (super) solution subjected to Neumann boundary
conditions.\bigskip

Let $\left(  M,g\right)  $ be an oriented Riemannian manifold with smooth
boundary $\partial M\neq\emptyset$ and exterior unit normal $\nu.$ By a domain
in $M$ we mean a non-necessarily connected open set $D\subseteq M$. We say
that the domain $D$ is smooth if its topological boundary $\partial D$ is a
smooth hypersurface $\Gamma$ with boundary $\partial\Gamma=\partial
D\cap\partial M$. Clearly, if $\partial M=\emptyset$ then the smoothness
condition reduces to the usual one. It is a standard fact that every manifold
$M$ with (possibly empty) boundary has an exhaustion by smooth pre-compact
domains. Simply choose a proper smooth function $\rho:M\rightarrow
\mathbb{R}_{\geq0}$ and, according to Sard theorem, take a sequence $\left\{
t_{k}\right\}  \nearrow+\infty$ such that $t_{k}$ is a regular value for both
$\left.  \rho\right\vert _{\mathrm{int}M}$ and $\left.  \rho\right\vert
_{\partial M}$. Then $D_{k}=\left\{  \rho<t_{k}\right\}  $ defines the desired
exhaustion with smooth boundary $\partial D_{k}=\left\{  \rho=t_{k}\right\}  $.

Adopting a notation similar to the one in \cite{Gr1}, for any domain
$D\subseteq M$ we define%
\[
\partial_{0}D=\partial D\cap\mathrm{int}M.
\]
Note also that $D$ could include part of the boundary of $M$. We therefore set%
\[
\partial_{1}D=\partial M\cap D
\]
\bigskip

Now, suppose $D\subseteq M$ is any domain. We put the following

\begin{definition}
By a weak Neumann solution $u\in W_{loc}^{1,2}\left(  D\right)  $ of the
problem%
\begin{equation}
\left\{
\begin{array}
[c]{ll}%
\Delta u\geq0 & \text{on }D\\
\dfrac{\partial u}{\partial\nu}\leq0 & \text{on }\partial_{1}D,
\end{array}
\right.  \label{subneumannproblem}%
\end{equation}
we mean that the following inequality%
\begin{equation}
-\int_{D}\left\langle \nabla u,\nabla\varphi\right\rangle \geq0 \label{subsol}%
\end{equation}
holds for every $0\leq\varphi\in C_{c}^{\infty}\left(  D\right)  $. Similarly,
by taking $D=M$, one defines the notion of weak Neumann subsolution of the
Laplace equation on $M$ as a function $u\in W_{loc}^{1,2}\left(  M\right)  $
which satisfies (\ref{subsol}) for every $0\leq\varphi\in C_{c}^{\infty
}\left(  M\right)  $. \ As usual, the notions of weak supersolution and weak
solution can be obtained by reversing the inequality or by replacing the
inequality with an equality in (\ref{subsol}), and removing the sign condition
on $\varphi$.
\end{definition}

\begin{remark}
Clearly, in the above definition, it is equivalent to require that
(\ref{subsol}) holds for every $0\leq\varphi\in Lip_{c}\left(  M\right)  $.
Note also that standard density arguments work even for manifolds with
boundary and, therefore, (\ref{subsol}) extends to all compactly supported
$0\leq\varphi\in W_{0}^{1,2}\left(  D\right)  $. Here, as usual, $W_{0}%
^{1,2}\left(  D\right)  $ denotes the closure of $C_{c}^{\infty}\left(
D\right)  $ with respect to the $W^{1,2}$-norm.
\end{remark}

\begin{remark}
Note that in the equality case we have the usual notion of variational
solution of the mixed problem
\[%
\begin{cases}
\Delta u=0 & \text{on }D\\
\dfrac{\partial u}{\partial\nu}=0 & \text{on }\partial_{1}D\\
u=0 & \text{on }\partial_{0}D.
\end{cases}
\]

\end{remark}

\begin{remark}
If $\partial M=\emptyset$ or, more generally, $D\subseteq\mathrm{int}M$, the
Neumann condition disappears and we recover the usual definition of weak sub-
(super-)solution. Obviously, in the smooth setting, a classical solution of
(\ref{subneumannproblem}) is also a weak Neumann subsolution as one can verify
using integration by parts. Actually, this is true in a more general setting.
See Definition \ref{def_weak-sol-divX} and Lemma \ref{lemma_equiv-weak-def} in
Subsection \ref{subsection-divergence}.
\end{remark}

We are now ready to give the following definition of parabolicity in the form
of a Liouville-type result.

\begin{definition}
\label{def_parab}An oriented Riemannian manifold $M$ with boundary $\partial
M\neq\emptyset$ is said to be parabolic if any bounded above, weak Neumann
subsolution of the Laplace equation on $M$ must be constant. Explicitly, for
every $u\in C^{0}\left(  M\right)  \cap W_{loc}^{1,2}\left(  M\right)  $,%
\begin{equation}
\label{def_par}%
\begin{array}
[c]{ccc}%
\left\{
\begin{array}
[c]{ll}%
\Delta u\geq0 & \text{on }M\\
\dfrac{\partial u}{\partial\nu}\leq0 & \text{on }\partial M\\
\sup_{M}u<+\infty &
\end{array}
\right.  & \Rightarrow & u\equiv\mathrm{const}.
\end{array}
\end{equation}

\end{definition}

It is known from \cite{Gr1}\ that, in case $M$ is complete with respect to the
intrinsic distance function $d$, then geometric conditions implying
parabolicity rely on volume growth properties of the space. In order to give
the precise statement it is convenient to introduce some notation. Having
fixed a reference origin $o\in\mathrm{int}M$, we set $B_{R}^{M}\left(
o\right)  =\left\{  x\in M:d\left(  x,o\right)  <R\right\}  $ and $\partial
B_{R}^{M}\left(  o\right)  =\left\{  x\in M:d\left(  x,o\right)  =R\right\}
$, the metric ball and sphere of $M$ centered at $o$ and of radius $R>0$. We
also denote by $r\left(  x\right)  =d\left(  x,o\right)  $ the distance
function from $o$. Clearly, $r\left(  x\right)  $ is Lipschitz, hence
differentiable a.e. in $\mathrm{int}M$. Moreover, for a.e. $x\in\mathrm{int}%
M$, differentiating $r$ along a minimizing geodesic from $o$ to $x$ (which
exists by completeness) we easily see that the usual Gauss Lemma holds,
namely, $\left\vert \nabla r\right\vert =1$ a.e. in $\mathrm{int}M$.
Therefore, by the co-area formula applied to $\left.  r\right\vert
_{\mathrm{int}M}$ and the fact that $\mathrm{vol}B_{R}^{M}\left(  o\right)
=\mathrm{vol}\left(  B_{R}^{M}\left(  o\right)  \cap\mathrm{int}M\right)  $,
we have%
\[
\frac{d}{dR}\mathrm{vol}B_{R}^{M}\left(  o\right)  =\mathrm{Area}\left(
\partial_{0}B_{R}^{M}\left(  o\right)  \right)  ,
\]
for a.e. $R>0$.

The following result is due to Grigor'yan \cite{Gr1}. For a proof in the
$C^{2}$ case see Theorem \ref{th_areagrowth_meancurvop} and Remark
\ref{rmk_areagrowth_meancurvop}.

\begin{theorem}
\label{th_growth}Let $\left(  M,g\right)  $ be a complete Riemannian manifold
with boundary $\partial M\neq\emptyset$. If, for some reference point $o\in
M$, either%
\[
\frac{R}{\mathrm{vol}B_{R}^{M}\left(  o\right)  }\notin L^{1}\left(
+\infty\right)
\]
or%
\[
\frac{1}{\mathrm{Area}\left(  \partial_{0}B_{R}^{M}\left(  o\right)  \right)
}\notin L^{1}\left(  +\infty\right)
\]
then $M$ is parabolic.
\end{theorem}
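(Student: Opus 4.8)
The plan is to adapt the classical capacity-type argument of Grigor'yan to the manifold-with-boundary setting, exploiting the Neumann boundary condition to make the usual integration by parts go through without boundary terms. Suppose $u\in C^{0}(M)\cap W_{loc}^{1,2}(M)$ satisfies the three conditions in \eqref{def_par}; up to replacing $u$ by $u-\sup_M u$ and multiplying by $-1$ we may assume $u\le 0$ and, if $u$ is nonconstant, that $u(o)<0$ for a suitable choice of origin, so that $\{u<0\}$ is a nonempty open set. The key point is that, because $u$ is a \emph{Neumann} subsolution, the weak inequality \eqref{subsol} holds for all $0\le\varphi\in Lip_c(M)$ \emph{including} test functions that do not vanish on $\partial M$; this is exactly what compensates for the presence of the boundary.

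First I would fix $R>0$ and a cutoff $\phi$ depending on the distance function $r$: take $\phi\equiv 1$ on $B_R^M(o)$, $\phi\equiv 0$ outside $B_{2R}^M(o)$ (or outside $B_S^M$ for a second radius $S$, to be optimized), and $|\nabla\phi|\le$ the appropriate reciprocal-of-annulus-width bound, with $\phi\in Lip_c(M)$. Then I would test \eqref{subsol} with $\varphi=\phi^{2}(u-\alpha)_{-}$ for a level $\alpha<0$ lying strictly between $\inf u$ and $0$ — more precisely $\varphi = \phi^2\,((\alpha-u)^+\wedge(\alpha-\beta))$ truncated between two levels $\beta<\alpha<0$, so that $\varphi$ is bounded, Lipschitz, compactly supported, and nonnegative. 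Expanding $\langle\nabla u,\nabla\varphi\rangle$ and using $\nabla\varphi = \phi^2\nabla(\text{truncation}) + 2\phi(\text{truncation})\nabla\phi$ together with Cauchy–Schwarz and the absorbing inequality $2ab\le \varepsilon a^2+\varepsilon^{-1}b^2$, one obtains the standard Caccioppoli estimate
\[
\int_{\{\beta<u<\alpha\}\cap B_R^M(o)} |\nabla u|^2 \le C\,(\alpha-\beta)^2 \int_{B_{S}^M(o)\setminus B_R^M(o)} |\nabla\phi|^2,
\]
and since $|\nabla\phi|^2$ integrates against $dV$ and $|\nabla r|=1$ a.e., the co-area formula converts the right-hand side into an integral of $1/\mathrm{vol}$ or $1/\mathrm{Area}$ type over $[R,S]$.

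The core of the argument is then the logarithmic-cutoff / capacity optimization: choosing $\phi$ to be (a truncation of) an optimal function for the capacity of the annulus — concretely $\phi(x)=\eta(r(x))$ where $\eta$ solves the one-dimensional minimization with weight $\mathrm{vol}B_t^M(o)$ or $\mathrm{Area}(\partial_0 B_t^M(o))$ — makes $\int|\nabla\phi|^2$ equal (up to constants) to $\big(\int_R^S \frac{t\,dt}{\mathrm{vol}B_t^M}\big)^{-1}$ in the first case and $\big(\int_R^S \frac{dt}{\mathrm{Area}(\partial_0 B_t^M)}\big)^{-1}$ in the second. Under either of the hypothesized non-integrability conditions, letting $S\to+\infty$ forces $\int|\nabla\phi|^2\to 0$, hence $\int_{\{\beta<u<\alpha\}}|\nabla u|^2=0$ on all of $M$; letting $\beta\to\inf u$ and $\alpha\to 0$ then gives $\nabla u\equiv 0$ a.e. on $\{u<0\}$. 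A connectedness/continuity argument (the level set $\{u=0\}$ is then both open and closed in each component, using that $u$ is continuous and $W^{1,2}_{loc}$ with vanishing gradient on the open piece) finishes that $u$ is constant.

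The main obstacle I expect is \emph{not} the capacity computation — which is classical — but making sure the boundary $\partial M$ genuinely causes no trouble at two points: (i) that $r(x)=d(x,o)$ (the \emph{intrinsic} distance of the manifold with boundary) still satisfies $|\nabla r|=1$ a.e. and that the co-area formula applies with $\mathrm{Area}(\partial_0 B_R^M)$ as the relevant slice measure — this is addressed in the excerpt's discussion right before the theorem, so I would just cite it — and (ii) that the test function $\varphi=\phi^2(\text{truncation of }u)$, which does \emph{not} vanish on $\partial_1 M$, is admissible in \eqref{subsol}; this is precisely the content of the Remark allowing $0\le\varphi\in Lip_c(M)$, and it is exactly here that the Neumann condition $\partial u/\partial\nu\le 0$ is being used (in the smooth case it would contribute a boundary term $\int_{\partial M}\varphi\,\partial u/\partial\nu\le 0$ of the favorable sign, and in the weak formulation it is already built in). Once these two points are secured, the proof is the word-for-word transcription of Grigor'yan's volume-growth parabolicity test.
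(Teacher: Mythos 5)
Your overall strategy (a Caccioppoli-type inequality tested against a cutoff of small Dirichlet energy, followed by the annulus-capacity optimization) is a legitimate, classical route, and it is genuinely different from the proof the paper actually points to: there, Theorem \ref{th_areagrowth_meancurvop} with $\Phi\equiv1$ (see Remark \ref{rmk_areagrowth_meancurvop}) uses the exponential test function $\varphi=\rho\,\mathrm{e}^{u}$ for the normalized subsolution $u\le0$ together with a linear cutoff on an annulus of width $\varepsilon\to0$, producing the ordinary differential inequality $H'(R)/H(R)^{2}\ge\mathrm{Area}(\partial_{0}B_{R}(o))^{-1}$ for $H(R)=\int_{B_{R}}\mathrm{e}^{u}|\nabla u|^{2}(1+|\nabla u|^{2})^{-1/2}$, which is integrated against the non-integrability hypothesis; the volume hypothesis is reduced to the area one via the co-area formula. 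Your two boundary checkpoints (admissibility of test functions not vanishing on $\partial M$, and the Gauss lemma/co-area discussion for $r$) are exactly the right ones and are indeed already secured in the paper.

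However, the central estimate as you wrote it does not come out of the computation. With $u\le0$ a weak Neumann \emph{sub}solution and $\varphi=\phi^{2}w$, $w=(\alpha-u)^{+}\wedge(\alpha-\beta)$, the truncation $w$ is a \emph{non-increasing} function of $u$, so $\nabla w=-\mathbf{1}_{\{\beta<u<\alpha\}}\nabla u$ and \eqref{subsol} gives
\[
0\ \ge\ \int_{M}\langle\nabla u,\nabla\varphi\rangle\ =\ -\int_{\{\beta<u<\alpha\}}\phi^{2}|\nabla u|^{2}+2\int_{M}\phi\,w\,\langle\nabla u,\nabla\phi\rangle,
\]
i.e.\ the energy term \emph{dominates} the cross term; nothing can be absorbed and the claimed Caccioppoli bound does not follow. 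To make the scheme work you must pair the subsolution with a \emph{non-decreasing} truncation, and you must anchor the upper level at $\sup_{M}u$: if $\alpha<\sup_{M}u$, the cross term is supported on $\{u>\beta\}\supset\{u\ge\alpha\}$, where $|\nabla u|$ is uncontrolled, so the absorption fails there as well. The clean fix is the device the paper itself uses in (i)$\Rightarrow$(iii) of Theorem \ref{capacity_parabolicity}: set $v=\sup_{M}u-u+1\ge1$ and test with $\varphi=\phi^{2}v^{-1}$, which yields $\int_{D}|v^{-1}\nabla v|^{2}\le4\int|\nabla\phi|^{2}$; combined with your annulus-capacity estimates (and the standard reduction of the volume condition to the area condition) this completes the proof. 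So the gap is a fixable but real sign/monotonicity error sitting at the heart of the argument, not a defect of the overall plan.
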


It is a usual consequence of the co-area formula that the area growth
condition is weaker than the volume growth condition. On the other hand, the
volume growth condition is more stable with respect to (even rough)
perturbations of the metric and sometimes it characterizes the parabolicity of
the space. Therefore, both are important.

The first main result of the paper is the following maximum principle
characterization of parabolicity. It extends to manifolds with boundary a
classical result by L.V. Ahlfors.

\begin{theorem}
[Ahlfors maximum principle]\label{th_intro_Ahlfors}$M$ is parabolic if and
only the following maximum principle holds. For every domain $D\subseteq M$
with $\partial_{0}D\neq\emptyset$ and for every $u\in C^{0}\left(
\overline{D}\right)  \cap W_{loc}^{1,2}\left(  D\right)  $ satisfying, in the
weak Neumann sense,%
\[
\left\{
\begin{array}
[c]{ll}%
\Delta u\geq0 & \text{on }D\\
\dfrac{\partial u}{\partial\nu}\leq0 & \text{on }\partial_{1}D\\
\sup\limits_{D}u<+\infty, &
\end{array}
\right.
\]
it holds%
\[
\sup_{D}u=\sup_{\partial_{0}D}u.
\]

\end{theorem}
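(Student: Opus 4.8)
The plan is to prove the two implications separately. For the easy direction, assume the Ahlfors maximum principle holds and deduce parabolicity. Given $u\in C^0(M)\cap W^{1,2}_{loc}(M)$ bounded above with $\Delta u\ge 0$ weakly and $\partial u/\partial\nu\le 0$ on $\partial M$, I would fix an origin $o\in\mathrm{int}M$ and a small $\epsilon>0$ and apply the hypothesis to the domain $D=\{x\in M : u(x)>u(o)-\epsilon\}\setminus\{o\}$, or more simply argue by contradiction: if $u$ is non-constant, choose a regular sublevel set whose topological interior boundary $\partial_0 D$ is non-empty and on which $u$ is strictly less than $\sup_M u$, contradicting $\sup_D u=\sup_{\partial_0 D}u$. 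The point is that one engineers a domain $D$ with $\partial_0 D\ne\emptyset$ on which $u$ fails to attain its interior supremum on $\partial_0 D$; the Neumann condition on $\partial_1 D=\partial M\cap D$ is inherited from the global one.

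For the substantial direction, assume $M$ is parabolic and let $D$, $u$ be as in the statement with $s:=\sup_D u<+\infty$ and $\partial_0 D\ne\emptyset$. Write $a:=\sup_{\partial_0 D}u$ (allowing $a=-\infty$ a priori if $\partial_0 D$ is non-compact, but one reduces to $a\in\mathbb R$ by adding a constant) and suppose for contradiction that $a<s$. The idea is to extend $u$ to a global weak Neumann subsolution on $M$ by capping it off at the level $a$ outside $D$: define
\[
\tilde u=\begin{cases}\max(u,a) & \text{on } \overline D\\ a & \text{on } M\setminus D.\end{cases}
\]
Because $u\le a$ on $\partial_0 D$ and $u$ is continuous up to $\overline D$, the two pieces match continuously across $\partial_0 D$, so $\tilde u\in C^0(M)\cap W^{1,2}_{loc}(M)$; moreover $\tilde u$ is bounded above by $s$. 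The key verification is that $\tilde u$ is a weak Neumann subsolution on all of $M$: on $\mathrm{int}D$ it is the maximum of two subsolutions (a constant is harmonic), hence still a subsolution; on $M\setminus\overline D$ it is constant; and across the interface $\partial_0 D$ the gluing lemma for subharmonic functions (maximum of subsolutions) applies — here one uses that testing against $0\le\varphi\in C_c^\infty(M)$ splits as an integral over $D$ plus the fact that $\nabla\tilde u=0$ a.e. on $\{\tilde u=a\}$. The Neumann boundary contribution on $\partial_1 D$ comes from the hypothesis on $u$, and on $\partial M\setminus D$ it is trivial since $\tilde u$ is locally constant there; one should check the mixed portion near $\partial\Gamma=\partial_0 D\cap\partial M$ carefully. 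By parabolicity, $\tilde u\equiv\mathrm{const}=a$, whence $u\le a$ on $D$, contradicting $a<s=\sup_D u$. Therefore $\sup_D u=\sup_{\partial_0 D}u$.

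The main obstacle is the gluing step: verifying rigorously that the truncation $\tilde u$ is a \emph{global} weak Neumann subsolution, in particular handling the weak formulation of the Neumann condition across the corner set $\partial_0 D\cap\partial M$ and confirming that no spurious boundary term appears along $\partial_0 D$ when one integrates by parts against an arbitrary non-negative test function supported near the interface. This is exactly the kind of computation where the definition via $-\int\langle\nabla\tilde u,\nabla\varphi\rangle\ge 0$ must be used directly rather than through a pointwise PDE; the fact (noted in the Remark) that it suffices to test against $0\le\varphi\in Lip_c(M)$, together with the a.e. vanishing of $\nabla\tilde u$ on the capped region, makes this manageable. A secondary technical point is reducing to the case where $\partial_0 D$ is such that $a=\sup_{\partial_0 D}u$ is finite and attained-in-limit; if $\partial_0 D$ is non-compact one still has $a\in[-\infty,s)$ under the contradiction hypothesis, and adding the constant $-a$ (when finite) normalizes the picture, while the case $a=-\infty$ is handled by capping at any fixed level below $s$.
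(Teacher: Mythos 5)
Your overall strategy coincides with the paper's in both directions. For the ``maximum principle implies parabolicity'' half you test the principle on a superlevel set $\{u>\eta\}$ of a putative non-constant bounded subsolution, exactly as the paper does with $\Omega_\eta=\{v>\eta\}$; note that no regularity of the level set is needed (continuity of $u$ already gives $\partial\Omega_\eta\subseteq\{u=\eta\}$, and Sard-type arguments are unavailable for a function that is only $C^0\cap W^{1,2}_{loc}$), but you must choose $\eta$ so that $\Omega_\eta$ does not contain all of $\mathrm{int}\,M$, which is where non-constancy is used to guarantee $\partial_0\Omega_\eta\neq\emptyset$. For the converse you truncate $u$ and extend by a constant to all of $M$, then invoke the Liouville property; this is the paper's plan as well.

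The gap is in your choice of truncation level. You cap at $a=\sup_{\partial_0 D}u$ itself, so the set $\{u>a\}$ may accumulate on $\partial_0 D$ (at points where $u=a$ there), and the verification that $\tilde u$ is a global weak subsolution across $\partial_0 D$ becomes a genuine pasting lemma: the natural test functions $\varphi\min\bigl(1,(u-a)^{+}/\delta\bigr)$ are not compactly supported in $D$, hence not directly admissible in the weak formulation on $D$, and their admissibility would require a further approximation argument (or one proves the pasting lemma by first truncating at $a+\delta$ and letting $\delta\to0$). You correctly identify this as ``the main obstacle'' but do not resolve it. The paper avoids it entirely by truncating at a level $c$ strictly between $a$ and $s=\sup_D u$ (namely $c=\sup_D u-\varepsilon$): since $u$ is continuous on $\overline D$ and $u\le a<c$ on $\partial_0 D$, the closure of $\{u\ge c\}\cap D$ is disjoint from $\partial_0 D$, so the glued function is locally constant in a neighbourhood of $\partial_0 D$ and the subsolution property there is trivial. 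Parabolicity then forces the glued function to equal the constant $c$ (its value on the nonempty set $M\setminus D$), while its supremum is $\sup_D u>c$, a contradiction. With this one-line change your argument closes; as written, the crucial gluing step is asserted rather than proved.
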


It is worth to observe that, in case $D=M$, the Neumann boundary condition
plays no role and the result takes the following form which is crucial in the applications.

\begin{theorem}
\label{th_ahlfors-wholeM}Let $M$ be a parabolic manifold with boundary
$\partial M\neq\emptyset$. If $u\in C^{0}\left(  M\right)  \cap W_{loc}%
^{1,2}\left(  \mathrm{int}M\right)  $ satisfies%
\[
\left\{
\begin{array}
[c]{ll}%
\Delta u\geq0 & \text{on }\mathrm{int}M\\
\sup_{M}u<+\infty &
\end{array}
\right.
\]
then%
\[
\sup_{M}u=\sup_{\partial M}u.
\]

\end{theorem}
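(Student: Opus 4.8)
The plan is to deduce this statement from the general Ahlfors maximum principle, Theorem~\ref{th_intro_Ahlfors}. One cannot simply take $D=M$ there, since then $\partial_0 M=\emptyset$; instead the idea is to apply Theorem~\ref{th_intro_Ahlfors} to a suitable superlevel set of $u$, chosen so that it lies strictly inside $\mathrm{int}M$ and hence carries no Neumann boundary.

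First I would dispose of the trivial inequality: since $u\in C^0(M)$ we have $\sup_{\partial M}u\le\sup_M u$, so it suffices to rule out the strict inequality $\sup_{\partial M}u<\sup_M u$. Assuming it, fix a real number $\lambda$ with $\sup_{\partial M}u<\lambda<\sup_M u$ and set
\[
D=\{x\in M:u(x)>\lambda\}.
\]
Because $u\le\sup_{\partial M}u<\lambda$ on $\partial M$ and $u$ is continuous, every point of $\partial M$ has a neighbourhood on which $u<\lambda$; hence $\overline D$ is disjoint from $\partial M$, that is $\overline D\subseteq\mathrm{int}M$. Consequently $\partial_1 D=\partial M\cap D=\emptyset$ and $\partial_0 D=\partial D$. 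Moreover $D$ is non-empty (it contains points where $u$ is arbitrarily close to $\sup_M u>\lambda$) and is a proper subset of $M$ (indeed $D\subseteq\mathrm{int}M\subsetneq M$); since $M$ is connected this gives $\partial D\neq\emptyset$, i.e. $\partial_0 D\neq\emptyset$.

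Next I would check that $u$ satisfies on $D$ the hypotheses of Theorem~\ref{th_intro_Ahlfors}. Since $\overline D\subseteq\mathrm{int}M$ one has $u\in C^0(\overline D)\cap W_{loc}^{1,2}(D)$; the inequality $\Delta u\ge0$ holds weakly on $D\subseteq\mathrm{int}M$; the boundary inequality on $\partial_1 D$ is vacuous because $\partial_1 D=\emptyset$; and $\sup_D u=\sup_M u<+\infty$. Hence Theorem~\ref{th_intro_Ahlfors} yields $\sup_D u=\sup_{\partial_0 D}u$. On the other hand, if $p\in\partial_0 D=\partial D$ then $p\notin D$ forces $u(p)\le\lambda$, while $p$ is a limit of points of $D$, where $u>\lambda$, so $u(p)\ge\lambda$ by continuity; thus $u\equiv\lambda$ on $\partial_0 D$ and $\sup_{\partial_0 D}u=\lambda$. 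Combining, $\sup_M u=\sup_D u=\lambda<\sup_M u$, which is absurd, and therefore $\sup_M u=\sup_{\partial M}u$.

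The argument is short precisely because all the analytic content is already packaged in Theorem~\ref{th_intro_Ahlfors}; the only point needing care — and where parabolicity really enters, via that theorem — is to make sure the superlevel set $D$ is an admissible domain there, namely that $\overline D$ misses $\partial M$ and that $\partial_0 D\neq\emptyset$, which is exactly what the strict separation $\sup_{\partial M}u<\lambda<\sup_M u$ together with the continuity of $u$ up to the boundary provide. Alternatively, one may argue straight from Definition~\ref{def_parab}: for small $\varepsilon>0$ the truncation $w_\varepsilon=\big(u-\sup_{\partial M}u-\varepsilon\big)^+$ is continuous, bounded above, and vanishes near $\partial M$, so it lies in $W_{loc}^{1,2}(M)$ and, by a routine localization of its interior subharmonicity, is a weak Neumann subsolution on all of $M$; parabolicity then forces $w_\varepsilon\equiv0$, again contradicting $\sup_M u>\sup_{\partial M}u$.
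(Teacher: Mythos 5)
Your proof is correct. Your primary route is genuinely different from the paper's: you deduce the statement from Theorem~\ref{th_intro_Ahlfors} by applying it to the superlevel set $D=\{u>\lambda\}$ with $\sup_{\partial M}u<\lambda<\sup_M u$, checking that $\overline D\subseteq\mathrm{int}M$ (so the Neumann condition on $\partial_1 D$ is vacuous), that $\partial_0 D=\partial D\neq\emptyset$ by connectedness, and that $u\equiv\lambda$ on $\partial_0 D$, which collides with $\sup_D u=\sup_M u$. The paper instead argues directly from Definition~\ref{def_parab}: it truncates $u$ at the level $\sup_M u-\varepsilon$ on the set $\Omega_{2\varepsilon}=\{u>\sup_M u-2\varepsilon\}$ (whose closure lies in $\mathrm{int}M$ once $\varepsilon$ is small enough) and extends by the constant $\sup_M u-\varepsilon$ elsewhere, obtaining a bounded weak Neumann subsolution that is constant near $\partial M$, so that parabolicity forces it to be constant everywhere — a contradiction. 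Your sketched alternative via $w_\varepsilon=\bigl(u-\sup_{\partial M}u-\varepsilon\bigr)^{+}$ is essentially this same argument (the cut-off/partition-of-unity step you call ``routine localization'' is exactly the point the paper glosses with ``$u_\varepsilon$ is constant in a neighborhood of $\partial M$''). What your main route buys is brevity and a clean logical dependence on the already-established Ahlfors principle; what the paper's direct argument buys is independence from Theorem~\ref{th_intro_Ahlfors}, whose proof for general domains requires the extra care (the choice of $\varepsilon$ so that $\sup_D u-\varepsilon$ is not a local maximum) that your special case avoids. Both are valid; no gaps.
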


It is not surprising that this global maximum principle proves to be very
useful to get height estimates for constant mean curvature hypersurfaces in
product spaces. By way of example, we point out the following

\begin{theorem}
[Height estimate]\label{th_intro_hest} Let $N$ be a Riemannian manifold
without boundary and Ricci curvature satisfying $Ric_{N}\geq0$. Let $\Sigma$
be a complete, oriented hypersurface in $N\times\mathbb{R}$ with boundary
$\partial\Sigma\neq\emptyset$ and satisfying the following requirements:

\begin{enumerate}
\item[(i)] $\Sigma$ has quadratic intrinsic volume growth%
\begin{equation}
\mathrm{vol}B_{R}^{\Sigma}\left(  o\right)  =O\left(  R^{2}\right)  ,\text{ as
}R\rightarrow+\infty; \label{vg-hesimate}%
\end{equation}

\item[(ii)] $\partial\Sigma$ is contained in the slice $N\times\left\{
0\right\}  $;

\item[(iii)] For a suitable choice of the Gauss map $\mathcal{N}$ of $\Sigma$,
the hypersurface $\Sigma$ has constant mean curvature $H>0$ and the angle
$\Theta$ between $\mathcal{N}$ and the vertical vector field $\partial
/\partial t$ is contained in the interval $[\frac{\pi}{2},\frac{3\pi}{2}]$,
i.e.,%
\[
\cos\Theta=\left\langle \mathcal{N},\frac{\partial}{\partial t}\right\rangle
\leq0.
\]

\end{enumerate}

If $\Sigma$ is contained in a slab $N\times\lbrack-T,T]$ for some $T>0$, then%
\[
\Sigma\subseteq N\times\left[  0,\frac{1}{H}\right]  .
\]

\end{theorem}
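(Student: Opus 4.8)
The plan is to produce two functions on $\Sigma$ that are smooth subsolutions of $\Delta u=0$ and are bounded above, and to feed them into the Ahlfors maximum principle for $\Sigma$ (Theorem \ref{th_ahlfors-wholeM}); for this to be available one first checks, from the volume growth assumption, that $\Sigma$ is parabolic. Throughout, let $h=t|_\Sigma$ be the height function, i.e.\ the restriction to $\Sigma$ of the $\mathbb R$-projection of $N\times\mathbb R$, and let $\Theta$ be the angle function, so $\cos\Theta=\langle\mathcal N,\partial_t\rangle$; both are smooth on $\Sigma$, and by (ii) we have $h\equiv 0$ on $\partial\Sigma$. First I would observe that $\Sigma$ is parabolic: by (i) there are constants $C,R_0>0$ with $\operatorname{vol}B^\Sigma_R(o)\le CR^2$ for $R\ge R_0$, hence $R/\operatorname{vol}B^\Sigma_R(o)\ge 1/(CR)\notin L^1(+\infty)$, so Theorem \ref{th_growth} applies since $\Sigma$ is complete with $\partial\Sigma\ne\emptyset$.

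For the lower estimate $h\ge 0$ I would use the standard Weingarten-type identity $\Delta_\Sigma h=nH\cos\Theta$, where $n=\dim\Sigma$ and $nH=\operatorname{tr}A$ is the trace of the second fundamental form of $\Sigma$ with respect to $\mathcal N$. By assumption (iii) this gives $\Delta_\Sigma h\le 0$ on $\operatorname{int}\Sigma$, i.e.\ $-h$ is a smooth subsolution of $\Delta u=0$ there, and $-h\le T$ by the slab hypothesis. Since $h\equiv 0$ on $\partial\Sigma$, Theorem \ref{th_ahlfors-wholeM} yields $\sup_\Sigma(-h)=\sup_{\partial\Sigma}(-h)=0$, that is $h\ge 0$ on $\Sigma$.

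For the upper estimate $h\le 1/H$ the idea is to combine $h$ with the angle function. Since $H$ is constant, $\cos\Theta$ satisfies the Jacobi-type equation $\Delta_\Sigma\cos\Theta=-\bigl(|A|^2+\overline{\operatorname{Ric}}(\mathcal N,\mathcal N)\bigr)\cos\Theta$, where $\overline{\operatorname{Ric}}$ is the Ricci tensor of $N\times\mathbb R$; as the $\mathbb R$-factor is flat and $\operatorname{Ric}_N\ge 0$, one has $\overline{\operatorname{Ric}}(\mathcal N,\mathcal N)=\operatorname{Ric}_N(\mathcal N^{*},\mathcal N^{*})\ge 0$, $\mathcal N^{*}$ being the orthogonal projection of $\mathcal N$ onto $TN$. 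Setting $\psi:=Hh+\cos\Theta\in C^\infty(\Sigma)$ one then computes
\[
\Delta_\Sigma\psi=\bigl(nH^2-|A|^2-\overline{\operatorname{Ric}}(\mathcal N,\mathcal N)\bigr)\cos\Theta .
\]
By Cauchy--Schwarz $|A|^2\ge(\operatorname{tr}A)^2/n=nH^2$, so the term in parentheses is $\le 0$; combined with $\cos\Theta\le 0$ this gives $\Delta_\Sigma\psi\ge 0$ on $\operatorname{int}\Sigma$. Moreover $\psi\le HT$ by the slab hypothesis together with $\cos\Theta\le 0$, so $\psi$ is bounded above, and on $\partial\Sigma$ we have $h=0$, hence $\psi=\cos\Theta\le 0$ there. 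By Theorem \ref{th_ahlfors-wholeM}, $\sup_\Sigma\psi=\sup_{\partial\Sigma}\psi\le 0$, i.e.\ $Hh\le-\cos\Theta\le 1$, that is $h\le 1/H$.

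Combining the two estimates gives $0\le h\le 1/H$ on $\Sigma$, which is exactly $\Sigma\subseteq N\times[0,1/H]$. The hard part will be the two elliptic identities for $h$ and $\cos\Theta$ and the ensuing sign bookkeeping: once one verifies that $\Delta_\Sigma\psi\ge 0$ (using Cauchy--Schwarz and $\operatorname{Ric}_N\ge 0$) with the correct sign of $\psi$ on $\partial\Sigma$, the remainder is a direct application of Theorems \ref{th_growth} and \ref{th_ahlfors-wholeM}. It is also worth noting that the slab hypothesis enters only to guarantee that $-h$ and $\psi$ are bounded above, as required by the Ahlfors principle.
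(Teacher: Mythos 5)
Your proposal is correct and follows essentially the same route as the paper: parabolicity of $\Sigma$ from Theorem \ref{th_growth}, the minimum principle applied to the superharmonic height function $h$, and the Ahlfors maximum principle applied to $\varphi=Hh+\cos\Theta$. The only difference is that you carry out the computation showing $\Delta_\Sigma(Hh+\cos\Theta)\geq 0$ (via the Jacobi-type equation for $\cos\Theta$, Cauchy--Schwarz and $\mathrm{Ric}_N\geq 0$), where the paper simply cites Theorem 3.1 of Al\'{\i}as--Dajczer for the subharmonicity of this function.
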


We observe explicitly that (\ref{vg-hesimate}) can be replaced by the stronger
extrinsic condition%
\[
\mathrm{vol}\left(  B_{R}^{N}\left(  o\right)  \cap\Sigma\right)  =O\left(
R^{2}\right)  ,\text{ as }R\rightarrow+\infty,
\]
which, in turn, follows from the relation%
\[
B_{R}^{\Sigma}\left(  o\right)  \subseteq B_{R}^{N}\left(  o\right)
\cap\Sigma.
\]
We also note that there are important situations where the assumption on the
Gauss map is automatically satisfied and the volume growth condition on the
hypersurface is inherited from that of the ambient space. The following height
estimate extends previous results for $H$-graphs over non-compact domains
(\cite{He}, \cite{HoLiRo}, \cite{CheRo}, \cite{Sp}).

\begin{theorem}
[Height estimate for graphs]\label{th_intro_hest_graph}Let $\left(
N,g\right)  $ be a complete, Riemannian manifold without boundary satisfying
$Ric_{N}\geq0$ and%
\[
\mathrm{vol}B_{R}^{N}\left(  o\right)  =O\left(  R^{2}\right)  ,\text{ as
}R\rightarrow+\infty.
\]
Let $M\subset N$ be a closed domain with smooth boundary $\partial
M\neq\emptyset$. Suppose we are given a graph $\Sigma$ over $M$ with boundary
$\partial\Sigma\subset M\times\left\{  0\right\}  $ and constant mean
curvature $H>0$ with respect to the downward Gauss map. If $\Sigma$ is
contained in a slab, then%
\[
\Sigma\subseteq M\times\left[  0,\frac{1}{H}\right]  .
\]

\end{theorem}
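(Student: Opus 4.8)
The plan is to reduce Theorem \ref{th_intro_hest_graph} to the already-stated Theorem \ref{th_intro_hest} (the intrinsic height estimate) by checking that a graph $\Sigma$ over $M$, viewed as a hypersurface in $N \times \mathbb{R}$, satisfies hypotheses (i)--(iii) there. First I would set up the graph: write $\Sigma = \{(x, f(x)) : x \in M\}$ for a smooth function $f : M \to \mathbb{R}$, so $\Sigma$ is complete (being a closed graph over the closed domain $M$, with the induced metric dominating the pullback of $g$), oriented, and $\partial\Sigma = \{(x,f(x)) : x \in \partial M\}$. Hypothesis (ii), that $\partial\Sigma \subset N \times \{0\}$, is immediate from the assumption $\partial\Sigma \subset M \times \{0\}$, which forces $f \equiv 0$ on $\partial M$. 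The downward Gauss map here is $\mathcal{N} = (\nabla^N f, -1)/\sqrt{1 + |\nabla^N f|^2}$ (suitably normalized), so $\cos\Theta = \langle \mathcal{N}, \partial/\partial t\rangle = -1/\sqrt{1+|\nabla^N f|^2} < 0$, giving hypothesis (iii)'s angle condition; and the assumption that $\Sigma$ has constant mean curvature $H > 0$ with respect to this downward normal is exactly what is required.

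The one genuinely substantive point is hypothesis (i): the intrinsic volume growth $\mathrm{vol}\, B_R^\Sigma(o) = O(R^2)$. Here I would invoke the remark the authors make right after Theorem \ref{th_intro_hest}: because $\Sigma$ is a graph over $M \subseteq N$, the projection $\pi : N \times \mathbb{R} \to N$ restricts to a diffeomorphism $\Sigma \to M$ which is distance-non-increasing for the induced metric on $\Sigma$ (the graph metric is $g + df \otimes df \geq g$, so $\pi|_\Sigma$ contracts lengths of curves). Consequently $B_R^\Sigma(o) \subseteq \pi^{-1}(B_R^N(\pi(o))) \cap \Sigma$, hence $\mathrm{vol}\, B_R^\Sigma(o) \leq \mathrm{vol}(B_R^N(\pi(o)) \cap \Sigma)$, and the latter is bounded by the corresponding quantity for the ambient graph, which one controls by the hypothesis $\mathrm{vol}\, B_R^N(o) = O(R^2)$ together with the fact that $\Sigma$ sits in a slab $N \times [-T,T]$ — integrating the graph's volume element over $B_R^N(o)$ against the slab bound. (Alternatively, if one prefers to avoid the slab at this step, the extrinsic quadratic bound on $N$ plus a uniform gradient bound for $f$ — which itself follows a posteriori, but is not needed — would do; the cleanest route is the inclusion $B_R^\Sigma(o) \subseteq B_R^N(o) \cap \Sigma$ that the authors spell out.)

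With (i)--(iii) verified, Theorem \ref{th_intro_hest} applies verbatim and yields $\Sigma \subseteq N \times [0, 1/H]$; since $\Sigma$ lies over $M$, this is the same as $\Sigma \subseteq M \times [0, 1/H]$, which is the conclusion. I expect the main obstacle to be nothing deep but rather the bookkeeping in the volume comparison — making precise that the graph map is a contraction for the induced metric and that the slab confinement converts the ambient quadratic volume growth of $N$ into quadratic growth of $\mathrm{vol}(B_R^N(o) \cap \Sigma)$; everything else is a direct translation of hypotheses. It is also worth remarking that completeness of $\Sigma$ is used (via Theorem \ref{th_intro_hest}, which needs it to apply the parabolicity criterion of Theorem \ref{th_growth}), and this is guaranteed precisely because $M$ is taken to be a \emph{closed} domain in $N$ with $N$ complete.
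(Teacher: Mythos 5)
Your overall strategy (verify hypotheses (i)--(iii) of Theorem \ref{th_intro_hest} and apply it) is legitimate and is essentially the paper's own argument in disguise: the paper runs the Ahlfors maximum principle of Theorem \ref{th_ahlfors-wholeM} directly on $M_\Sigma$ with the function $w=Hu-\left(1+|\nabla u|^2\right)^{-1/2}$, which is exactly the function $\varphi=Hh+\cos\Theta$ of Theorem \ref{th_intro_hest} written in graphical coordinates. Your checks of (ii) and (iii) are correct.

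The genuine gap is in your verification of (i). After the (correct) reduction $\mathrm{vol}\,B_R^\Sigma(\bar p)\le \mathrm{vol}\bigl(B_R^N(\bar x)\cap\Sigma\bigr)=\int_{M\cap B_R^N(\bar x)}\sqrt{1+|\nabla u|^2}\,d\mathrm{vol}_N$, you claim this is controlled by ``integrating the graph's volume element over $B_R^N(o)$ against the slab bound.'' That is not an argument: the slab hypothesis bounds $|u|$, not $|\nabla u|$, and a bound on the height of a hypersurface over a region gives no bound on its area there. This is precisely where the paper invokes Lemma \ref{prop_height} (the Li--Wang device): one writes $\sqrt{1+|\nabla u|^2}=|\nabla u|^2(1+|\nabla u|^2)^{-1/2}+(1+|\nabla u|^2)^{-1/2}$ and bounds the first integral by applying the divergence theorem to $X=\rho\, u\,\nabla u\,(1+|\nabla u|^2)^{-1/2}$, using the mean curvature equation $\operatorname{div}\bigl(\nabla u\,(1+|\nabla u|^2)^{-1/2}\bigr)=-mH$ and the boundary condition $u=0$ on $\partial M$ (so that $X$ vanishes on $\partial M$); this converts $\sup|u|\cdot\sup|H|<+\infty$ into the estimate $\mathrm{vol}\,B_R^\Sigma(\bar p)\le C\,\mathrm{vol}\bigl(M\cap B^N_{(1+\delta)R}(\bar x)\bigr)\bigl(1+\tfrac{1}{\delta R}\bigr)$. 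So the CMC equation and the boundary condition enter the volume estimate in an essential way; it is not bookkeeping. Your fallback remark that a uniform gradient bound ``follows a posteriori'' is also unsupported --- such a bound would come from the Korevaar--Wang--Spruck interior gradient estimate (the paper's Lemma \ref{lemma_volume}), which needs a sectional curvature bound and a collar neighborhood of $M$, neither of which is assumed here. With Lemma \ref{prop_height} supplied, the rest of your reduction goes through.
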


In the particular case of graphs over a domain of a surface of non-negative
Gauss curvature we obtain the following result that extends to non-homogeneous
surfaces Theorem 4 in \cite{RoRo}.

\begin{corollary}
\label{coro_intro_hest_graph} Let $\left(  N,g\right)  $ be a complete
$2$-dimensional Riemannian manifold without boundary of non-negative Gauss
curvature. Let $M\subset N$ be a closed domain with smooth boundary $\partial
M\neq\emptyset$. Suppose we are given a graph $\Sigma$ over $M$ with boundary
$\partial\Sigma\subset M\times\left\{  0\right\}  $ and constant mean
curvature $H>0$ with respect to the downward Gauss map. Then%
\[
\Sigma\subseteq M\times\left[  0,\frac{1}{H}\right]  .
\]

\end{corollary}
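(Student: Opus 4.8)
The plan is to obtain the Corollary as a consequence of Theorem \ref{th_intro_hest_graph}. Comparing the two statements, two things need to be checked: that the curvature and volume growth hypotheses on $N$ required by Theorem \ref{th_intro_hest_graph} are automatic under the assumptions made here, and that the graph $\Sigma$ is automatically contained in a slab, so that the slab hypothesis of Theorem \ref{th_intro_hest_graph} may be dispensed with.

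The first point is immediate. In dimension two the Ricci tensor is $\mathrm{Ric}_N=K_N\,g$, so $K_N\ge 0$ is equivalent to $\mathrm{Ric}_N\ge 0$; and the Bishop--Gromov volume comparison theorem, applied to a complete surface with $\mathrm{Ric}_N\ge 0$, yields $\mathrm{vol}\,B^N_R(o)\le \pi R^2$, hence the required bound $\mathrm{vol}\,B^N_R(o)=O(R^2)$ as $R\to+\infty$. Thus both the curvature and the volume growth assumptions of Theorem \ref{th_intro_hest_graph} are in force.

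The substance of the proof is therefore the a priori estimate that $\Sigma$ lies in some slab $N\times[a,b]$; granted this, Theorem \ref{th_intro_hest_graph} applies with no change and delivers the sharp conclusion $\Sigma\subseteq M\times[0,1/H]$. Writing $\Sigma$ as the graph of $u\in C^\infty(M)$ with $u|_{\partial M}=0$, the choice of the downward Gauss map makes $u$ a supersolution of the mean curvature operator, equivalently the height function $h$ on $\Sigma$ satisfies $\Delta_\Sigma h\le 0$, while the angle function $\nu=\langle\mathcal{N},\partial_t\rangle\in[-1,0)$ is, because $H$ is constant and $\mathrm{Ric}_N\ge 0$, a bounded subharmonic function on $\Sigma$; I record as well that $M$, being a closed domain in the complete surface $N$, is complete in its intrinsic distance and satisfies $\mathrm{vol}\,B^M_R(o)\le\mathrm{vol}\,B^N_R(o)=O(R^2)$, hence is itself a parabolic manifold with boundary by Theorem \ref{th_growth}. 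To turn these ingredients into the two-sided bound $0\le u\le b$ I would compare $\Sigma$, via the geometric tangency principle for CMC hypersurfaces, with the rotationally invariant CMC-$H$ hypersurfaces of $N\times\mathbb{R}$ — the analogues of the spherical caps of radius $1/H$ — by sliding such a cap onto $\Sigma$ from above, and respectively from below; the hypothesis $K_N\ge 0$ enters here through a Heintze--Karcher type control of the geodesic spheres of $N$, ensuring that the comparison caps are defined over balls of the expected size and have height at most $1/H$.

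The main obstacle is precisely this a priori height bound. The reduction to Theorem \ref{th_intro_hest_graph} and the volume-growth bookkeeping are soft; by contrast, the sliding-cap comparison is the delicate part, mainly because one has to exclude the degenerate configuration in which it is the \emph{boundary} of the sliding cap, rather than an interior point, that first meets $\Sigma$ — the usual subtlety of Serrin-type barrier arguments.
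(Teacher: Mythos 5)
Your reduction to Theorem \ref{th_intro_hest_graph} is exactly the one the paper intends, and the soft part is handled correctly: in dimension two $\mathrm{Ric}_N=K_N\,g$, Bishop--Gromov gives $\mathrm{vol}\,B_R^N(o)\le\omega_2R^2$, and $M$ inherits completeness and quadratic volume growth from $N$. The genuine gap is the step you yourself flag as the ``main obstacle'': the a priori containment of $\Sigma$ in a slab. That is the entire content of the corollary beyond Theorem \ref{th_intro_hest_graph}, and your proposal does not prove it --- it only sketches a sliding-cap tangency scheme. That scheme has concrete unresolved problems here: (i) to slide a comparison cap onto $\Sigma$ ``from above'' you must start above $\Sigma$, which presupposes $\sup_M u<+\infty$, the very thing to be shown; (ii) since $M$ is non-compact, the first point of contact in a sliding argument can escape to infinity, so the interior tangency principle cannot be invoked without an additional properness or compactness ingredient; and (iii) the rotationally invariant CMC-$H$ caps over geodesic balls of a general surface with $K_N\ge0$ are not literal spherical caps and would have to be constructed and their height estimated --- your appeal to ``Heintze--Karcher type control'' is a placeholder rather than an argument.

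The paper closes this gap by a different mechanism, which you should be aware of: a graph is a stable CMC hypersurface, and by the results cited as \cite{RoRo}, \cite{Ch} and \cite{ENR}, a stable $H$-hypersurface with $|H|>0$ and dimension $m=2,3,4$ in an ambient manifold with $\mathrm{Sec}\ge0$ satisfies a universal bound on the intrinsic distance from any point to its boundary, of the order $C/|H|$. Since the height function is $1$-Lipschitz on $\Sigma$ and vanishes on $\partial\Sigma\subset M\times\{0\}$, this immediately gives $\sup_M|u|\le C/H$, i.e.\ the slab condition, after which Theorem \ref{th_intro_hest_graph} applies verbatim. To repair your proof you should either import these stability estimates or actually carry out a barrier construction addressing points (i)--(iii); as written, the corollary is not established.
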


In the setting of manifolds without boundary, it is well known from a
classical work by T. Lyons and D. Sullivan \cite{LS} that the validity of an
$L^{2}$-divergence theorem is related, and in fact equivalent, to the
parabolicity of the space. We shall complete the picture by extending the
$L^{2}$-divergence theorem to non-compact manifolds with boundary.

\begin{theorem}
[$L^{2}$-divergence theorem]\label{th_intro_Stokes}Let $M$ be a parabolic
Riemannian manifold with boundary $\partial M\neq\emptyset$ and outward
pointing unit normal $\nu$. Then $M$ is parabolic if and only if the following
holds. Let $X$ be a vector field on $M$ satisfying the following conditions:%
\begin{align}
&  \text{(a) }\left\vert X\right\vert \in L^{2}\left(  M\right) \label{KNR1}\\
&  \text{(b) }\left\langle X,\nu\right\rangle \in L^{1}\left(  \partial
M\right) \nonumber\\
&  \text{(c) }\operatorname{div}X\in L_{loc}^{1}(M),\ \left(
\operatorname{div}X\right)  _{-}\in L^{1}\left(  M\right)  .\nonumber
\end{align}
Then%
\[
\int_{M}\operatorname{div}X=\int_{\partial M}\left\langle X,\nu\right\rangle
.
\]

\end{theorem}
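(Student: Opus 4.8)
The plan is to establish both implications by combining two ingredients: the divergence theorem on precompact smooth domains, in the weak form of Definition~\ref{def_weak-sol-divX} and Lemma~\ref{lemma_equiv-weak-def}, and the capacitary description of parabolicity, namely the fact that $M$ is parabolic if and only if every compact set has vanishing Neumann capacity. Thus the first thing I would do is record the relevant potential-theoretic picture. Fix a smooth precompact exhaustion $\{D_k\}$ of $M$ with $\overline{D_k}\subset D_{k+1}$, and for $k\ge2$ let $h_k$ solve the mixed problem $\Delta h_k=0$ on $D_k\setminus\overline{D_1}$, $h_k=1$ on $\partial_0D_1$, $h_k=0$ on $\partial_0D_k$, $\partial h_k/\partial\nu=0$ on $\partial_1(D_k\setminus\overline{D_1})$; extend $h_k$ by $1$ on $D_1$ and by $0$ outside $D_k$. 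Then $h_k\in\mathrm{Lip}_c(M)$, $0\le h_k\le1$, $h_k$ is non-decreasing in $k$ and converges to a harmonic function $h$ with $0\le h\le1$, $h=1$ on $\partial_0D_1$, $\partial h/\partial\nu=0$ on $\partial M\setminus\overline{D_1}$, and $\int_M|\nabla h_k|^2=\mathrm{cap}_k(D_1)\to\int_{M\setminus\overline{D_1}}|\nabla h|^2=\mathrm{cap}(D_1)$; moreover $\mathrm{cap}(D_1)=0$ exactly when $h\equiv1$, and this happens if and only if $M$ is parabolic.

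For the implication ``$M$ parabolic $\Rightarrow$ divergence identity'', given $X$ satisfying (a)--(c) I would use the $h_k$ as test fields. By the Leibniz rule for weak divergences (legitimate since $h_k$ is Lipschitz, $X\in L^2_{loc}\subset L^1_{loc}$ and $\operatorname{div}X\in L^1_{loc}$) one has $\operatorname{div}(h_kX)=h_k\operatorname{div}X+\langle\nabla h_k,X\rangle$, a field with compact support; applying the precompact divergence theorem to $h_kX$ and using that $h_k$ vanishes near $\partial_0D_{k+1}$ and off $D_k$ gives
\[
\int_M h_k\operatorname{div}X+\int_M\langle\nabla h_k,X\rangle=\int_{\partial M}h_k\langle X,\nu\rangle .
\]
Then I would pass to the limit $k\to\infty$: the middle term tends to $0$ by Cauchy--Schwarz since $\|\nabla h_k\|_{L^2}^2=\mathrm{cap}_k(D_1)\to0$ by parabolicity; the right-hand side tends to $\int_{\partial M}\langle X,\nu\rangle$ by dominated convergence using (b); and, since $h_k\nearrow1$, $\int_M h_k(\operatorname{div}X)_-\to\int_M(\operatorname{div}X)_-<\infty$ by monotone convergence and (c), after which the displayed identity itself shows $\sup_k\int_M h_k(\operatorname{div}X)_+<\infty$, so $\operatorname{div}X\in L^1(M)$ and $\int_M h_k\operatorname{div}X\to\int_M\operatorname{div}X$. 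In the limit the identity becomes the asserted one.

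For the converse I would argue by contradiction. If the stated identity holds for all admissible $X$ but $M$ is not parabolic, then $\mathrm{cap}(D_1)>0$, so $h\not\equiv1$ is a bounded finite-energy harmonic function on $M\setminus\overline{D_1}$ with zero Neumann data. Pick $\chi\in C^\infty(M)$ with $0\le\chi\le1$, $\chi\equiv0$ near $\overline{D_1}$ and $\chi\equiv1$ on $M\setminus D_2$, and set $X=\chi\nabla h$, extended by $0$ across $\overline{D_1}$; by elliptic regularity up to $\partial M$ this is a smooth field. Then $|X|\le|\nabla h|\in L^2(M)$, while $\langle X,\nu\rangle=\chi\,\partial h/\partial\nu\equiv0$ on $\partial M$ and $\operatorname{div}X=\langle\nabla\chi,\nabla h\rangle$ is smooth with support in $\overline{D_2}\setminus D_1$, so $X$ is admissible. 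On the other hand, Green's formula on the annulus $D_2\setminus\overline{D_1}$ — applied first to the pair $\chi,h$ and then to $h$ alone, and using $\Delta h=0$, $\chi=0$ on $\partial_0D_1$, $\chi=1$ on $\partial_0D_2$, $\partial h/\partial\nu=0$ on $\partial_1$ — yields (with $\nu$ the exterior normal of the annulus) $\int_M\operatorname{div}X=\int_{\partial_0D_2}\partial h/\partial\nu=-\int_{\partial_0D_1}\partial h/\partial\nu=-\mathrm{cap}(D_1)\ne0$, whereas $\int_{\partial M}\langle X,\nu\rangle=0$. This contradiction proves that $M$ must be parabolic.

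The Leibniz rule and the various limit passages are routine; the step I expect to be the main obstacle is the potential theory recorded in the first paragraph — the equivalence between parabolicity in the sense of Definition~\ref{def_parab} and the vanishing of the Neumann capacity of a compact set, and the construction, regularity up to $\partial M$, monotonicity and flux/energy identities for the equilibrium potentials $h_k$ and their limit $h$. I would either quote this from \cite{Gr1} and the appendix, or establish it by applying the Ahlfors weak maximum principle of Theorem~\ref{th_intro_Ahlfors} on the annular domains $D_k\setminus\overline{D_1}$ to compare the $h_k$ with one another and with the constant function $1$.
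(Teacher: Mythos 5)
Your proof of the forward implication is essentially the paper's own argument: both test the identity against the compactly supported cut-offs produced by the capacitary characterization of parabolicity (Theorem \ref{capacity_parabolicity}(ii), i.e.\ the equilibrium potentials of Proposition \ref{equilibrium_potentials2}), kill the cross term $\int_M\langle\nabla h_k,X\rangle$ by Cauchy--Schwarz and the vanishing of the energies, and use monotone convergence on $(\operatorname{div}X)_-$ to bootstrap $(\operatorname{div}X)_+\in L^1$. The converse is where you genuinely diverge. The paper assumes $M$ non-parabolic, invokes the existence of a finite positive Neumann Green kernel \cite{Gr1}, sets $u=\int G(\cdot,y)f(y)\,dy$ for a bump $0\le f\in C_c^\infty(\mathrm{int}\,M)$, and takes $X=\nabla u$, so that $\int_M\operatorname{div}X=-\int_M f\neq 0$ while the boundary flux vanishes. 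You instead reuse the non-constant limiting equilibrium potential $h$ of a compact set of positive capacity and take $X=\chi\nabla h$ with a cut-off $\chi$; the failure of the identity then reduces to the strict positivity of the flux $\int_{\partial_0 D_1}\partial h/\partial\nu$, which you identify with $\mathrm{cap}(\overline{D}_1)$ (and which in any case is positive pointwise by the Hopf boundary point lemma, since $h<1$ off $\overline{D}_1$). Your route is more self-contained, as it quotes only Proposition \ref{equilibrium_potentials2} rather than the existence theory for the Neumann Green function; the small price is that you must justify passing the flux--energy identity $\int_{\partial_0 D_1}\partial h_k/\partial\nu=\mathrm{cap}(\overline{D}_1,D_k)$ to the limit (which follows from the $C^2_{loc}$ convergence $h_k\to h$ near $\partial_0 D_1$ established in that proposition), and, trivially, you should choose $D_1$ to contain a compact set of positive capacity rather than fixing it in advance. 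Both converses are correct; the paper's produces a counterexample with $\operatorname{div}X\le 0$ of one sign, yours one with compactly supported divergence.
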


A weaker version of the $L^{2}$-divergence theorem, involving solutions $X$ of
inequalities of the type $\operatorname{div}X\geq f$ with boundary conditions
$\left\langle X,\nu\right\rangle \leq0$, will be employed in our
investigations on hypersurfaces in product spaces; see Proposition
\ref{propineq}. In particular, from this latter we shall obtain the following
result for hypersurfaces contained in a half-space of $N\times\mathbb{R}$.

\begin{theorem}
[Slice theorem]\label{th_intro_slice} Let $N$ be a Riemannian manifold without
boundary. Let $\Sigma\subset N\times\lbrack0,+\infty)$ be a complete, oriented
hypersurface with boundary $\partial\Sigma\neq\emptyset$ contained in the
slice $N\times\left\{  0\right\}  $ and satisfying the volume growth condition%
\[
\mathrm{vol}B_{R}^{\Sigma}\left(  o\right)  =O\left(  R^{2}\right)  ,\text{ as
}R\rightarrow+\infty.
\]
Assume that, for a suitable choice of the Gauss map $\mathcal{N}$ of $\Sigma$,
the hypersurface $\Sigma$ has non-positive mean curvature $H\left(  x\right)
\leq0$ and the angle $\Theta$ between $\mathcal{N}$ and the vertical vector
field $\partial/\partial t$ is contained in the interval $[\frac{\pi}{2}%
,\frac{3\pi}{2}]$, i.e.,%
\[
\cos\Theta=\left\langle \mathcal{N},\frac{\partial}{\partial t}\right\rangle
\leq0.
\]
If there exists some half-space $N\times\lbrack t,+\infty)$ of $N\times
\mathbb{R}$ such that%
\[
\mathrm{vol}\left(  \Sigma\cap N\times\lbrack t,+\infty)\right)  <+\infty,
\]
then $\Sigma\subset N\times\left\{  0\right\}  $.
\end{theorem}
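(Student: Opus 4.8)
The plan is to show that the \emph{height function} $h:\Sigma\to\mathbb R$, that is, the restriction to $\Sigma$ of the projection $N\times\mathbb R\to\mathbb R$, is a bounded subharmonic function on the parabolic manifold $\Sigma$, and then to invoke the Ahlfors maximum principle. First, the hypotheses force parabolicity: since $\mathrm{vol}B^{\Sigma}_R(o)=O(R^{2})$ we have $R/\mathrm{vol}B^{\Sigma}_R(o)\ge c/R\notin L^{1}(+\infty)$, so $\Sigma$ is parabolic by Theorem~\ref{th_growth}. Next, because $\partial/\partial t$ is parallel on the product, $h$ satisfies the classical identity $\Delta_{\Sigma}h=nH\langle\mathcal N,\partial/\partial t\rangle$ ($n=\dim\Sigma$), so the sign conditions $H\le0$ and $\langle\mathcal N,\partial/\partial t\rangle=\cos\Theta\le0$ give $\Delta_{\Sigma}h\ge0$ on $\mathrm{int}\,\Sigma$; moreover $\nabla^{\Sigma}h=(\partial/\partial t)^{\top}$, so $|\nabla^{\Sigma}h|\le1$. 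Finally, $h\ge0$ on $\Sigma$ (as $\Sigma\subset N\times[0,+\infty)$) and $h\equiv0$ on $\partial\Sigma$. We assume, as we may, that $\Sigma$ is connected.

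If we knew a priori that $\sup_{\Sigma}h<+\infty$, we would be done at once: Theorem~\ref{th_ahlfors-wholeM} applied to $u=h$ gives $\sup_{\Sigma}h=\sup_{\partial\Sigma}h=0$, whence $h\equiv0$ and $\Sigma\subset N\times\{0\}$. So the whole point is to prove that $h$ is bounded above, and this is where the finiteness of the volume of a half‑space enters. Fix $t>0$ with $\mathrm{vol}\{h\ge t\}=\mathrm{vol}\big(\Sigma\cap(N\times[t,+\infty))\big)<+\infty$ (any sufficiently large $t$ will do), and set $u=(h-t)_{+}$. As the positive part of the subharmonic function $h-t$, $u$ is a nonnegative Lipschitz weak subsolution of $\Delta u\ge0$ on $\mathrm{int}\,\Sigma$ which vanishes on $\{h\le t\}$, in particular on a neighbourhood of $\partial\Sigma$.

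Choose a smooth increasing $\varphi:[0,+\infty)\to[0,1)$ with $\varphi(0)=0$ and $\varphi'>0$ (for instance $\varphi(s)=1-e^{-s}$), and put $X=\varphi(u)\nabla u$. Then $|X|\le\varphi(u)\le1$ and $X$ is supported in $\{h\ge t\}$, so $|X|\in L^{2}(\Sigma)$ by the finite volume hypothesis; $X\equiv0$ near $\partial\Sigma$, so $\langle X,\nu\rangle\equiv0\in L^{1}(\partial\Sigma)$; and, testing the weak subsolution inequality for $u$ against the admissible function $0\le\varphi(u)\psi$ and applying the chain rule, one obtains the distributional inequality $\operatorname{div}X\ge\varphi'(u)|\nabla u|^{2}\ge0$. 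Since $\varphi(u)$ vanishes on the level set $\{h=t\}$, the term $\varphi(u)\Delta u$ carries no singular part along it, so in fact $\operatorname{div}X\in L^{1}_{loc}(\Sigma)$ and $(\operatorname{div}X)_{-}=0\in L^{1}(\Sigma)$. Therefore $X$ and $f=\varphi'(u)|\nabla u|^{2}$ (nonnegative, bounded and supported in $\{h\ge t\}$, hence in $L^{1}(\Sigma)$) satisfy the hypotheses of Proposition~\ref{propineq} on the parabolic manifold $\Sigma$, and we conclude
\[
\int_{\Sigma}\varphi'(u)\,|\nabla u|^{2}\le\int_{\partial\Sigma}\langle X,\nu\rangle=0 .
\]
Since $\varphi'>0$ this forces $\nabla u\equiv0$ a.e. on the open set $\{u>0\}=\{h>t\}$, so $h$ is constant on each connected component of $\{h>t\}$. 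Every such component is seen to be both open and closed in $\Sigma$ (the limit of a sequence along which $h\equiv c>t$ still has $h=c>t$), so by connectedness $\{h>t\}$ is either empty or all of $\Sigma$; the latter is impossible because $h=0$ on $\partial\Sigma\ne\emptyset$. Hence $h\le t$ on $\Sigma$, and the reduction of the previous paragraph finishes the proof.

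The only genuinely delicate step is the weak manipulation with the merely Lipschitz function $u=(h-t)_{+}$: one must verify that $u$ is again a weak subsolution, that $\varphi(u)\psi$ is a legitimate test function and the chain rule applies, and — crucially — that multiplying $\nabla u$ by the factor $\varphi(u)$, which vanishes precisely on the bad level set $\{h=t\}$, removes the surface term so that $\operatorname{div}X$ is an honest $L^{1}_{loc}$ function (this, together with the requirement $|X|\in L^{2}$, is why one works with $\varphi(u)\nabla u$ rather than with $\nabla u$ itself). Everything else is a routine check of the hypotheses of Proposition~\ref{propineq}, made possible by using the finite‑volume assumption to localize the argument to $\{h\ge t\}$.
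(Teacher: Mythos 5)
Your proof is correct, and it shares the paper's overall skeleton --- reduce, via the Ahlfors principle on the parabolic manifold $\Sigma$, to showing that the height function $h$ is bounded above, and extract that bound from the finite-volume hypothesis through Proposition \ref{propineq} --- but the key step is implemented quite differently. The paper argues by contradiction assuming $\sup_\Sigma h=+\infty$: it picks a regular value $t$ of $h$ (Sard's theorem), passes to the superlevel set $\Sigma_t=\{h\ge t\}$, which is a complete manifold with boundary that is itself parabolic because it has finite volume, and applies Proposition \ref{propineq} there to $X=\nabla h_\varepsilon$ with $h_\varepsilon=\max\{h,t+\varepsilon\}$ (the truncation forces $\langle X,\nu_t\rangle=0$ on $\partial\Sigma_t=\{h=t\}$); it first concludes that $h_\varepsilon$ is harmonic and then invokes the Liouville property of $\Sigma_t$ to make it constant, letting $\varepsilon\to0$. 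You instead stay on $\Sigma$ itself (parabolic by the quadratic volume growth, as you note) and damp the gradient of $u=(h-t)_+$ by the bounded factor $\varphi(u)$, which simultaneously makes $|X|$ bounded and supported in the finite-volume region (hence $L^2$), kills the boundary contribution, and produces the nonnegative source term $f=\varphi'(u)|\nabla u|^2$; Proposition \ref{propineq} then yields $\int_\Sigma\varphi'(u)|\nabla u|^2\le 0$ in one stroke, and the clopen argument replaces the paper's $\varepsilon\to0$ limit. Your route avoids Sard's theorem and the separate parabolicity of $\Sigma_t$; the paper's route avoids your weak-formulation bookkeeping with the merely Lipschitz function $(h-t)_+$. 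Two minor remarks: Proposition \ref{propineq} only requires the distributional inequality $\operatorname{div}X\ge f$, so your worry about whether $\operatorname{div}X$ is an honest $L^1_{loc}$ function is superfluous (though harmless); and your explicit connectedness assumption is in fact also implicit in the paper's proof (each component of $\Sigma$ must meet $\partial\Sigma$ for either argument to close), so it is a hypothesis on the theorem rather than a gap in your argument.
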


In case $\Sigma$ is given graphically over a parabolic manifold $M$, we shall
obtain the following variant of the slice theorem that involves the volumes of
orthogonal projections of $\Sigma$ on $M.$ Its proof \ requires a
Liouville-type theorem for the mean curvature operator under volume growth
conditions; see Theorem \ref{th_areagrowth_meancurvop}.

\begin{theorem}
[Slice theorem for graphs]\label{th_intro_slice_graphs}Let $M$ be a complete
manifold with boundary $\partial M\neq\emptyset$, outward pointing unit normal
$\nu$, and (at most) quadratic volume growth, i.e.,%
\[
\mathrm{vol}B_{R}^{M}\left(  o\right)  =O\left(  R^{2}\right)  ,\text{ as
}R\rightarrow+\infty,
\]
for some origin $o\in M$. Let $\Sigma$ be a graph over $M$ with non-positive
mean curvature $H\left(  x\right)  \leq0$ with respect to the orientation
given by the downward pointing Gauss map $\mathcal{N}\left(  x\right)  $.
Assume that $\partial\Sigma\cap M\times\left\{  T\right\}  =\emptyset$ for
some $T>0$ and that at least one of the following conditions is satisfied:

\begin{enumerate}
\item[(a)] $\partial\Sigma=\partial M\times\left\{  0\right\}  $ and
$\Sigma\subset M\times\lbrack0,+\infty).$

\item[(b)] $M$ and $\Sigma$ are real analytic.

\item[(c)] On $\partial\Sigma$, the Gauss map $\mathcal{N}\left(  x\right)  $
of $\Sigma$ and the Gauss map $\mathcal{N}_{0}\left(  x\right)  =\left(
-\nu\left(  x\right)  ,0\right)  $ of the boundary $\partial M\times\left\{
t\right\}  $ of any slice form an angle $\theta\left(  x\right)  \in
\lbrack-\frac{\pi}{2},\frac{\pi}{2}]$.
\end{enumerate}

\noindent If the portion of the graph $\Sigma$ contained in some half-space
$M\times\lbrack t,+\infty)$ has finite volume projection on the slice
$M\times\left\{  0\right\}  $, then $\Sigma$ is a horizontal slice of
$M\times\mathbb{R}$.
\end{theorem}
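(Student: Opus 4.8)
The plan is to reduce the statement, via the graph structure, to a Liouville-type result for the mean curvature operator under the quadratic volume growth hypothesis, in the spirit of Theorem \ref{th_areagrowth_meancurvop}. Write $\Sigma$ as the graph of a function $u:M\to\mathbb{R}$, so that the choice of the downward pointing Gauss map translates the mean curvature equation into the inequality
\[
\operatorname{div}\left(\frac{\nabla u}{\sqrt{1+|\nabla u|^{2}}}\right)=mH(x)\leq 0\quad\text{on }\operatorname{int}M,
\]
i.e. $u$ is a (weak) supersolution of the mean curvature operator on $\operatorname{int}M$. The goal is to prove that $u$ is constant; by the slab/half-space bookkeeping this forces $\Sigma=M\times\{c\}$. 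First I would record that the projected-volume hypothesis, $\operatorname{vol}\left(\{x\in M: u(x)\geq t\}\right)<+\infty$, together with $\operatorname{vol}B_R^{M}(o)=O(R^{2})$, says that $u$ is bounded above on a set of finite measure outside a compact set, while below the level $t$ we still have the ambient quadratic growth; combined these will be used to run the Kelvin–Nevanlinna–Royden / $L^{2}$-divergence machinery (Theorem \ref{th_intro_Stokes} and its inequality version, Proposition \ref{propineq}) against the vector field $X=\nabla u/\sqrt{1+|\nabla u|^{2}}$, which has $|X|\leq 1$, hence $|X|\in L^{2}$ on any finite-measure region.

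The central step is the boundary term. Applying the divergence inequality on $M$ to $X=\nabla u/\sqrt{1+|\nabla u|^{2}}$, one gets
\[
\int_{M}mH(x)=\int_{M}\operatorname{div}X=\int_{\partial M}\langle X,\nu\rangle,
\]
provided the hypotheses (a), (b), or (c) guarantee $\langle X,\nu\rangle\leq 0$ (or is controlled) along $\partial M$. This is exactly what the three alternative conditions are for: in case (a), $\partial\Sigma=\partial M\times\{0\}$ with $\Sigma$ above the zero slice forces $u\geq 0=u|_{\partial M}$, so the outward normal derivative $\partial u/\partial\nu\leq 0$ and hence $\langle X,\nu\rangle\leq 0$; in case (c), the angle condition between $\mathcal{N}$ and $\mathcal{N}_{0}=(-\nu,0)$ on $\partial\Sigma$ is precisely the geometric statement that $\langle X,\nu\rangle\leq 0$; and in case (b), real analyticity lets one invoke a unique-continuation / nodal-set argument (or analytic continuation across $\partial\Sigma$, reflecting) to again pin down the sign of the boundary flux, or to directly conclude $\nabla u\equiv 0$. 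With $\langle X,\nu\rangle\leq 0$ and $H\leq 0$, both sides of the identity have fixed sign, forcing $\operatorname{div}X\equiv 0$ and $\langle X,\nu\rangle\equiv 0$; revisiting the proof of the divergence theorem (the Cauchy–Schwarz step that produced the inequality) one extracts $\nabla u\equiv 0$ on a cofinite-measure set, and then on all of $M$ by connectedness plus the equation being (degenerate) elliptic.

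The main obstacle I anticipate is making the cutoff argument work with only the \emph{projected} finite-volume condition rather than a genuine finite-volume or volume-growth condition on $\Sigma$ itself: one must choose test functions $\varphi$ on $M$ (not on $\Sigma$) adapted to the metric $g$ of $M$, exploit $\operatorname{vol}B_R^{M}(o)=O(R^{2})$ to get $\int|\nabla\varphi|^{2}\to 0$ along a suitable sequence (the standard logarithmic cutoff, legitimate precisely under quadratic growth), and then show that the error terms $\int_{\{u\geq t\}}|X||\nabla\varphi|$ and the corresponding contribution from $\{u<t\}$ both vanish — the former because of the finite-measure hypothesis and $|X|\leq 1$, the latter because there $X=\nabla u/\sqrt{1+|\nabla u|^{2}}$ is still bounded and the cutoff gradient is $L^{2}$-small. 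A secondary subtlety is the real-analytic case (b): here one cannot argue purely by integration and must instead combine the local real-analyticity of solutions of the (analytic, elliptic off the critical set) mean curvature equation with the slab constraint and $\partial\Sigma\cap M\times\{T\}=\emptyset$ to propagate the conclusion $u\equiv\text{const}$ from a neighborhood of $\partial M$ — or alternatively to reduce (b) to (c) by showing the analytic graph meets $\partial M\times\{0\}$ transversally with the required angle. Once $u$ is constant, identifying which slice it is (namely $M\times\{0\}$ under (a), and the appropriate constant otherwise) is immediate from the boundary data and the hypothesis $\partial\Sigma\cap M\times\{T\}=\emptyset$.
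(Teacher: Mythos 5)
Your overall strategy --- apply the $L^{2}$-divergence machinery (Proposition \ref{propineq}) directly on all of $M$ to the field $X=\nabla u/\sqrt{1+|\nabla u|^{2}}$ and use (a), (b), (c) only to fix the sign of the boundary flux --- has a genuine gap at its central step. Proposition \ref{propineq} (and Theorem \ref{th_intro_Stokes}) requires $|X|\in L^{2}(M)$, and the bound $|X|\leq 1$ only gives this when the region of integration has finite volume. The hypothesis provides finite volume only for the superlevel set $\{u\geq t\}$; on $\{u<t\}$ the manifold may well have infinite volume, and quadratic volume growth is not enough to kill the error term you propose to control. Concretely, with the logarithmic cutoff legitimate under quadratic growth one has $\|\nabla\varphi\|_{L^{2}}\to 0$, but
\[
\int_{\{u<t\}}|X|\,|\nabla\varphi|\;\leq\;\|\nabla\varphi\|_{L^{2}}\,
\mathrm{vol}\bigl(\mathrm{supp}\,\nabla\varphi\bigr)^{1/2}\;\sim\;\frac{R}{\sqrt{\log R}}\;\longrightarrow\;+\infty,
\]
so this contribution does not vanish and the global identity $\int_{M}mH=\int_{\partial M}\langle X,\nu\rangle$ cannot be extracted this way. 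A second, smaller gap: even granting $\operatorname{div}X\equiv 0$ and $\langle X,\nu\rangle\equiv 0$, passing to $\nabla u\equiv 0$ is not a matter of ``revisiting the Cauchy--Schwarz step''; it is itself a Liouville theorem for the mean curvature operator (Theorem \ref{th_areagrowth_meancurvop}, whose proof uses the test function $\rho e^{u}$ and an ODE argument on the energy $H(R)$), and your sketch does not supply it.

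The paper's proof avoids both problems by localizing to the finite-volume piece first. For a regular value $c>t$ one works on $M_{c}=\{u\geq c\}$, which is a complete manifold with boundary of finite volume (hence parabolic, and with $|X|\in L^{2}(M_{c})$ for free), and replaces $u$ by $u_{\varepsilon}=\max\{u,c+\varepsilon\}$ so that $u_{\varepsilon}$ is constant near the level set $\partial M_{c}$ and the flux $\langle X_{\varepsilon},\nu_{c}\rangle$ vanishes identically --- no boundary sign analysis is needed at this stage. Proposition \ref{propineq} then forces $\Gamma_{u}(M_{c})$ to be minimal, Theorem \ref{th_areagrowth_meancurvop} forces $u_{\varepsilon}$ to be constant on $M_{c}$, and letting $\varepsilon\to 0$ contradicts the regularity of $c$; hence $u$ is constant on the superlevel set and in particular bounded above. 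Only then do (a), (b), (c) enter, each by a different mechanism: (a) via the Ahlfors maximum principle as in Theorem \ref{th_intro_slice}; (b) via analytic continuation from the open set where $u$ is already known to be constant (not via any boundary-flux argument); (c) by translating the angle condition into $\partial u/\partial\nu\leq 0$ on $\partial M$ and applying Theorem \ref{th_areagrowth_meancurvop} on all of $M$, where the quadratic volume growth supplies the area-integral hypothesis. If you want to salvage your write-up, the key missing idea to import is this truncation-and-localization to $M_{c}$.
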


It is worth to point out that, in the setting of manifolds without boundary and for $H=0$,
half-space properties in a spirit similar to our slice-type theorems
have been obtained in the very recent paper \cite{RSS-JDG} by H. Rosenberg, F. Schulze and J. Spruck.
More precisely, they are able to show that curvature restrictions and potential theoretic properties (parabolicity) of the base manifold $M$
in the ambient product space $M\times\mathbb{R}$ force properly
immersed minimal hypersurfaces and entire minimal graphs in a half-space to be
totally geodesic slices. This holds without any further condition on their
superlevel sets.

\bigskip

The paper is organized as follows. In Section \ref{section-equilibrium} we
recall the link between parabolicity and absolute capacity of compact subsets.
We also take the occasion to give a detailed proof of the existence and
regularity of the equilibrium potentials of condensers in the setting of
manifolds with boundary. These rely on the solution of mixed boundary value
problems in non-smooth domains. Section \ref{section-ahlfors} contains the
proof of the maximum principle characterization of parabolicity and its
applications to obtain height estimates for complete CMC hypersurfaces with
boundary into Riemannian products. In Section \ref{section-stokes} we relate
the parabolicity of a manifold with boundary to the validity of the $L^{2}%
$-Stokes theorem. We also provide a weak form of this result that applies to
get slice-type results for hypersurfaces with boundary in Riemannian products.
Further slice-type results that are based on Liouville-type theorem for graphs
are also given. In the final Appendix we survey, and compare, different
notions of parabolicity for manifolds with boundary. We also exemplify how the
results of this paper can be applied in the setting of minimal surfaces. In
particular, we recover, with a deterministic proof, a result by R. Neel on the
parabolicity of minimal graphs.\bigskip

In conclusion of this introductory part we mention that there are natural and
interesting applications and extensions of the the results obtained in this
paper both to Killing graphs and to the $p$-Laplace operator.

These aspects will be presented in the forthcoming papers \cite{ILPS-Killing}
and \cite{IPS-preprint}, respectively.

\section{Capacity \& equilibrium potentials\label{section-equilibrium}}

As in the case where $M$ has no boundary, given a compact set $K$ and an open
set $\Omega$ containing $K$ the capacity of the condenser $(K, \Omega)$ is
defined by
\[
\text{cap}(K, \Omega) = \inf\{\int_{\Omega}|\nabla u|^{2} \, :\, u\in
C^{\infty}_{c}(\Omega) \,\, , u\geq1 \text{ on } K\}.
\]
When $\Omega=M$, we write $\text{cap}(K, M) = \text{cap}(K)$ and we refer to
it as the (absolute) capacity of $K$.

A simple approximation argument shows that the infimum on the right hand side
can be equivalently computed letting $u$ range over the set
\[
\{ u \in Lip_{c}(\Omega) \,:\, u=1 \text{ on }K\}
\]
or even over
\[
W_{0}(K,\Omega) = \{ u\in C(\overline{\Omega})\cap W^{1,2}_{0}(\Omega) \,:\,
u=1 \text{ on }K \},
\]
where $W^{1,2}_{0}(\Omega)=\overline{C^{\infty}_{c}(\Omega)}$. We refer to
functions in $W_{0}(K,\Omega)$ as admissible potentials for the condenser $(K,
\Omega)$.

The usual monotonicity properties of capacity hold, namely, if $K\subseteq
K_{1}$ are compact sets and $\Omega\subseteq\Omega_{1}$ are open, then
$\text{cap}(K,\Omega_{1})\leq\text{cap}(K_{1},\Omega_{1})\leq\text{cap}%
(K_{1},\Omega)$ and this allows to define first the capacity of an open set
$U\subset\Omega$ as $\text{cap} (U,\Omega) = \sup_{U\supset K, \text{ compact}
} \text{cap}(K,\Omega)$ and then the capacity of an arbitrary set
$E\subset\Omega$ as $\text{cap}(E,\Omega) = \inf_{E\subset U \text{open}%
}\text{cap}(U,\Omega)$.

We are going to show that the Liouville-type definition of parabolicity given
in the introduction is equivalent to the statement that every compact subset
has zero capacity. This depends on the construction of equilibrium potentials
for capacity, which plays a vital role also in the proof of the $L^{2}$
divergence theorem characterization of parabolicity,
Theorem~\ref{th_intro_Stokes}. It should be pointed out that while these
results are in some sense well known, we haven't been able to find a reference
which deals explicitly with matters concerning regularity up to the boundary
of these equilibrium potentials.

The following simple lemma will be useful in the proof of the proposition.

\begin{lemma}
\label{cap_exaustion} Let $D\Subset\Omega$ be open sets, and let $D_{n}$ and
$\Omega_{n}$ be a sequence of open sets such that
\[
\overline D\subseteq D_{n+1}\subseteq D_{n} \subseteq\overline{D}_{n}
\Subset\Omega_{n} \subseteq\Omega_{n+1}\subseteq\Omega, \quad\cap_{n} D_{n} =
D,\quad\cup_{n} \Omega_{n}=\Omega.
\]
Then
\begin{equation}
\label{limcap}\lim_{n} \mathrm{cap}(\overline{D}_{n}, \Omega_{n}) =
\mathrm{cap}(\overline D, \Omega).
\end{equation}

\end{lemma}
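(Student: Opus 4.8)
The strategy is to establish the two inequalities $\mathrm{cap}(\overline D,\Omega)\le\lim_n\mathrm{cap}(\overline D_n,\Omega_n)$ and $\lim_n\mathrm{cap}(\overline D_n,\Omega_n)\le\mathrm{cap}(\overline D,\Omega)$ separately; the first is purely formal, while the second is where the exhaustion hypotheses enter, through a compactness argument. For the first one I would use only the monotonicity of capacity recalled above. Writing $a_n:=\mathrm{cap}(\overline D_n,\Omega_n)$, from $\overline{D}_{n+1}\subseteq\overline{D}_n$ and $\Omega_n\subseteq\Omega_{n+1}$ one gets $a_{n+1}\le\mathrm{cap}(\overline{D}_n,\Omega_{n+1})\le a_n$, so $\{a_n\}$ is non-increasing, while from $\overline D\subseteq\overline{D}_n$ and $\Omega_n\subseteq\Omega$ one gets $\mathrm{cap}(\overline D,\Omega)\le\mathrm{cap}(\overline{D}_n,\Omega)\le a_n$. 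Hence $L:=\lim_n a_n$ exists and $\mathrm{cap}(\overline D,\Omega)\le L$.

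For the reverse inequality I would fix $\varepsilon>0$ and produce a single admissible potential that is eventually admissible for every condenser $(\overline D_n,\Omega_n)$. Starting from a smooth $u\ge 1$ on $\overline D$ with $\int_\Omega|\nabla u|^2$ within $\varepsilon/2$ of $\mathrm{cap}(\overline D,\Omega)$, replacing $u$ by $(1+\delta)u$ with $\delta$ small so that the competitor is $>1$ on an open neighborhood $U$ of $\overline D$, and truncating at level $1$, one obtains (as in the approximation remarks preceding the statement) a function $v\in\mathrm{Lip}_c(\Omega)$ with $v\equiv 1$ on $U$ and $\int_\Omega|\nabla v|^2\le\mathrm{cap}(\overline D,\Omega)+\varepsilon$. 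Two facts then make $v$ admissible for $(\overline D_n,\Omega_n)$ once $n$ is large: (i) $\operatorname{supp}v$ is a compact subset of $\Omega=\bigcup_n\Omega_n$ and the $\Omega_n$ are open and increasing, so $\operatorname{supp}v\subseteq\Omega_n$ for $n\ge n_1$; (ii) $\overline D_n\subseteq U$ for $n\ge n_2$. For $n\ge\max(n_1,n_2)$ we then have $v\in W_0(\overline D_n,\Omega_n)$, hence $a_n\le\int_{\Omega_n}|\nabla v|^2\le\int_\Omega|\nabla v|^2\le\mathrm{cap}(\overline D,\Omega)+\varepsilon$; letting $n\to\infty$ and then $\varepsilon\to 0$ gives $L\le\mathrm{cap}(\overline D,\Omega)$, which together with the first step yields \eqref{limcap}.

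The step I expect to require the most care, though it is still elementary, is (ii), since this is exactly where $\bigcap_n D_n=D$ is used — in view of the nesting it is equivalent to $\bigcap_n\overline D_n=\overline D$. The argument I have in mind is the standard nested-compacts one: the sets $C_n:=\overline D_n\setminus U$ are compact subsets of the fixed compact set $\overline D_1$ and decrease with $n$; were they all nonempty, the finite intersection property would force $\bigcap_n C_n\ne\emptyset$, whereas $\bigcap_n C_n=\bigl(\bigcap_n\overline D_n\bigr)\setminus U=\overline D\setminus U=\emptyset$ because $\overline D\subseteq U$. Thus some $C_{n_2}$ is empty, i.e. $\overline D_{n_2}\subseteq U$, and by monotonicity of the $\overline D_n$ this persists for all $n\ge n_2$. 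The only other point needing the same care is the reduction from a generic competitor satisfying $u\ge 1$ on $\overline D$ to one identically equal to $1$ on a whole neighborhood of $\overline D$, which is what the rescaling-and-truncation device achieves without spoiling the energy bound in the limit $\delta\to 0$.
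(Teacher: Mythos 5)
Your proposal is correct and follows essentially the same route as the paper: monotonicity gives $\lim_n\mathrm{cap}(\overline D_n,\Omega_n)\geq\mathrm{cap}(\overline D,\Omega)$, and for the reverse inequality both arguments modify a (near-)optimal admissible potential so that it is identically $1$ on an open neighborhood of $\overline D$ while keeping compact support in $\Omega$ — you via the rescale-by-$(1+\delta)$-and-truncate device, the paper via the affine truncation $\phi_\epsilon=\min\{1,((\phi-\epsilon)/(1-2\epsilon))_+\}$ — and then observe that for all large $n$ this function is admissible for $(\overline D_n,\Omega_n)$. Your finite-intersection-property justification that $\overline D_n$ eventually lies inside any fixed neighborhood of $\overline D$ is actually more explicit than the paper's corresponding step (asserted there simply "by assumption"); note only that this step genuinely requires $\bigcap_n\overline D_n=\overline D$, which is the intended reading of the hypothesis $\bigcap_n D_n=D$ but is not a formal consequence of it.
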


\begin{proof}
It follows from monotonicity that, for every $n$, $\mathrm{cap}(\overline
{D}_{n},\Omega_{n})$ is monotonically decreasing and greater than or equal to
$\mathrm{cap}(\overline{D}, \Omega)$ so the limit on the left hand side of
\eqref{limcap} exists and
\[
\lim_{n} \mathrm{cap}(\overline{D}_{n}, \Omega_{n}) \geq\mathrm{cap}(\overline
D, \Omega).
\]
For the converse, let $\phi\in Lip_{c}(\Omega)$ with $\phi=1$ on $\overline
D$, and for $\epsilon>0$ let
\[
\phi_{\epsilon}= \min\left\{  1, \left(  \frac{\phi-\epsilon}{1-2\epsilon
}\right)  _{+} \right\}  .
\]
By assumption, for every sufficiently large $n$ we have
\[
\overline{D}_{n} \subseteq\{x\,:\, \epsilon\leq\phi(x)\leq1-\epsilon\}
\subset\Omega_{n},
\]
and therefore $\phi_{\epsilon}$ is an admissible potential for the condenser
$(\overline D_{n}, \Omega_{n}) $ so that
\[
\int|\nabla\phi_{\epsilon}|^{2} \geq\mathrm{cap}(\overline{D}_{n}, \Omega
_{n}),
\]
whence, letting $n\to\infty$,
\[
\lim_{n} \mathrm{cap} (\overline{D}_{n}, \Omega_{n})\leq\int|\nabla
\phi_{\epsilon}|^{2} \quad\forall\epsilon>0.
\]
On the other hand, by monotone convergence,
\[
\int|\nabla\phi_{\epsilon}|^{2} = \frac{1}{(1-2\epsilon)^{2}} \int_{\{x\,:\,
\epsilon\leq\phi(x)\leq1-\epsilon\}} |\nabla\phi|^{2} \to\int_{\Omega}%
|\nabla\phi|^{2} \quad\text{as }\, \epsilon\to0,
\]
and we conclude that
\[
\lim_{n} \mathrm{cap} (\overline{D}_{n}, \Omega_{n})\leq\int|\nabla\phi|^{2},
\]
which in turn implies that
\[
\lim_{n} \mathrm{cap}(\overline{D}_{n}, \Omega_{n}) \leq\mathrm{cap}(\overline
D, \Omega).
\]

\end{proof}

\begin{proposition}
\label{equilibrium_potentials} Let $D\Subset\Omega$ be relatively compact
domains with smooth boundaries $\overline{\partial_{0}D}$ and $\overline
{\partial_{0}\Omega}$ transversal to $\partial M$. Then there exists $u\in
W_{0}(\overline{D},\Omega)\cap C^{\infty}((\Omega\setminus\overline{D}%
)\cup\partial_{1}(\Omega\setminus\overline{D}))$ such that $0\leq u\leq1$ and
\[
\mathrm{cap}(\overline{D},\Omega)=\int_{\Omega}|\nabla u|^{2}.
\]

\end{proposition}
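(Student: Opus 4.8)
The plan is to realize $u$ as the unique minimizer of the Dirichlet energy in the natural admissible class, to extract from its first variation that $u$ solves the mixed problem
\[
\Delta u=0 \ \text{ on } \Omega\setminus\overline D,\qquad \frac{\partial u}{\partial\nu}=0 \ \text{ on } \partial_1(\Omega\setminus\overline D),\qquad u=1 \ \text{ on }\partial_0 D,\qquad u=0 \ \text{ on }\partial_0\Omega,
\]
and then to bootstrap the regularity of $u$, treating separately the smooth Neumann portion of the boundary and the Dirichlet portion together with its corners along $\partial M$.

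\emph{The minimizer.} Let $\mathcal A=\{v\in W_0^{1,2}(\Omega):0\le v\le1 \text{ a.e.},\ v=1 \text{ a.e.\ on }D\}$. It is non-empty (it contains any cutoff equal to $1$ near $\overline D$ and supported in $\Omega$), convex, and strongly closed in $W^{1,2}(\Omega)$, hence weakly closed. Since truncating by $\min\{1,\max\{0,\cdot\}\}$ neither increases the Dirichlet integral nor destroys the defining conditions, the approximation remark following the definition of capacity gives $\inf_{v\in\mathcal A}\int_\Omega|\nabla v|^2\le\mathrm{cap}(\overline D,\Omega)$ (the reverse inequality will come for free at the end). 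A minimizing sequence $\{u_k\}\subset\mathcal A$ is bounded in $W^{1,2}(\Omega)$ because $0\le u_k\le1$ and $\Omega$ is relatively compact; passing to a subsequence, $u_k\rightharpoonup u$ weakly in $W^{1,2}(\Omega)$ with $u\in\mathcal A$, and weak lower semicontinuity of the Dirichlet integral gives $\int_\Omega|\nabla u|^2\le\liminf_k\int_\Omega|\nabla u_k|^2=\inf_{\mathcal A}\int_\Omega|\nabla v|^2$; so $u$ is a minimizer, and strict convexity makes it unique. In particular $0\le u\le 1$ a.e.

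\emph{First variation and regularity off the corners.} For $\varphi\in C_c^\infty(\Omega\setminus\overline D)$ the function $u+t\varphi$ is admissible for small $|t|$ (it still equals $1$ near $\overline D$; and on each connected component of $\Omega\setminus\overline D$ the minimizer is either constant or, by the strong maximum and Hopf boundary point principles once regularity is known, lies strictly between $0$ and $1$, so the box constraint is not violated for $|t|$ small). Hence $\int_\Omega\langle\nabla u,\nabla\varphi\rangle=0$ for all such $\varphi$, i.e.\ $u$ is a weak Neumann solution of $\Delta u=0$ on $\Omega\setminus\overline D$ with $\partial u/\partial\nu=0$ on $\partial_1(\Omega\setminus\overline D)$ in the sense introduced above, while $u=0$ on $\partial_0\Omega$ (from $u\in W_0^{1,2}(\Omega)$) and $u=1$ on $\partial_0 D$ (since $u-1$ vanishes on $\overline D$) in the sense of traces. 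Interior elliptic regularity gives $u\in C^\infty$ on $\mathrm{int}\,M\cap(\Omega\setminus\overline D)$. Any point of $\partial_1(\Omega\setminus\overline D)=(\partial M\cap\Omega)\setminus\overline D$ is an interior point of the smooth hypersurface $\partial M$ at positive distance from $\overline{\partial_0 D}$ and $\overline{\partial_0\Omega}$ (because $\overline D$ is closed and $\Omega$ is open); flattening $\partial M$ in Fermi coordinates and applying the classical Schauder/$L^p$ theory for the homogeneous Neumann problem for the Laplace--Beltrami operator (equivalently, even reflection across $\partial M$ followed by interior estimates) gives $u\in C^\infty$ up to $\partial_1(\Omega\setminus\overline D)$. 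Thus $u\in C^\infty\bigl((\Omega\setminus\overline D)\cup\partial_1(\Omega\setminus\overline D)\bigr)$.

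\emph{Continuity at the Dirichlet boundary, conclusion, and the hard part.} It remains to show that $u$ extends continuously to $\overline\Omega$ with $u=1$ on $\overline D$ and $u=0$ on $\partial_0\Omega$; then $u\in W_0(\overline D,\Omega)$, so $\int_\Omega|\nabla u|^2\ge\mathrm{cap}(\overline D,\Omega)$, and combined with the minimizing step $\int_\Omega|\nabla u|^2=\mathrm{cap}(\overline D,\Omega)$, the bound $0\le u\le1$ being already built into $\mathcal A$. The transversality hypothesis is used precisely here: because $\overline{\partial_0 D}$ and $\overline{\partial_0\Omega}$ meet $\partial M$ transversally, $\Omega\setminus\overline D$ is a Lipschitz domain, hence satisfies a uniform exterior cone condition at \emph{every} boundary point, including the corner points along $\overline{\partial_0 D}\cap\partial M$ and $\overline{\partial_0\Omega}\cap\partial M$. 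Consequently each boundary point of the Dirichlet set is regular for $u$: away from $\partial M$ one uses the classical cone barrier for the Dirichlet problem, while at a corner one either invokes the regularity theory for the mixed (Zaremba) problem in Lipschitz domains or, more elementarily, reflects evenly across the Neumann boundary $\partial M$, turning the problem into a pure Dirichlet problem on the doubled Lipschitz domain equipped with the (merely Lipschitz, but uniformly elliptic) doubled metric, to which the cone barrier again applies. This corner analysis is the genuinely delicate point: there $u$ solves a mixed Dirichlet--Neumann problem in a non-smooth domain, so neither interior nor Neumann regularity is available and one cannot expect more than continuity — and continuity is exactly what the exterior cone condition, and hence transversality, secures. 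This finishes the proof.
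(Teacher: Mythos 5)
Your route is genuinely different from the paper's, and it is essentially viable. The paper does not use the direct method at all: it invokes Lieberman's solvability theory for mixed boundary value problems to produce at once a classical solution $u\in C(\overline{\Omega}\setminus D)\cap C^{\infty}((\Omega\setminus\overline D)\cup\partial_{1}(\Omega\setminus\overline D))$ of the mixed problem, so continuity up to the whole boundary --- including the corners $\overline{\partial_{0}D}\cap\partial M$ --- comes for free; the work then goes into showing $u\in W^{1,2}(\Omega)$ and identifying $\int_{\Omega}|\nabla u|^{2}$ with the capacity, which the paper does by exhausting $\Omega\setminus\overline D$ with regular level sets $\Omega_{\epsilon}\setminus D_{\epsilon}$, proving the Dirichlet principle there, and passing to the limit via the capacity-continuity Lemma \ref{cap_exaustion}. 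Your approach inverts the difficulty: membership in $W_{0}^{1,2}(\Omega)$ and the energy identity are immediate from the minimization, and all the work is pushed into boundary continuity, in particular at the Dirichlet--Neumann corners, which you handle by the exterior cone condition (guaranteed by transversality) together with reflection across $\partial M$ and barrier/Wiener-type regularity for divergence-form operators with Lipschitz coefficients. Both are legitimate; yours replaces the citation of \cite{Lieberman- JMAA} by a self-contained (if somewhat compressed) corner analysis, while the paper's replaces your corner analysis by the level-set exhaustion argument. Note that the paper's remark following the proposition asserts that the Lieberman potential coincides with the variational one, but it deduces this \emph{using} the $W^{1,2}$-regularity obtained along the Lieberman route, so your argument is not a rephrasing of anything already in the text.

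One wrinkle you should repair: the first-variation step is circular as written. You justify the admissibility of $u+t\varphi$ by invoking the strong maximum principle and the Hopf lemma, i.e.\ $0<u<1$ on $\Omega\setminus\overline D$, but those require $u$ to be harmonic, which is exactly what the first variation is supposed to establish. The standard fix is to minimize over $\mathcal A'=\{v\in W_{0}^{1,2}(\Omega):v=1\text{ a.e.\ on }D\}$ with no box constraint: the truncation $\min\{1,\max\{0,v\}\}$ lies in $\mathcal A'$ and does not increase the energy, so by uniqueness of the minimizer one gets $0\leq u\leq1$ a posteriori, while $u+t\varphi\in\mathcal A'$ for every $t$ and every $\varphi\in C_{c}^{\infty}(\Omega\setminus\overline D)$, yielding the Euler--Lagrange identity without any circularity. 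With that adjustment the argument goes through.
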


\begin{proof}
Consider the mixed boundary value problem
\begin{equation}
\label{cap1}%
\begin{cases}
\Delta u = 0 \, \text{ in } \Omega\setminus\overline D & \\
\frac{\partial u}{\partial\nu} = 0 \, \text{ on } \partial_{1} (\Omega
\setminus\overline D)) & \\
u=0 \text{ on } \partial_{0} \Omega\,\, , u=1 \text{ on } \partial_{0} D. &
\end{cases}
\end{equation}
If follows from \cite{Lieberman- JMAA}, and the well known local regularity
theory, that \eqref{cap1} has a classical solution $u\in C(\overline{\Omega
}\setminus D) \cap C^{\infty}((\Omega\setminus\overline D) \cup\partial_{1}
(\Omega\setminus\overline D))$. By the strong maximum principle and the
boundary point lemma, it follows that $0<u<1$ on $\Omega\setminus\overline D$.
We extend $u$ to $\Omega$ by setting it equal to $1$ on $D$. To show that
$u\in W^{1,2}(\Omega)$, choose $\epsilon\in(0,1)$ such that $\epsilon$ and
$1-\epsilon$ are regular values of $u$, and let $\Omega_{\epsilon}= \{x \,:\,
u(x)\geq\epsilon\}$, $D_{\epsilon}=\{x\,:\, u(x) <1-\epsilon\}$ and
\[
u_{\epsilon}= \frac{u-\epsilon}{1-2\epsilon},
\]
so that $u_{\epsilon}\in C^{2} (\overline{\Omega}_{\epsilon}\setminus
D_{\epsilon})$ satisfies
\[%
\begin{cases}
\Delta u_{\epsilon}= 0 \, \text{ in } \Omega_{\epsilon}\setminus\overline
D_{\epsilon} & \\
\frac{\partial u_{\epsilon}}{\partial\nu} = 0 \, \text{ on } \partial_{1}
(\Omega_{\epsilon}\setminus\overline D_{\epsilon})) & \\
u_{\epsilon}=0 \text{ on } \partial_{0} \Omega_{\epsilon}\,\, , u=1 \text{ on
} \partial_{0} D_{\epsilon}, &
\end{cases}
\]

By the usual Dirichlet principle $u_{\epsilon}$ is the equilibrium potential
of the capacitor $(\overline{D}_{\epsilon}, \Omega_{\epsilon}),$ and, in
particular,
\begin{equation}
\label{cap2}\frac1{1-2\epsilon}\int_{\Omega_{\epsilon}\setminus D_{\epsilon}%
}|\nabla u|^{2} =\int_{\Omega_{\epsilon}\setminus D_{\epsilon}}|\nabla
u_{\epsilon}|^{2} = \mathrm{cap}(\overline{D}_{\epsilon}, \Omega_{\epsilon})
\end{equation}
Indeed, let $\phi\in Lip_{c}(\Omega_{\epsilon})$ with $\phi=1$ on
$D_{\epsilon}$, and let $v=u_{\epsilon}-\phi$. Then $\phi=u_{\epsilon}-v$ and
we have
\[
\int_{\Omega\epsilon} |\nabla\phi|^{2} = \int_{\Omega_{\epsilon}\setminus
D_{\epsilon}} |\nabla(u_{\epsilon}-v)|^{2}= \int_{\Omega_{\epsilon}\setminus
D_{\epsilon}}(|\nabla u_{\epsilon}|^{2} + |\nabla v|^{2} - 2\langle\nabla
u_{\epsilon},\nabla v\rangle)
\]
Since $\Delta u_{\epsilon}=0$ on ${\Omega_{\epsilon}\setminus D_{\epsilon}}$
and $v=0$ on $\partial_{0} (\Omega_{\epsilon}\setminus\overline{D}_{\epsilon
})$ while $\partial u_{\epsilon}/\partial\nu= 0$ on $\partial_{1}%
(\Omega_{\epsilon}\setminus\overline{D}_{\epsilon})$,
\[
\int_{\Omega_{\epsilon}\setminus D_{\epsilon}}\langle\nabla u_{\epsilon
},\nabla v\rangle) = - \int_{\Omega_{\epsilon}\setminus D_{\epsilon}} v\Delta
u_{\epsilon}+\int_{\partial_{0}(\Omega_{\epsilon}\setminus D_{\epsilon}%
)\cup\partial_{1}(\Omega_{\epsilon}\setminus D_{\epsilon})} \langle\nabla
u_{\epsilon},\nu\rangle v =0,
\]
so that
\[
\int_{\Omega\epsilon} |\nabla\phi|^{2} = \int_{\Omega_{\epsilon}\setminus
D_{\epsilon}}(|\nabla u_{\epsilon}|^{2} + |\nabla v|^{2})\geq\int
_{\Omega_{\epsilon}\setminus D_{\epsilon}} |\nabla u_{\epsilon}|^{2},.
\]
as claimed.

Letting $\epsilon\rightarrow0$ $\Omega_{\epsilon}\setminus D_{\epsilon
}\nearrow\Omega\setminus D$, so that, by monotone convergence, the integral in
\eqref{cap2} converges to
\[
\int_{\Omega\setminus D}|\nabla u|^{2}.
\]
On the other hand, by the previous lemma,
\[
\mathrm{cap}(\overline{D}_{\epsilon},\Omega_{\epsilon})\rightarrow
\mathrm{cap}(\overline{D},\Omega),\quad\text{as }\,\epsilon\rightarrow0
\]
and we conclude that $u\in W^{1,2}(\Omega)$ so that, in fact, $u\in
W_{0}(\overline{D},\Omega)$ and
\[
\int_{\Omega}|\nabla u|^{2}=\mathrm{cap}(\overline{D},\Omega),
\]
as required to complete the proof.
\end{proof}

\begin{remark}
It is worth to point out that the equilibrium potential $u$ of the capacitor
$(\overline{D},\Omega)$ constructed using Liebermann approach coincides with
the one obtained by applying the direct calculus of variations to the energy
functional on the closed convex space
\[
W_{\Gamma}^{1,2}\left(  \Omega\backslash\overline{D}\right)  =\left\{  u\in
W^{1,2}\left(  \Omega\right)  :\left.  u\right\vert _{\partial_{0}D}=0\text{
and }\left.  u\right\vert _{\partial_{0}\Omega}=1\right\}  .
\]
Here, Dirichlet data are understood in the trace sense. Thanks to the global
$W^{1,2}$-regularity established in Proposition \ref{equilibrium_potentials},
this follows e.g. either from maximum principle considerations or from the
convexity of the energy functional.
\end{remark}

\begin{proposition}
\label{equilibrium_potentials2} Let $D$ be a relatively compact domain and let
$\Omega_{j}$ be an increasing exhaustion of $M$ by relatively compact open
domains with $\overline{D}\subset\Omega_{1}$. Assume that $\overline
{\partial_{0}D}$ and $\overline{\partial_{0}\Omega_{j}}$ are smooth and
transversal to $\partial M$, and for every $j$, let $u_{j}$ be the equilibrium
potential of the capacitor $(\overline{D},\Omega_{j})$ constructed in
Proposition~\ref{equilibrium_potentials}. Then $u_{j}$ converges monotonically
to a function $u\in C(M)\cap W_{loc}^{1,2}\cap C^{2}(M\setminus\overline{D})$
such that $0\leq u\leq1$, $u=1$ on $\overline{D}$, $u$ is harmonic on
$M\setminus\overline{D}$, $\partial u/\partial\nu=0$ on $\partial
_{1}(M\setminus\overline{D})$ and $u$ is a weak Neumann supersolution of the
Laplace equation on $M$. Moreover $\nabla u\in L^{2}(M)$,
\[
\mathrm{cap}(\overline{D})=\int_{M}|\nabla u|^{2}.
\]

\end{proposition}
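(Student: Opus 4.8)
The plan is to exploit the monotonicity of the equilibrium potentials $u_j$ produced by Proposition~\ref{equilibrium_potentials} together with the capacity bound $\int_{\Omega_j}|\nabla u_j|^2 = \mathrm{cap}(\overline D, \Omega_j)$, which is uniformly bounded above by $\mathrm{cap}(\overline D, \Omega_1)$ by monotonicity of capacity. First I would observe that for $i<j$, restricting $u_j$ to $\Omega_i$ and comparing with $u_i$: both are harmonic on $\Omega_i\setminus\overline D$ with Neumann condition on $\partial_1$, both equal $1$ on $\partial_0 D$, but $u_j\geq 0 = u_i$ on $\partial_0\Omega_i$. By the strong maximum principle and boundary point lemma (which applies on the Neumann portion too, since $u_j$ is smooth up to $\partial_1$), one gets $u_i\leq u_j$ on $\Omega_i$; extending by $1$ on $D$ preserves this. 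Hence $u_j\nearrow u$ pointwise to some $u:M\to[0,1]$ with $u\equiv 1$ on $\overline D$.

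Next I would upgrade this pointwise convergence to the claimed regularity. On any fixed $\Omega_i\setminus\overline D$, the sequence $\{u_j\}_{j\geq i}$ is a monotone sequence of harmonic functions (in the interior) satisfying $\partial u_j/\partial\nu=0$ on $\partial_1$, uniformly bounded in $[0,1]$; by interior elliptic estimates and the Schauder-type boundary estimates for the oblique/Neumann problem up to $\partial_1$ (and the Lieberman regularity already invoked in Proposition~\ref{equilibrium_potentials}), together with Harnack's principle for monotone sequences of harmonic functions, the convergence is in $C^2_{loc}$ on $(M\setminus\overline D)\cup\partial_1(M\setminus\overline D)$. Consequently $u$ is harmonic on $M\setminus\overline D$, smooth up to $\partial_1(M\setminus\overline D)$, and satisfies $\partial u/\partial\nu = 0$ there; moreover $u\in C^0(M)$ since $u$ is continuous on $M\setminus\overline D$, continuous ($\equiv 1$) on $\overline D$, and attains the boundary value $1$ continuously along $\partial_0 D$ (each $u_j$ does, uniformly, by the barrier construction underlying Proposition~\ref{equilibrium_potentials}, or simply because $u$ is squeezed between $u_i$ and $1$ near $\partial_0 D$).

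For the energy bound, I would use the weak lower semicontinuity / monotone convergence together with a Caccioppoli-type argument. Since $\nabla u_j \rightharpoonup \nabla u$ weakly in $L^2_{loc}$ (the $u_j$ being bounded in $W^{1,2}(\Omega_i)$ uniformly, by a cutoff estimate using $\int|\nabla u_j|^2\leq \mathrm{cap}(\overline D,\Omega_1)$), for each fixed $i$ we get
\[
\int_{\Omega_i}|\nabla u|^2 \leq \liminf_j \int_{\Omega_i}|\nabla u_j|^2 \leq \liminf_j \int_{\Omega_j}|\nabla u_j|^2 = \liminf_j \mathrm{cap}(\overline D,\Omega_j) = \mathrm{cap}(\overline D),
\]
the last equality by definition of absolute capacity as $\sup$ over... actually $\inf$; here $\mathrm{cap}(\overline D,\Omega_j)\searrow \mathrm{cap}(\overline D, M)$ by monotonicity and the definition of $\mathrm{cap}(\overline D)$. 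Letting $i\to\infty$ by monotone convergence gives $\nabla u\in L^2(M)$ with $\int_M|\nabla u|^2\leq \mathrm{cap}(\overline D)$. For the reverse inequality, I would note that $u\in W_0(\overline D, \Omega_j)$ is not quite true, but truncating $u$ near infinity: for the admissible competitors in $\mathrm{cap}(\overline D, \Omega_j)$ one can use $u_j$ itself or, to get $\geq$, use that any admissible $\phi$ for $\mathrm{cap}(\overline D,\Omega_j)$ when plugged against the harmonicity of $u_j$ shows $\int|\nabla u_j|^2\leq\int|\nabla\phi|^2$; passing $j\to\infty$ and then optimizing shows $\int_M|\nabla u|^2 \geq \mathrm{cap}(\overline D)$ directly once one knows $u_j$ minimizes. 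Cleaner: since $u=\lim u_j$ and $\int_{\Omega_j}|\nabla u_j|^2=\mathrm{cap}(\overline D,\Omega_j)\to\mathrm{cap}(\overline D)$, the already-established $\int_M|\nabla u|^2\leq\mathrm{cap}(\overline D)$ combined with the fact that $u$ restricted to $\Omega_j$ (suitably cut off) is admissible for $(\overline D,\Omega_j)$—giving $\int|\nabla u|^2\geq \mathrm{cap}(\overline D,\Omega_j)\to\mathrm{cap}(\overline D)$—forces equality. Finally, that $u$ is a weak Neumann supersolution on all of $M$ follows from the $\Delta u=0$ on $M\setminus\overline D$, $\partial u/\partial\nu=0$ on $\partial_1$, and $u\equiv 1$ (a supersolution trivially) on $D$, glued across $\partial_0 D$ using that $u$ attains its maximum value $1$ there: for $0\leq\varphi\in C_c^\infty(M)$, $-\int_M\langle\nabla u,\nabla\varphi\rangle = -\int_{M\setminus\overline D}\langle\nabla u,\nabla\varphi\rangle = \int_{\partial_0 D}\frac{\partial u}{\partial\nu_{D}}\varphi \leq 0$ since $u\leq 1 = u|_{\partial_0 D}$ makes the outward (from $M\setminus\overline D$) normal derivative nonpositive—a Hopf-lemma sign.

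\textbf{The main obstacle} I anticipate is the boundary regularity bookkeeping at the ``corner'' $\overline{\partial_0 D}\cap\partial M$ and along $\partial_1(M\setminus\overline D)$: one must be sure the monotone convergence $u_j\nearrow u$ is strong enough (at least $C^1$, ideally $C^2$) up to the Neumann boundary to legitimately pass the equation $\partial u_j/\partial\nu=0$ to the limit and to justify the integration by parts in the supersolution verification. This is exactly the point the authors flag as not well-documented in the literature; I would handle it by invoking the same Lieberman-type mixed-problem regularity used in Proposition~\ref{equilibrium_potentials} uniformly in $j$ (the estimates depend only on the fixed geometry of $D$, $\partial M$, and a fixed $\Omega_i$, not on $j$), combined with the classical Harnack convergence theorem for monotone sequences of solutions.
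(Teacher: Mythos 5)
Your overall strategy coincides with the paper's: monotone convergence of the $u_j$ via the comparison principle, Schauder/Lieberman estimates away from $\overline{\partial_0 D}\cup\overline{\partial_0\Omega_{j_0}}$ plus Harnack-type convergence to get harmonicity and the Neumann condition in the limit, weak lower semicontinuity for the upper energy bound, and a Hopf-type sign of the normal derivative for the supersolution property. But there is a genuine gap in your argument for the reverse energy inequality $\int_M|\nabla u|^2\geq\mathrm{cap}(\overline D)$. Your ``cleaner'' route claims that $u$ ``suitably cut off'' on $\Omega_j$ is admissible for $(\overline D,\Omega_j)$ and that this yields $\int_M|\nabla u|^2\geq\mathrm{cap}(\overline D,\Omega_j)$. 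Admissibility of the cut-off function $v$ only gives $\int|\nabla v|^2\geq\mathrm{cap}(\overline D,\Omega_j)$, and there is no inequality $\int|\nabla u|^2\geq\int|\nabla v|^2$: multiplying by a cutoff introduces the term $u^2|\nabla\chi|^2$ and can increase the energy arbitrarily. The natural repair, truncating by levels via $v_\epsilon=(u-\epsilon)_+/(1-\epsilon)$ (which \emph{does} satisfy $\int|\nabla v_\epsilon|^2\leq(1-\epsilon)^{-2}\int|\nabla u|^2$), fails because $\{u>\epsilon\}$ need not be relatively compact: $u$ need not tend to $0$ at infinity (e.g.\ on a manifold with a parabolic end $u$ stays bounded away from zero there), so $v_\epsilon$ is not an admissible competitor. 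Your other suggestion (``plug admissible $\phi$ against the harmonicity of $u_j$, pass $j\to\infty$, optimize'') only re-derives $\int|\nabla u_j|^2=\mathrm{cap}(\overline D,\Omega_j)$ and does not transfer the limit of norms to the weak limit $u$; that transfer is exactly the issue. The paper resolves it with Mazur's lemma: a convex combination $\tilde u_j$ of the $u_j$ converges \emph{strongly} in $W^{1,2}$ to $u$, each $\tilde u_j$ is compactly supported and equal to $1$ on $\overline D$, hence admissible for $(\overline D,M)$, so $\int_M|\nabla u|^2=\lim\int|\nabla\tilde u_j|^2\geq\mathrm{cap}(\overline D)$. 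Some device of this kind is needed.

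A secondary, more technical issue: in verifying that $u$ is a weak Neumann supersolution you integrate by parts over $M\setminus\overline D$ and evaluate $\partial u/\partial\nu$ on $\partial_0 D$, but $C^1$ regularity of $u$ up to $\partial_0 D$ (in particular at the corner $\overline{\partial_0 D}\cap\partial M$) has not been established -- the Schauder estimates you invoke are only uniform on sets at positive distance from $\partial_0 D$. The paper sidesteps this by integrating over the level sets $\Gamma_n=\{u<\eta_n\}$ for regular values $\eta_n\to 1$, where $\partial_0\Gamma_n$ is a smooth hypersurface in the interior of $M\setminus\overline D$, $\partial u/\partial\nu\geq 0$ there, and then letting $n\to\infty$. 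Your Hopf-sign idea is the right one, but it should be implemented on these interior level sets rather than on $\partial_0 D$ itself.
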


\begin{proof}
Extend $u_{j}$ to all of $M$ by setting it equal to zero in $M\setminus
\Omega_{j}$. It follows by the comparison principle that $0\leq u_{j}\leq
u_{j+1}\leq1$ in $\Omega_{j}\setminus\overline{D}$, and therefore the sequence
$u_{j}$ converges monotonically to a function $u$. Note that since
$u_{j}(x)\leq u(x)\leq1$ and $u_{j}(x)\rightarrow1$ as $x\rightarrow
y\in\partial_{0}D$ is follows that $u$ is continuous on $\overline{D}$ and
there it is equal to $1$. Moreover, by the Schauder type estimate contained in
Lemma 1 in \cite{Lieberman- JMAA}, for every $\alpha\in(0,1)$, every $j_{o}$
and every sufficiently small $\eta>0$ there exists a constant $C$ depending
only on $\alpha$ $\eta$, $j_{o}$ and on the geometry of $M$ in a neighborhood
of $B_{j_{o},\eta}=\{x\in\Omega_{j_{o}}\setminus\overline{D}\,:\,\mathrm{dist}%
(x,\partial_{0}D\cup\partial_{0}\Omega_{jo})\geq\eta\}
$ such that, for every $j\geq j_{o}$
\[
||u_{j}||_{C^{2,\alpha}(B_{\eta})}\leq C\sup_{B_{\eta/2}}|u_{j}(x)|.
\]
It follows immediately that (possibly passing to a subsequence) the sequence
$u_{j}$ converges in $C^{2}(B_{j_{o},\eta})$ for every $j_{o}$ and $\eta>0$ so
that the limit function $u$ is harmonic in $\mathrm{int}\,M\setminus
\overline{D}$ and $C^{2}$ up to $\partial_{1}(M\setminus\overline{D})$ where
it satisfies the Neumann boundary condition $\partial u/\partial\nu=0$.
Summing up, $u\in C^{0}(M\setminus D)\cup C^{2}((M\setminus\overline{D}%
)\cup(\partial_{1}(M\setminus\overline{D})))$ is a classical solution of the
mixed boundary problem
\[%
\begin{cases}
\Delta u\geq0\text{ on }M\setminus\overline{D}\\
\frac{\partial u}{\partial\nu}\leq0\,\text{ on }\,\partial_{1}(M\setminus
\overline{D})\\
u=1\text{ on }\partial_{0}D\\
0\leq u\leq1.
\end{cases}
\]
On the other hand, since
\[
\int_{\Omega_{j}}|\nabla u_{j}|^{2}=\mathrm{cap}(\overline{D},\Omega
_{j})\searrow\mathrm{cap}(\overline{D}),
\]
the sequence $u_{j}\in C^{0}(M)\cap W_{c}^{1,2}(M)$ converges pointwise to $u$
and $\nabla u_{j}$ is bounded in $L^{2}(M).$ It follows easily (see, e.g.,
Lemma 1.33 in \cite{HKM}) that $\nabla u\in L^{2}(M)$ and $\nabla
u_{j}\rightarrow\nabla u$ weakly in $L^{2}$. By the weak lower semicontinuity
of the energy functional, it follows that
\[
\int_{M}|\nabla u|^{2}\leq\liminf_{j}\int_{M}|\nabla u_{j}|^{2}=\mathrm{cap}%
(\overline{D})
\]
On the other hand, By Mazur's Lemma, a convex combination $\tilde{u}_{j}$ of
the $u_{j}$ is such that $\nabla\tilde{u}_{j}\rightarrow\nabla u$ strongly in
$L^{2}(M),$ and since each $\tilde{u}_{j}\in C^{0}\cap W^{1,2}(M)$ is
compactly supported, and equal to $1$ on $\overline{D}$, it admissible for the
capacitor $(\overline{D},M)$ and we deduce that
\[
\int_{M}|\nabla u|^{2}=\lim\int_{M}|\nabla\tilde{u}_{j}|^{2}\geq
\mathrm{cap}(\overline{D}),
\]
and we conclude that
\[
\int_{M}|\nabla u|^{2}=\mathrm{cap}(\overline{D}),
\]
as required.

Finally, assume that $u$ is non-constant so that, by the strong maximum
principle, $u<1$ in $M\setminus\overline D$. Let $\eta_{n}\to1$ be a sequence
of regular values of $u$, and set $\Gamma_{n}=\{x \,:\, u(x)< \eta_{n}\}.$
Using the fact that $\Delta u=0$ on $\Gamma_{n} \subset M\setminus\overline
D$, $\partial u/\partial\nu=0$ on $\partial_{1} \Gamma_{n}$ and $\partial
u/\partial\nu\geq0$ on $\partial_{0}\Gamma_{n}$, given $0\leq\rho\in
C^{\infty}_{c} (M)$, we compute
\[
\int_{M} \langle\nabla u,\nabla\rho\rangle= \lim_{n} \int_{\Gamma_{n}}
\langle\nabla u, \nabla\rho\rangle= \lim_{n} \{- \int_{\Gamma_{n}} \rho\Delta
u + \int_{\partial_{0} \Gamma_{n} \cup\partial_{1}\Gamma_{n}}
\!\!\!\!\!\!\!\!\rho\langle\nabla u, \nu\rangle\} \geq0,
\]
and $u$ is a weak Neumann supersolution of the Laplace equation on $M$.
\end{proof}

We then obtain the announced equivalent characterization of parabolicity.

\begin{theorem}
\label{capacity_parabolicity} Let $(M,\langle\,,\rangle)$ be a connected
Riemannian manifold with (possibily empty) boundary $\partial M$. The
following are equivalent:

\begin{itemize}
\item[(i)] The capacity of every compact set $K$ in $M$ is zero.

\item[(ii)] For every relatively compact open domain $D\Subset M$ there exists
an increasing sequence of functions $h_{j}\in C^{0}(M)\cap W^{1,2}_{c}(M)$
with $h_{j}=1$ on $D$, $0\leq h_{j}\leq h_{j+1}\leq1$, $h_{j}$ harmonic in the
set $\{x: 0<h_{j}(x)<1\}\cap\mathrm{int} M$, such that
\[
\int_{M} |\nabla h_{j}|^{2}\to0 \quad\text{ as }\, j\to+\infty.
\]

\item[(iii)] $M$ is parabolic.
\end{itemize}
\end{theorem}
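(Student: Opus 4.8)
The plan is to establish the cycle of implications $(i)\Rightarrow(ii)\Rightarrow(iii)\Rightarrow(i)$. For $(i)\Rightarrow(ii)$, fix a relatively compact domain $D\Subset M$; by shrinking slightly we may assume (using Sard) that $\overline{\partial_0 D}$ is smooth and transversal to $\partial M$, and that $\overline D$ is contained in the first term of an exhaustion $\{\Omega_j\}$ of the type appearing in Proposition \ref{equilibrium_potentials2}. Applying that proposition to the pair $(\overline D, \Omega_j)$ produces the equilibrium potential $v_j$ with $\int_M|\nabla v_j|^2=\mathrm{cap}(\overline D,\Omega_j)$, each $v_j\in C^0(M)\cap W^{1,2}_c(M)$, $v_j=1$ on $\overline D$, $0\le v_j\le v_{j+1}\le 1$, and $v_j$ harmonic on $\{0<v_j<1\}\cap\mathrm{int}M$. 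Since $\mathrm{cap}(\overline D,\Omega_j)\searrow\mathrm{cap}(\overline D,M)=\mathrm{cap}(\overline D)=0$ by hypothesis (i) and the definition of absolute capacity as the decreasing limit over the exhaustion, we get $\int_M|\nabla v_j|^2\to 0$. Setting $h_j:=v_j$ yields (ii); the only point to check is that $\mathrm{cap}(\overline D)=0$ forces $\mathrm{cap}(K)=0$ for the actual prescribed $D$, which follows since $K\subseteq D$ can be enlarged to a smooth $\overline{D}$ of the required type with $K\subseteq\mathrm{int}\,\overline D$, and capacity is monotone.

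For $(ii)\Rightarrow(iii)$, suppose $u\in C^0(M)\cap W^{1,2}_{loc}(M)$ is a bounded-above weak Neumann subsolution of the Laplace equation; normalizing, assume $\sup_M u=1$ and, arguing by contradiction, that $u$ is non-constant so that $u(x_0)<1$ at some interior point, hence on a relatively compact domain $D$ we have $u<1-2\delta$ for some $\delta>0$. Fix the sequence $\{h_j\}$ from (ii) associated with this $D$. The idea is the classical Ahlfors-type test function argument: for $\varepsilon>0$ small, consider $\varphi_j:=(u-(1-\varepsilon))_+\,h_j$, which is a nonnegative compactly supported $W^{1,2}$ function (compact support because $h_j$ has compact support, and we may further truncate using $h_j$). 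Using it in the weak Neumann subsolution inequality \eqref{subsol} — legitimate by the Remark allowing $W^{1,2}_0$ test functions — expanding $\nabla\varphi_j=h_j\nabla(u-(1-\varepsilon))_+ + (u-(1-\varepsilon))_+\nabla h_j$, and using that $(u-(1-\varepsilon))_+$ vanishes on $D$ (so no interaction with the region where $h_j=1$), one obtains
\[
\int_{\{u>1-\varepsilon\}} h_j\,|\nabla u|^2 \le -\int_{\{u>1-\varepsilon\}} (u-(1-\varepsilon))_+\,\langle\nabla u,\nabla h_j\rangle \le \varepsilon\Big(\int_M|\nabla u|^2\cdot\text{(local bound)}\Big)^{1/2}\Big(\int_M|\nabla h_j|^2\Big)^{1/2}.
\]
Letting $j\to\infty$ kills the right side (by (ii)), and since $h_j\nearrow 1$ on $\mathrm{int}M$ by monotone convergence $\int_{\{u>1-\varepsilon\}}|\nabla u|^2=0$ for every $\varepsilon>0$; a standard stratification over $\varepsilon\to 0$ then gives $\nabla u\equiv 0$ a.e. on $\{u=1\}^c\cap\,$(superlevel sets), and by connectedness $u\equiv 1$, contradicting $u(x_0)<1$. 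The main technical obstacle here is handling the cross term so that the Cauchy–Schwarz estimate is uniform in $j$: one must ensure $\int|\nabla u|^2$ is controlled on the relevant compact pieces — this is where the local $W^{1,2}$ regularity of $u$ together with the compact support of $h_j$ is essential, and one may need a preliminary Caccioppoli-type bound $\int_K|\nabla u|^2<\infty$ obtained by testing \eqref{subsol} against $(u-(1-\varepsilon))_+\psi^2$ with $\psi$ an ordinary cutoff.

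Finally, $(iii)\Rightarrow(i)$ is the contrapositive: suppose some compact $K$ has $\mathrm{cap}(K)>0$. Enlarge $K$ to a smooth relatively compact $\overline D$ of the transversal type with $K\subseteq\mathrm{int}\,\overline D$, so $\mathrm{cap}(\overline D)\ge\mathrm{cap}(K)>0$. Proposition \ref{equilibrium_potentials2} furnishes $u\in C(M)\cap W^{1,2}_{loc}\cap C^2(M\setminus\overline D)$ with $0\le u\le 1$, $u=1$ on $\overline D$, $u$ harmonic on $M\setminus\overline D$, $\partial u/\partial\nu=0$ on $\partial_1(M\setminus\overline D)$, $u$ a weak Neumann supersolution on $M$, and $\int_M|\nabla u|^2=\mathrm{cap}(\overline D)>0$; in particular $u$ is non-constant. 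Then $1-u$ is a bounded weak Neumann subsolution on $M$ which is non-constant, violating parabolicity as in Definition \ref{def_parab}. (One checks directly from the weak supersolution property of $u$ that $1-u$ satisfies \eqref{subsol} with the reversed sign convention, i.e. is a weak Neumann subsolution, and $\sup_M(1-u)=1<\infty$.) This contradiction establishes $(iii)\Rightarrow(i)$ and closes the cycle. I expect step $(ii)\Rightarrow(iii)$, specifically the uniform-in-$j$ control of the cross term via a Caccioppoli inequality adapted to the Neumann boundary condition, to be the main point requiring care; the other two implications are essentially bookkeeping on top of Propositions \ref{equilibrium_potentials} and \ref{equilibrium_potentials2}.
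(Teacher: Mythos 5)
Your implications (i)$\Rightarrow$(ii) and (iii)$\Rightarrow$(i) are essentially the paper's (for the first, each potential is furnished by Proposition \ref{equilibrium_potentials} applied to $(\overline D,\Omega_j)$, Proposition \ref{equilibrium_potentials2} being about the limit object, but that is cosmetic). The problem is your step (ii)$\Rightarrow$(iii), which the paper deliberately avoids in this form: it proves (ii)$\Rightarrow$(i) (immediate, since each $h_j$ is an admissible potential for $(\overline D,\Omega_j)$) and then (i)$\Rightarrow$(iii). Your key estimate
\[
\int_{\{u>1-\varepsilon\}}h_j\,|\nabla u|^2\;\le\;\varepsilon\Bigl(\int_{\operatorname{supp}\nabla h_j}|\nabla u|^2\Bigr)^{1/2}\Bigl(\int_M|\nabla h_j|^2\Bigr)^{1/2}
\]
does not close. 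The function $u$ is only in $W^{1,2}_{loc}(M)$, the sets $\operatorname{supp}\nabla h_j$ exhaust $M\setminus\overline D$, and a Caccioppoli bound on each fixed compact set gives a constant depending on that set; there is no uniformity in $j$, so the first factor may diverge exactly as fast as the second one vanishes. The preliminary local energy bound you propose cannot repair this, because what is needed is a \emph{global} bound $\nabla u\in L^2(M)$, which is false in general for a bounded subharmonic function on a parabolic manifold.

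The correct mechanism, in both standard routes, is that the cross term must be \emph{reabsorbed into the left-hand side} rather than estimated against an uncontrolled energy. Concretely: test \eqref{subsol} with $\varphi_j=(u-(1-\varepsilon))_+\,h_j^{\,2}$ (quadratic, not linear, in the cutoff). Then Young's inequality applied to $2\int(u-(1-\varepsilon))_+h_j\langle\nabla u,\nabla h_j\rangle$ gives
\[
\int_M h_j^{\,2}\,\bigl|\nabla (u-(1-\varepsilon))_+\bigr|^2\;\le\;4\varepsilon^2\int_M|\nabla h_j|^2\;\longrightarrow\;0,
\]
and since (ii) forces $h_j\nearrow1$ a.e.\ on the connected $M$ (the monotone limit $h$ has vanishing energy and equals $1$ on $D$), monotone convergence yields $\nabla(u-(1-\varepsilon))_+\equiv0$; hence $(u-(1-\varepsilon))_+$ is constant, equal to its value $0$ on $D$, contradicting $\sup_Mu=1$. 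No control of $\int|\nabla u|^2$ is needed anywhere. The paper's own (i)$\Rightarrow$(iii) achieves the same self-absorption logarithmically: with $v=\sup_Mu-u+1\ge1$ and test function $\varphi^2v^{-1}$ it obtains $\int_D|v^{-1}\nabla v|^2\le4\int|\nabla\varphi|^2$ and then takes the infimum over admissible $\varphi$ to get $4\operatorname{cap}(\overline D)=0$. With either of these substitutions your cycle (i)$\Rightarrow$(ii)$\Rightarrow$(iii)$\Rightarrow$(i) is sound.
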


\begin{proof}
(i) $\Rightarrow$ (ii). Assume first that $\mathrm{cap}(K)=0$ for every
compact set $K$ in $M$, let $D$ be as in (ii) and let $\Omega_{j}$ be an
increasing exhastion of $M$ by relatively compact open set with smooth
boundary transversal to $\partial M$ with $\overline D\subset\Omega_{1}$. For
every $j$ let $u_{j}$ be the equilibrium potential of the capacitor
$(\overline D, \Omega_{j})$, and extend $u_{j}$ to be $0$ off $\Omega_{j}$.
Then $u_{j}$ has the regularity properties listed in (ii), and, by
Proposition~\ref{equilibrium_potentials},
\[
\int|\nabla u_{j}|^{2} = \mathrm{cap}(\overline D, \Omega_{j}) \to
\mathrm{cap}(\overline D)=0.
\]

(ii) $\Rightarrow$ (i) Conversely, assume that (ii) holds. Clearly it suffices
to prove that $\mathrm{cap}(\overline D)=0 $ for every relatively compact open
domain $D$ with smooth boundary transversal to $\partial M$. Choose an
increasing exhaustion of $M$ by relatively compact domains $\Omega_{j}$ with
smooth boundary transversal to $\partial M$ such that $\mathrm{supp} u_{j}
\Subset\Omega_{j}$. Then
\[
\mathrm{cap} (\overline D) = \lim_{j} \mathrm{cap} (\overline D, \Omega_{j})
\leq\lim_{j} \int_{\Omega_{j}} |\nabla u_{j}|^{2} \to0,
\]
as required.

(i) $\Rightarrow$ (iii) Suppose that $\mathrm{cap}(K)= 0 $ for every compact
set in $M$, and let $u\in C^{0}(M)\cap W^{1,2}_{loc}(M)$ satisfy, in the weak
Neumann sense,
\begin{equation}
\label{weak_liou}%
\begin{cases}
\Delta u\geq0\\
\frac{\partial u}{\partial\nu}\leq0 \, \text{ on } \, \partial M\\
\sup_{M} u<+\infty.
\end{cases}
\end{equation}
Let $v=\sup_{M} u- u +1$, so that $v\geq1$ and, by definition of weak solution
of the differential problem \eqref{weak_liou}, $v$ satisfies
\[
\int\langle\nabla v, \nabla\rho\rangle\geq0 \quad\forall0\leq\rho\in
C^{0}(M)\cap W^{1,2}_{0}(M).
\]
Next, for every relatively compact domain $D$, let $\varphi\in Lip_{c}(M)$
with $\varphi=1$ on $D$, and $0\leq\varphi\leq1$. Using $\rho= \varphi^{2}
v^{-1}\in C^{0}(M) \cap W^{1,2}_{c}(M)$ as a test function we have
\[%
\begin{split}
0\leq\int\langle v, \nabla\rho\rangle &  = 2\int\varphi\langle v^{-1} \nabla
v, \nabla\varphi\rangle- \int\varphi^{2} |v^{-1}\nabla v |^{2}\\
&  \leq2\int\varphi|v^{-1}\nabla v| |\nabla\varphi| - \int\varphi^{2}
|v^{-1}\nabla v |^{2}.
\end{split}
\]
Rearranging, using Young's inequality $2ab \leq2a^{2} + \frac12 b^{2}$, and
recalling that $\varphi=1$ on $D$ we obtain
\[
\int_{D} |v^{-1}\nabla v |^{2}\leq4\int|\nabla\varphi|^{2},
\]
and taking the inf of the right hand side over all $Lip_{c}$ function
$\varphi$ which are equal to $1$ on $D$ we conclude that
\[
\int_{D} |v^{-1}\nabla v |^{2}\leq4\mathrm{cap}(\overline{D}) = 0
\]
Thus $v$ and therefore $u$ is constant on every relatively compact domain $D$.
Thus $u$ is constant on $M$, and $M$ is parabolic in the sense of
Definition~\ref{def_parab}.

(iii) $\Rightarrow$ (i) Assume by contradiction that there exists compact set
$K$ with nonzero capacity. Without loss of generality we can suppose that $K$
is the closure of a relatively compact open domain $D$ with smooth boundary
$\partial_{0}D$ transversal to $\partial M$. Let $u$ be the equilibrium
potential of $\overline D$ constructed in
Proposition~\ref{equilibrium_potentials2}, which is non-constant since the
capacity of $\overline D$ is positive. But then $u\in C^{0}(M)\cap W^{1,2}(M)$
is a non-constant bounded weak Neumann superharmonic function, contradicting
the assumed parabolicity of $M$.
\end{proof}

\section{Maximum principles \& height estimates\label{section-ahlfors}}

It is a classical result by L.V. Ahlfors that a Riemannian manifold $N$
(without boundary) is parabolic if and only if, for every domain $D\subseteq
N$ with $\partial D\neq\emptyset$ and for every bounded above, subharmonic
function $u$ on $D$ it holds that $\sup_{D}u=\sup_{\partial D}u$. The result
has been extended in the setting of $p$-parabolicity in \cite{PST}. This
section aims to provide a new form of the Ahlfors characterization which is
valid on manifolds with boundary. This, in turn, will be used to obtain
estimate of the height function of complete hypersurfaces with constant mean
curvature (CMC for short)\ immersed into product spaces of the form
$N\times\mathbb{R}$.

\subsection{Global maximum principles\label{subsection-ahlfors}}

We are going to prove the Ahlfors-type characterization of parabolicity stated
in Theorem \ref{th_intro_Ahlfors}. Actually, a version of this global maximum
principle involving the whole manifold and without any Neumann condition will
be crucial in the geometric applications. This is the content of Theorem
\ref{th_ahlfors-wholeM} that will be proved at the end of the section.

\begin{proof}
[Proof (of Theorem \ref{th_intro_Ahlfors})]Assume first that $M$ is parabolic
and suppose, by contradiction, that there exists a domain $D\subseteq M$ and a
function $u$ as in the statement of the Theorem, such that%
\[
\sup_{D}u>\sup_{\partial_{0}D}u.
\]
Let $\varepsilon>0$ be so small that%
\[
\sup_{D}u>\sup_{\partial_{0}D}u+\varepsilon.
\]
Then, the open set $D_{\varepsilon}=\left\{  x\in D:u>\sup_{D}u-\varepsilon
\right\}  \neq\emptyset$ satisfies $\overline{D}_{\varepsilon}\subset D$ and,
therefore,%
\[
u_{\varepsilon}=%
\begin{cases}
\max\left\{  u,\sup_{D}u-\varepsilon\right\}  & \text{on }D\\
\sup_{D}u-\varepsilon & \text{on }M\backslash D
\end{cases}
\]
well defines a $C^{0}\left(  M\right)  \cap W_{loc}^{1,2}\left(  M\right)
$-subsolution of the Laplace equation on $M$. Furthermore, $\sup
_{M}u_{\varepsilon}=\sup_{D}u<+\infty$. It follows from the very definition of
parabolicity that $u_{\varepsilon}$ is constant on $M$. In particular, if we
suppose to have chosen $\varepsilon>0$ in such a way that $\sup_{D}%
u-\varepsilon$ is not a local maximum for $u$, then $u_{\varepsilon}=\sup
_{D}u-\varepsilon$ on $\partial D_{\varepsilon}\neq\emptyset$ and we conclude%
\[
u\equiv\sup_{D}u-\varepsilon\text{, on }D,
\]
which is absurd.

Suppose now that, for every domain $D\subseteq M$ with $\partial_{0}%
D\neq\emptyset$ and for every $u\in C^{0}\left(  \overline{D}\right)  \cap
W_{loc}^{1,2}\left(  D\right)  $ satisfying, in the weak Neumann sense,%
\[
\left\{
\begin{array}
[c]{ll}%
\Delta u\geq0 & \text{on }D\\
\dfrac{\partial u}{\partial\nu}\leq0 & \text{on }\partial_{1}D\\
\sup_{D}u<+\infty, &
\end{array}
\right.
\]
it holds%
\[
\sup_{D}u=\sup_{\partial_{0}D}u.
\]
By contradiction assume that $M$ is not parabolic. Then, there exists a
non-constant function $v\in C^{0}\left(  M\right)  \cap W_{loc}^{1,2}\left(
M\right)  $ satisfying
\[
\left\{
\begin{array}
[c]{ll}%
\Delta v\geq0 & \text{on }M\\
\dfrac{\partial v}{\partial\nu}\leq0 & \text{on }\partial M\\
v^{\ast}=\sup_{M}v<+\infty. &
\end{array}
\right.
\]
Given $\eta<v^{\ast}$ consider the domain $\Omega_{\eta}=\{x\in M:v(x)>\eta
\}\neq\emptyset$. We can choose $\eta$ sufficiently close to $v^{\ast}$ in
such a way that $\mathrm{int}M\not \subseteq \Omega_{\eta}$. In particular,
$\partial\Omega_{\eta}\subseteq\left\{  v=\eta\right\}  $ and $\partial
_{0}\Omega_{\eta}\neq\emptyset$. \ Now, $v\in C^{0}\left(  \overline{\Omega
}_{\eta}\right)  \cap W_{loc}^{1,2}\left(  \Omega_{\eta}\right)  $ is a
bounded above weak Neumann subsolution on $\partial_{1}\Omega_{\eta}$.
Moreover,%
\[
\sup_{\partial_{0}\Omega_{\eta}}{v}=\eta<\sup_{\Omega_{\eta}}{v},
\]
contradicting our assumptions.
\end{proof}

\begin{remark}
If we take $D=M$ in the first half of the above proof then we immediately
realize that the Neumann boundary condition plays no role. This suggests the
validity of the following restricted form of the maximum principle that was
adopted by F.R. De Lima \cite{DeLi} as a definition of a weak notion of
parabolicity; see Appendix \ref{appendix-different}.
\end{remark}

\begin{proof}
[Proof (of Theorem \ref{th_ahlfors-wholeM})]If, by contradiction,%
\[
\sup_{M}u>\sup_{\partial M}u
\]
then, we can choose $\varepsilon>0$ so small that%
\[
\sup_{M}u>\sup_{\partial M}u-2\varepsilon.
\]
Define $u_{\varepsilon}\in C^{0}\left(  M\right)  \cap W_{loc}^{1,2}\left(
M\right)  $ by setting%
\[
u_{\varepsilon}=\left\{
\begin{array}
[c]{ll}%
\max\left(  u,\sup_{M}u-\varepsilon\right)  & \text{on }\Omega_{2\varepsilon
}\\
\sup_{M}u-\varepsilon & \text{on }M\backslash\Omega_{2\varepsilon},
\end{array}
\right.
\]
where we have set%
\[
\Omega_{2\varepsilon}=\left\{  x\in M:u\left(  x\right)  >\sup_{M}%
u-2\varepsilon\right\}  .
\]
Since $\overline{\Omega}_{2\varepsilon}\subset\mathrm{int}M$, we have that
$u_{\varepsilon}$ is constant in a neighborhood of $\partial M$. Since $\Delta
u \geq0 $ weakly on $\mathrm{int} M$, it follows that $u_{\epsilon}$ is a weak
Neumann subsolution on $M$. Moreover, $\sup_{M}u_{\varepsilon}=\sup
_{M}u<+\infty$ so that, by parabolicity, $u_{\varepsilon}\equiv\sup
_{M}u-\varepsilon$, a contradiction.
\end{proof}

\subsection{Height estimates for CMC hypersurfaces in product
spaces\label{section-heightestimates}}

We now present some applications of this global maximum principle to get
height estimates both for $H$-hypersurfaces with boundary in product spaces
and for $H$-graphs over manifolds with boundary. By an $H$-hypersurfaces of
$N\times\mathbb{R}$ we mean and oriented hypersurface $\Sigma$ with constant
mean curvature $H$ with respect to a choice of its Gauss map. An $H$-graph
over the $m$-dimensional Riemannian manifold $M$ with boundary $\partial
M\neq\emptyset$ is an embedded $H$-hypersurfaces given by $\Sigma=\Gamma
_{u}\left(  M\right)  $ where $\Gamma_{u}:M\rightarrow M\times\mathbb{R}$ \ is
defined, as usual, by $\Gamma_{u}\left(  x\right)  =\left(  x,u\left(
x\right)  \right)  $, for some smooth function $u:M\rightarrow\mathbb{R}$. The
downward (pointing) unit normal to $\Sigma$ is defined by
\[
\mathcal{N}=\frac{1}{\sqrt{1+\left\vert \nabla_{M}u\right\vert ^{2}}}\left(
\nabla_{M}u,-1\right)  .
\]
With respect to $\mathcal{N}$, the mean curvature of the graph writes as%
\[
H=-\frac{1}{m}\operatorname{div}_{M}\left(  \frac{\nabla_{M}u}{\sqrt
{1+\left\vert \nabla_{M}u\right\vert ^{2}}}\right)  .
\]
On the other hand, let \thinspace$M_{\Sigma}$ be the original manifold $M$
endowed with the metric pulled back from $M\times\mathbb{R}$ via $\Gamma_{u}$.
Then, it is well known that the mean curvature vector field of the isometric
immersion $\Gamma_{u}$%
\[
\mathbf{H}\left(  x\right)  =H\left(  x\right)  \mathcal{N}\left(  x\right)
\]
satisfies%
\[
\Delta_{\Sigma}\Gamma_{u}=m\mathbf{H},
\]
where $\Delta_{\Sigma}$ \ denotes the Laplacian on manifold-valued maps. Since
$\Delta_{\Sigma}$ is linear with respect to the Riemannian product structure
in the codomain, from the above we also get%
\begin{align}
\Delta_{\Sigma}u  &  =\frac{1}{\sqrt{1+\left\vert \nabla_{M}u\right\vert ^{2}%
}}\operatorname{div}_{M}\left(  \frac{\nabla_{M}u}{\sqrt{1+\left\vert
\nabla_{M}u\right\vert ^{2}}}\right) \label{lap-meancurv}\\
&  =-\frac{m}{\sqrt{1+\left\vert \nabla_{M}u\right\vert ^{2}}}H\left(
x\right) \nonumber
\end{align}

With this preparation, we begin by noting the following version of Lemma 1 in
\cite{LW}.

\begin{lemma}
\label{prop_height} Let $N$ be an $m$-dimensional complete manifold without
boundary and let $M\subset N$ be a closed domain with smooth boundary
$\partial M\neq\emptyset$. Consider a graph $\Sigma=\Gamma_{u}\left(
M\right)  \subset N\times\mathbb{R}$ over $M$ with smooth boundary%
\[
\partial\Sigma\subset M\times\left\{  0\right\}  .
\]
Assume that%
\[
\sup_{M}|u|+\sup_{M}|H|<+\infty.
\]
Then there exists a constant $C=C(m,\sup_{M}|u|,\sup_{M}|H|)>0$ such that, for
every $\delta>0$ and $R>1$,%
\[
\mathrm{vol}B_{R}^{\Sigma}\left(  \bar{p}\right)  \leq C\left(  1+\frac
{1}{\delta R}\right)  \mathrm{vol}\left(  M\cap B_{\left(  1+\delta\right)
R}^{N}\left(  \bar{x}\right)  \right)  ,
\]
where $\bar{x}$ is a reference point in $N$ and $\bar{p}=(\bar{x},u(\bar{x}%
))$. Moreover, the following estimate%
\[
\mathrm{vol}B_{R}^{\Sigma}\left(  \bar{p}\right)  \leq C\left\{
\mathrm{vol}B_{R}^{N}\left(  \bar{x}\right)  +\mathrm{Area}\left(  \partial
B_{R}^{N}\left(  \bar{x}\right)  \right)  \right\}
\]
holds for almost every $R>1$.
\end{lemma}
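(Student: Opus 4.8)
The plan is to transfer the estimate to the base manifold. Via the graph map $\Gamma_{u}$, identify $\Sigma$ with $M$ equipped with the pulled‑back metric $g+du\otimes du$, where $g$ is the metric induced from $N$; by the matrix–determinant identity its volume element is $W\,d\mathrm{vol}_{M}$ with $W=\sqrt{1+|\nabla_{M}u|^{2}}\ge 1$. Since $g+du\otimes du\ge g$ on $M$ and $d_{M}\ge d_{N}$ on $M$, distances only increase when passing from $N$ to $M$ and from $M$ to $\Sigma$, so, writing $\bar{x}$ for the base point corresponding to $\bar{p}$, the ball $B_{R}^{\Sigma}(\bar{p})$ sits, after this identification, inside $B_{R}^{M}(\bar{x})\subseteq M\cap B_{R}^{N}(\bar{x})$. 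Hence
\[
\mathrm{vol}\,B_{R}^{\Sigma}(\bar{p})=\int_{B_{R}^{\Sigma}(\bar{p})}W\,d\mathrm{vol}_{M}\le\int_{M\cap B_{R}^{N}(\bar{x})}W\,d\mathrm{vol}_{M},
\]
and the task becomes that of controlling the weighted measure $\int W$ of metric balls of $N$ intersected with $M$.

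The core is the following weighted estimate, which I claim holds for every $0\le\varphi\in Lip_{c}(N)$:
\[
\int_{M}\varphi^{2}W\,d\mathrm{vol}_{M}\le\bigl(1+m\sup_{M}|u|\,\sup_{M}|H|\bigr)\int_{M}\varphi^{2}+2\sup_{M}|u|\int_{M}\varphi\,|\nabla_{M}\varphi|.
\]
To prove it, set $X=\nabla_{M}u/W$, so that $|X|\le1$, $\langle\nabla_{M}u,X\rangle=W-W^{-1}$, and $\operatorname{div}_{M}X=-mH$ by \eqref{lap-meancurv}. I would apply the divergence theorem on the manifold with boundary $(M,g)$ to the compactly supported Lipschitz field $(\varphi^{2}u)X$: the resulting boundary integral over $\partial M$ carries the factor $\varphi^{2}u$, and this vanishes identically on $\partial M$ because $\partial\Sigma=\Gamma_{u}(\partial M)\subset M\times\{0\}$ forces $u\equiv0$ on $\partial M$. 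Thus $\int_{M}\langle\nabla_{M}(\varphi^{2}u),X\rangle=-\int_{M}\varphi^{2}u\,\operatorname{div}_{M}X=m\int_{M}\varphi^{2}uH$; expanding $\nabla_{M}(\varphi^{2}u)=\varphi^{2}\nabla_{M}u+2\varphi u\,\nabla_{M}\varphi$, substituting $\langle\nabla_{M}u,X\rangle=W-W^{-1}$, and bounding $W^{-1}\le1$, $|X|\le1$, $|u|\le\sup_{M}|u|$, $|H|\le\sup_{M}|H|$ gives the asserted inequality after rearrangement.

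To finish, choose cutoffs with $\varphi\equiv1$ on $B_{R}^{N}(\bar{x})$ and $0\le\varphi\le1$, so that $\int_{M\cap B_{R}^{N}(\bar{x})}W\le\int_{M}\varphi^{2}W$. For the first inequality take $\varphi$ a function of $r^{N}=d_{N}(\cdot,\bar{x})$ equal to $1$ on $B_{R}^{N}(\bar{x})$, $0$ outside $B_{(1+\delta)R}^{N}(\bar{x})$, affine in between, so $|\nabla_{M}\varphi|\le(\delta R)^{-1}$; bounding both $\int_{M}\varphi^{2}$ and $\delta R\int_{M}\varphi\,|\nabla_{M}\varphi|$ by $\mathrm{vol}\bigl(M\cap B_{(1+\delta)R}^{N}(\bar{x})\bigr)$ and setting $C=\max\{1+m\sup_{M}|u|\sup_{M}|H|,\,2\sup_{M}|u|\}$ yields the first claim. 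For the second, use $\varphi$ affine between $B_{R}^{N}(\bar{x})$ and $B_{R+h}^{N}(\bar{x})$, so $|\nabla_{M}\varphi|\le h^{-1}$ on that annulus; then $\int_{M}\varphi^{2}\le\mathrm{vol}\,B_{R+h}^{N}(\bar{x})$ and $\int_{M}\varphi\,|\nabla_{M}\varphi|\le h^{-1}\bigl(\mathrm{vol}\,B_{R+h}^{N}(\bar{x})-\mathrm{vol}\,B_{R}^{N}(\bar{x})\bigr)$, and letting $h\to0^{+}$ and using that $R\mapsto\mathrm{vol}\,B_{R}^{N}(\bar{x})$ is differentiable for a.e.\ $R$ with derivative $\mathrm{Area}\bigl(\partial B_{R}^{N}(\bar{x})\bigr)$ (co‑area formula / Lebesgue differentiation) gives the second claim for a.e.\ $R>1$ with the same $C$.

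The routine ingredients are the determinant computation of $W\,d\mathrm{vol}_{M}$, the density argument licensing Lipschitz cutoffs in the divergence theorem on a manifold with boundary (as used in the Remark following the definition of weak solution), the fact that $\varphi^{2}uX$ is compactly supported so that non‑compactness of $M$ is harmless, and the $h\to0$ limit. The one genuinely delicate point is the vanishing of the boundary integral: it is precisely the hypothesis $\partial\Sigma\subset M\times\{0\}$, i.e.\ $u|_{\partial M}\equiv0$, that kills the contribution of $\partial M$, so that no Neumann‑type condition on $u$ along $\partial M$ is needed; were $\partial\Sigma$ an arbitrary graph over $\partial M$, an extra term $\int_{\partial M}\varphi^{2}u\,\langle X,\nu\rangle$ would have to be controlled.
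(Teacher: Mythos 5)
Your proposal is correct and follows essentially the same route as the paper: the volume of $B_R^\Sigma(\bar p)$ is reduced to $\int_{M\cap B_R^N(\bar x)}\sqrt{1+|\nabla u|^2}$, and the key term is controlled by applying the divergence theorem to a cutoff times $u\,\nabla_M u/\sqrt{1+|\nabla_M u|^2}$, exploiting $u\equiv0$ on $\partial M$ to kill the boundary integral, then choosing the annulus cutoff and letting its width shrink to invoke the co-area formula. The only differences (using $\varphi^2$ in place of the paper's linear cutoff $\rho$, and phrasing the first reduction via the pulled-back metric rather than the projection $\Pi_N$) are cosmetic.
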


\begin{proof}
Note that%
\begin{align*}
d_{\Sigma}\left(  \left(  \bar{x},u\left(  \bar{x}\right)  \right)  ,\left(
x,u\left(  x\right)  \right)  \right)   &  \geq d_{N\times\mathbb{R}}\left(
\left(  \bar{x},u\left(  \bar{x}\right)  \right)  ,\left(  x,u\left(
x\right)  \right)  \right)  \\
&  \geq\max\left\{  d_{N}\left(  \bar{x},x\right)  +\left\vert u\left(
\bar{x}\right)  -u\left(  x\right)  \right\vert \right\}  .
\end{align*}
Set $\bar{p}=(\bar{x},u(\bar{x}))$. Therefore%
\begin{align*}
B_{R}^{\Sigma}\left(  \bar{p}\right)   &  \subseteq\Sigma\cap B_{R}%
^{N\times\mathbb{R}}\left(  \bar{p}\right)  \\
&  \subseteq(M\cap B_{R}^{N}\left(  \bar{x}\right)  )\times\left(  -R+u\left(
\bar{x}\right)  ,R+u\left(  \bar{x}\right)  \right)
\end{align*}
and it follows that%
\begin{align}
\mathrm{vol}B_{R}^{\Sigma}\left(  \bar{p}\right)   &  =\int_{\Pi_{N}\left(
B_{R}^{\Sigma}\left(  \bar{p}\right)  \right)  }\sqrt{1+\left\vert \nabla
u\right\vert ^{2}}d\mathrm{vol}_{N}\label{LW-volest1}\\
&  \leq\int_{M\cap B_{R}^{N}\left(  \bar{x}\right)  }\sqrt{1+\left\vert \nabla
u\right\vert ^{2}}d\mathrm{vol}_{N}\nonumber\\
&  =\int_{M\cap B_{R}^{N}\left(  \bar{x}\right)  }\frac{\left\vert \nabla
u\right\vert ^{2}}{\sqrt{1+\left\vert \nabla u\right\vert ^{2}}}%
d\mathrm{vol}_{N}+\int_{M\cap B_{R}^{N}\left(  \bar{x}\right)  }\frac{1}%
{\sqrt{1+\left\vert \nabla u\right\vert ^{2}}}d\mathrm{vol}_{N}\nonumber\\
&  \leq\int_{M\cap B_{R}^{N}\left(  \bar{x}\right)  }\frac{\left\vert \nabla
u\right\vert ^{2}}{\sqrt{1+\left\vert \nabla u\right\vert ^{2}}}%
d\mathrm{vol}_{N}+\mathrm{vol}(M\cap B_{R}^{N}\left(  \bar{x}\right)
).\nonumber
\end{align}
Here $\Pi_{N}:\Sigma\rightarrow N$ denotes the projection on the $N$ factor.
Now, for any $\delta>0$, we choose a cut-off function $\rho$ as follows:%
\[
\rho(x)=%
\begin{cases}
1 & \mathrm{on}\quad B_{R}(\bar{x})\\
\frac{\left(  1+\delta\right)  R-r(x)}{\delta R} & \mathrm{on}\quad B_{\left(
1+\delta\right)  R}(\bar{x})\backslash B_{R}(\bar{x})\\
0 & \mathrm{elsewhere},
\end{cases}
\]
where $r(x)$ denotes the distance function on $N$ from a reference point
$\bar{x}$. Since
\[
X=\rho u\frac{\nabla u}{\sqrt{1+\left\vert \nabla u\right\vert ^{2}}}%
\]
is a compactly supported vector field that vanishes on $\partial M$ and on
$\partial B_{\left(  1+\delta\right)  R}^{N}\left(  \bar{x}\right)  $, as an
application of the divergence theorem we get
\begin{align*}
0= &  \int_{M\cap B_{\left(  1+\delta\right)  R}^{N}\left(  \bar{x}\right)
}\mathrm{div}(X)d\mathrm{vol}_{N}\\
= &  -m\int_{M\cap B_{\left(  1+\delta\right)  R}^{N}\left(  \bar{x}\right)
}\rho Hud\mathrm{vol}_{N}+\int_{M\cap B_{\left(  1+\delta\right)  R}%
^{N}\left(  \bar{x}\right)  }\frac{\rho\left\vert \nabla u\right\vert ^{2}%
}{\sqrt{1+\left\vert \nabla u\right\vert ^{2}}}d\mathrm{vol}_{N}\\
&  -\frac{1}{\delta R}\int_{M\cap(B_{\left(  1+\delta\right)  R}^{N}\left(
\bar{x}\right)  \backslash B_{R}^{N}\left(  \bar{x}\right)  )}u\frac
{\langle\nabla u,\nabla r\rangle}{\sqrt{1+\left\vert \nabla u\right\vert ^{2}%
}}d\mathrm{vol}_{N}.
\end{align*}
Hence
\begin{align*}
\int_{M\cap B_{R}^{N}\left(  \bar{x}\right)  }\frac{\left\vert \nabla
u\right\vert ^{2}}{\sqrt{1+\left\vert \nabla u\right\vert ^{2}}}%
d\mathrm{vol}_{N}\leq &  \int_{M\cap B_{\left(  1+\delta\right)  R}^{N}\left(
\bar{x}\right)  }\frac{\rho\left\vert \nabla u\right\vert ^{2}}{\sqrt
{1+\left\vert \nabla u\right\vert ^{2}}}d\mathrm{vol}_{N}\\
\leq &  m\sup_{M}|u|\sup_{M}|H|\mathrm{vol}(M\cap B_{\left(  1+\delta\right)
R}^{N}\left(  \bar{x}\right)  )\\
+ &  \frac{\sup_{M}|u|}{\delta R}\mathrm{vol}(M\cap(B_{\left(  1+\delta
\right)  R}^{N}\left(  \bar{x}\right)  \backslash B_{R}^{N}\left(  \bar
{x}\right)  )).
\end{align*}
Inserting this latter into (\ref{LW-volest1}) gives, for every $R>1$,
\begin{align*}
\mathrm{vol}B_{R}^{\Sigma}\left(  \bar{p}\right)  \leq &  C\left\{
\mathrm{vol}(M\cap B_{R}^{N}\left(  \bar{x}\right)  )+\mathrm{vol}(M\cap
B_{\left(  1+\delta\right)  R}^{N}\left(  \bar{x}\right)  )\right.  \\
&  \left.  +\frac{1}{\delta R}\mathrm{vol}(M\cap(B_{\left(  1+\delta\right)
R}^{N}\left(  \bar{x}\right)  \backslash B_{R}^{N}\left(  \bar{x}\right)
))\right\}  .
\end{align*}
To conclude, we let $\delta\rightarrow0$ and we use the co-area formula.
\end{proof}

\begin{remark}
We note that, actually, the somewhat weaker conclusions%
\[
\mathrm{vol}B_{R}^{\Sigma}\left(  \bar{p}\right)  \leq C\left(  1+\frac
{1}{\delta}\right)  \mathrm{vol}\left(  M\cap B_{\left(  1+\delta\right)
R}^{N}\left(  \bar{x}\right)  \right)  ,
\]
and%
\[
\mathrm{vol}B_{R}^{\Sigma}\left(  \bar{p}\right)  \leq C\left\{
\mathrm{vol}B_{R}^{N}\left(  \bar{x}\right)  +R\mathrm{Area}\left(  \partial
B_{R}^{N}\left(  \bar{x}\right)  \right)  \right\}
\]
hold under the assumption
\[
\sup_{M}|uH|<+\infty.
\]
Indeed, to overcome the problem that $u$ can be unbounded, following the proof
in the minimal case $H\equiv0$, one can apply the divergence theorem to the
vector field
\[
X=\rho u_{\sqrt{2}R}\frac{\nabla u}{\sqrt{1+\left\vert \nabla u\right\vert
^{2}}},
\]
where $u_{R}$ is defined as
\[
u_{R}=%
\begin{cases}
-R & \text{if}\quad u(x)<-R\\
u(x) & \text{if}\quad|u(x)|<R\\
R & \text{if}\quad u(x)>R.
\end{cases}
\]

\end{remark}

\begin{remark}
It could be interesting to observe that, in certain situations, an improved
version of Lemma \ref{prop_height} can be obtained from the a-priori gradient
estimates due to N. Koreevar, X.-J. Wang and J. Spruck, \cite{Ko, Wa, Sp}. See
also \cite{RSS-JDG} where the injectivity radius assumption has been removed.
More precisely, we have the next simple result. We explicitly note that, with
respect to Lemma \ref{prop_height}, no assumption on $\partial\Sigma$ is
required. Moreover, the volume estimate involves the same radius $R>0$ without
any further contribution.
\end{remark}

\begin{lemma}
\label{lemma_volume}Let $\left(  N,g\right)  $ be a complete, $m$-dimensional
Riemannian manifold (without boundary) satisfying $Sec_{N}\geq-K$ and let
$M\subset N$ be a closed domain with smooth boundary $\partial M\neq\emptyset
$. Suppose we are given a vertically bounded graph $\Sigma_{\varepsilon
}=\Gamma_{u}\left(  \mathcal{U}_{\varepsilon}\left(  M\right)  \right)  $ with
bounded mean curvature $H$, parametrized over an $\varepsilon$-neighborhood
$\mathcal{U}_{\varepsilon}\left(  M\right)  $ of $M$. Let $\Sigma=\Gamma
_{u}\left(  M\right)  .$ Then, there exists a constant $C=C\left(m,
\varepsilon,H,K,\sup_{M}\left\vert u\right\vert ,\sup_{M}\left\vert
H\right\vert \right)  >0$ such that%
\[
\mathrm{vol}B_{R}^{\Sigma}\left(  \bar{p}\right)  \leq C\mathrm{vol}\left(
M\cap B_{R}^{N}\left(  \bar{x}\right)  \right)  ,
\]
for every $R>0$, where $\bar{x}\in\mathrm{int}M$ is a reference point and
$\bar{p}=\left(  \bar{x},u\left(  \bar{x}\right)  \right)  $.
\end{lemma}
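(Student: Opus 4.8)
The plan is to mimic the argument in Lemma \ref{prop_height}, the crucial improvement coming from the fact that the \emph{a-priori gradient estimates} of Korevaar--Wang--Spruck bound $|\nabla_{M}u|$ pointwise on $M$ in terms of the data $\varepsilon$, $K$, $\sup_{M}|u|$ and $\sup_{M}|H|$. Since the graph $\Sigma_{\varepsilon}=\Gamma_{u}(\mathcal U_{\varepsilon}(M))$ is defined over an $\varepsilon$-neighborhood of $M$, every point $x\in M$ sits at interior distance at least $\varepsilon$ from $\partial(\mathcal U_{\varepsilon}(M))$, so the interior gradient estimate applies uniformly on $M$: there is a constant $\Lambda=\Lambda(m,\varepsilon,K,\sup|u|,\sup|H|)$ with
\[
\sqrt{1+|\nabla_{M}u(x)|^{2}}\le\Lambda\qquad\text{for all }x\in M .
\]

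With this in hand the volume comparison is immediate. First I would record, exactly as in Lemma \ref{prop_height}, the inclusion $B_{R}^{\Sigma}(\bar p)\subseteq(M\cap B_{R}^{N}(\bar x))\times(u(\bar x)-R,u(\bar x)+R)$, which follows from $d_{\Sigma}\ge d_{N\times\mathbb R}\ge d_{N}$ on the $N$-factor. Then, since $d\mathrm{vol}_{\Sigma}=\sqrt{1+|\nabla_{M}u|^{2}}\,d\mathrm{vol}_{N}$ under the graph parametrization, I compute
\[
\mathrm{vol}\,B_{R}^{\Sigma}(\bar p)=\int_{\Pi_{N}(B_{R}^{\Sigma}(\bar p))}\sqrt{1+|\nabla_{M}u|^{2}}\,d\mathrm{vol}_{N}\le\Lambda\,\mathrm{vol}\bigl(M\cap B_{R}^{N}(\bar x)\bigr),
\]
and setting $C=\Lambda$ gives the claim for every $R>0$, with no auxiliary term and no enlargement of the radius.

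The only real point requiring care — and the step I expect to be the main obstacle in writing the details — is the precise invocation of the gradient estimate: one must check that the cited results (\cite{Ko, Wa, Sp}, or the injectivity-radius-free version in \cite{RSS-JDG}) indeed yield an \emph{interior} bound on $|\nabla_{M}u|$ that depends only on the listed quantities and on the \emph{lower} sectional curvature bound $Sec_{N}\ge -K$, applied on balls of radius $\varepsilon$ about points of $M$; the vertical boundedness of $\Sigma_{\varepsilon}$ supplies the needed $\sup|u|$ control, and the bounded mean curvature supplies $\sup|H|$. Once the constant $\Lambda$ is extracted, everything else is the elementary comparison above, and the absence of any condition on $\partial\Sigma$ is simply because no integration by parts (hence no boundary vector field) is used here.
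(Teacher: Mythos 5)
Your proposal is correct and follows essentially the same route as the paper: reduce to a uniform bound on $|\nabla_M u|$ via the projection inclusion $\Pi_N(B_R^{\Sigma}(\bar p))\subseteq M\cap B_R^N(\bar x)$, then obtain that bound from the Korevaar--Wang--Spruck interior gradient estimate applied on small balls around points of $M$ inside $\mathcal U_{\varepsilon}(M)$. The only detail the paper adds is that it applies Spruck's Theorem 1.1 to the nonnegative functions $\sup_M u-u$ or $u-\inf_M u$ (using the vertical boundedness exactly as you anticipated), on balls $B_{\varepsilon/2}^N(x)$.
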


\begin{proof}
Indeed, since%
\[
\mathrm{vol}B_{R}^{\Sigma}\left(  \bar{p}\right)  =\int_{\Pi_{N}\left(
B_{R}^{\Sigma}\left(  \bar{p}\right)  \right)  }\sqrt{1+\left\vert \nabla
u\right\vert ^{2}}d\mathrm{vol}_{N}\leq\int_{M\cap B_{R}^{N}\left(  \bar
{x}\right)  }\sqrt{1+\left\vert \nabla u\right\vert ^{2}}d\mathrm{vol}_{N},
\]
we have only to show that $\left\vert \nabla u\right\vert $ is uniformly
bounded on $M$. To this end, note that $u:\mathcal{U}_{\varepsilon}\left(
M\right)  \rightarrow\mathbb{R}$ is a bounded function defining a bounded mean
curvature graph $\Gamma_{u}\left(  \mathcal{U}_{\varepsilon
}\left(  M\right)  \right)  $. Therefore, we can apply Theorem 1.1 in
\cite{Sp} to either $w\left(  x\right)  =\sup_{M}u-u\left(  x\right)  \geq0$
or $w\left(  x\right)  =u\left(  x\right)  -\inf_{M}u\geq0$ and obtain that,
in fact, $\left\vert \nabla^{M}u\right\vert $ is uniformly bounded on every
ball $B_{\varepsilon/2}^{N}\left(  x\right)  \subset\mathcal{U}_{\varepsilon
}\left(  M\right)  $, with $x\in M$. This completes the proof.
\end{proof}

\bigskip

Lemma \ref{prop_height} allows to prove Theorem \ref{th_intro_hest_graph}
stated in the Introduction.

\begin{proof}
[Proof (of Theorem \ref{th_intro_hest_graph})]Observe first that, according to
Lemma \ref{prop_height}, since $N$ has quadratic volume growth, so has
$\Sigma$. In particular, by Theorem \ref{th_growth}, if we denote by
$M_{\Sigma}$ the original domain $M$ endowed with the metric pulled back from
$\Sigma$ via $\Gamma_{u}$, we conclude that $M_{\Sigma}$ is parabolic.
Consider now the real-valued function $w\in C^{0}\left(  M_{\Sigma}\right)
\cap C^{\infty}\left(  \mathrm{int}M_{\Sigma}\right)  $ defined by%
\[
w\left(  x\right)  =Hu\left(  x\right)  -\frac{1}{\sqrt{1+\left\vert \nabla
u\left(  x\right)  \right\vert ^{2}}}.
\]
Since $Ric_{N}\geq0$, it is well known that $w$ is subharmonic; see e.g.
\cite{AD}. Moreover, $w\leq0$ on $\partial M_{\Sigma}$ and $\sup_{M_{\Sigma}%
}w\leq H\sup_{M}u<+\infty$. It follows from Theorem \ref{th_ahlfors-wholeM}
that%
\[
\sup_{M_{\Sigma}}w=\sup_{\partial M_{\Sigma}}w\leq0
\]
and, therefore,%
\[
H\sup_{M}u-1\leq\sup_{M_{\Sigma}}w\leq0.
\]
This shows that $u\leq1/H$. To conclude the proof, observe that, by
(\ref{lap-meancurv}), $u\in C^{0}\left(  M_{\Sigma}\right)  \cap C^{\infty
}\left(  \mathrm{int}M_{\Sigma}\right)  $ is a superharmonic function.
Moreover, by assumption, $u$ is bounded and $u=0$ on $\partial M_{\Sigma}$.
Therefore, using again Theorem \ref{th_ahlfors-wholeM} in the form of a
minimum principle, we deduce%
\[
\inf_{M_{\Sigma}}u=\inf_{\partial M_{\Sigma}}u=0,
\]
proving that $u\geq0$.
\end{proof}

\begin{remark}
It is well known that, in case $\partial M=\emptyset$, the above volume growth
assumption implies that the vertically bounded $H$-graph must be necessarily
minimal, $H=0$. Actually, according to Theorem 5.1 in \cite{RS-Revista}, the
same conclusion holds if $\mathrm{vol}B_{R}\leq C_{1}e^{C_{2}R^{2}}$ for some
constants $C_{1},C_{2}>0$. Indeed, under this condition, the weak
maximum/minimum principle at infinity for the mean-curvature operator holds on
$M$. Therefore, there exists a sequence $x_{k}$ along which%
\[%
\begin{array}
[c]{ll}%
(a) & u\left(  x_{k}\right)  <\inf_{M}u+1/k\ \\
(b) & mH\equiv-\operatorname{div}((1+\left\vert \nabla_{M}u\right\vert
^{2}\left(  x_{k}\right)  )^{-1/2}\nabla_{M}u\left(  x_{k}\right)  )<1/k.
\end{array}
\]
This shows that $H\leq0$. In a similar fashion we obtain the opposite
inequality, proving that $H\equiv0$. The same conclusion was also obtained in
\cite{PRS-Pacific} by different methods.

On the other hand, if $\partial M=\emptyset$ and the volume growth of $M$ is
sub-quadratic then $M$ is parabolic with respect to the mean curvature
operator, \cite{RS-Revista}. Therefore, not only the $H$-graph is minimal, but
it must be a slice of $M\times\mathbb{R}$.
\end{remark}

\begin{remark}
Theorem \ref{th_intro_hest_graph} goes in the direction of generalizing
Theorem 4 in \cite{RoRo} by A. Ros and H. Rosenberg to non-homogeneous
domains. Indeed, assume that $m=2,3,4$ and $\mathrm{Sec}_{N}\geq0$. Then, for
every $\left\vert H\right\vert >0$, an $H$-graph $\Sigma=\Gamma_{u}(M)$ in
$N\times\mathbb{R}$ over a domain $M\subseteq N$, is necessarily bounded;
\cite{RoRo, Ch, ENR}. Furthermore, in case $m=2$, it follows by the
Bishop-Gromov comparison theorem that, if $\mathrm{Sec}_{N}\geq0$, then $N$
has quadratic volume growth, that is
\[
\mathrm{vol}B_{R}^{N}\left(  \bar{x}\right)  \leq\omega_{2} R^{2},
\]
where $\omega_{2}$ denotes the area of the unit ball in $\mathbb{R}^{2}$.
Moreover, if $N$ is complete, $\partial N =\emptyset$, then $\overline{M}$is a
complete parabolic manifold with boundary. Indeed, let $d_{M}$ and $d_{N}$
denote the intrinsic distance functions on $M$ and $N$, respectively. Clearly%
\begin{equation}
d_{M}\geq\left.  d_{N}\right\vert _{M\times M} \label{distances}%
\end{equation}
and $\left(  M,d_{M}\right)  $ is a complete metric space. Indeed, from
(\ref{distances}), any Cauchy sequence $\left\{  x_{k}\right\}  \subset\left(
M,d_{M}\right)  $ is Cauchy in the complete space $\left(  N,d_{N}\right)  $.
It follows that $x_{k}\overset{d_{N}}{\rightarrow}\bar{x}\in N$ as
$k\rightarrow+\infty$. Actually, since $M$ is a closed subset of $\left(
N,d_{N}\right)  $, we have $\bar{x}\in M$. To conclude that $x_{k}%
\overset{d_{M}}{\rightarrow}\bar{x}$, simply recall that the metric topology
on $M$ induced by $d_{M}$ is the original topology of $M$, i.e., the subspace
topology inherited from $N$. Moreover, since, by (\ref{distances}),%
\[
\mathrm{vol}B_{R}^{M}\left(  x\right)  \leq\mathrm{vol}\left(  B_{R}%
^{N}\left(  x\right)  \cap M\right)  \leq\mathrm{vol}\left(  B_{R}^{N}\left(
x\right)  \right)  ,
\]
for every $x\in M$, it follows that $M$ enjoys the same volume growth property
of $N$.

In light of the considerations above, Corollary \ref{coro_intro_hest_graph} is
now straightforward.
\end{remark}

We end this section, by considering the more general case of an oriented CMC
hypersurface in the Riemannian product $N\times\mathbb{R}$. Abstracting from
the previous arguments, and up to using more involved computations as in
\cite{AD}, we easily obtain the proof of Theorem \ref{th_intro_hest} stated in
the Introduction.

\begin{proof}
[Proof (of Theorem \ref{th_intro_hest})]Let $f:\Sigma^{m}\rightarrow
N^{m}\times\mathbb{R}$ be a complete, oriented $H$-hypersurface isometrically
immersed in $N\times\mathbb{R}$, and denote by $h$ the projection of the image
of $\Sigma$ on $\mathbb{R}$ under the immersion, that is, $h=\pi_{\mathbb{R}%
}\circ f$. Note that
\begin{equation}
\Delta_{\Sigma}h=n\cos\Theta H\leq0, \label{Lap h}%
\end{equation}
where, we recall, $\Theta\in\lbrack\frac{\pi}{2},\frac{3\pi}{2}]$ stands for
the angle between the Gauss map $\mathcal{N}$ and the vertical vector field
$\partial/\partial t$. Since, by Theorem \ref{th_growth}, $\Sigma$ is
parabolic and $h$ is a bounded below superharmonic function, we can apply the
Ahlfors maximum principle to get
\[
h\geq\inf_{\Sigma}h=\inf_{\partial\Sigma}h=0.
\]
Consider now the function $\varphi$ defined as
\[
\varphi=Hh+\cos\Theta.
\]
We know by Theorem 3.1 in \cite{AD} that $\varphi$ is subharmonic. Since it is
also bounded, applying again the Ahlfors maximum principle we conclude that
\[
Hh-1\leq\varphi\leq\sup_{\Sigma}\varphi=\sup_{\partial\Sigma}\varphi\leq0.
\]
We have thus shown that%
\[
0\leq\pi_{\mathbb{R}}\circ f\left(  x\right)  \leq\frac{1}{H},
\]
as required.
\end{proof}

\section{The $L^{2}$-Stokes theorem \& slice-type
results\label{section-stokes}}

In this section we prove the global divergence theorem stated in the
Introduction as Theorem \ref{th_intro_Stokes}. We also provide a somewhat
weaker form of this result which involves differential inequalities of the
type $\operatorname{div}X\geq f$; see Proposition \ref{propineq} below. This
latter, together with the Ahlfors maximum principle, is then applied to prove
slice-type results for hypersurfaces in product spaces and for graphs; see
Theorems \ref{th_intro_slice} and \ref{th_intro_slice_graphs} in the
Introduction. Actually, the graph-version of this result also requires a
Liouville-type theorem for the mean curvature operator on manifolds with
boundary, under volume growth conditions. This is modeled on \cite{RS-Revista}.

\subsection{Global divergence theorems\label{subsection-divergence}}

Recall that, for a given smooth, compactly supported vector field $X$ on an
oriented Riemannian manifold $M$ with boundary $\partial M\neq\emptyset$, the
ordinary Stokes theorem asserts that
\begin{equation}
\int_{M}\operatorname{div}X=\int_{\partial M}\langle X,\nu\rangle,
\label{stokes}%
\end{equation}
where $\nu$ is the exterior unit normal to $\partial M$. In particular, this
holds for every smooth vector field if $M$ is compact. The result still holds
if we relax the regularity conditions on $X$ up to interpret its divergence in
the sense of distributions. To be precise, we introduce the following definition.

\begin{definition}
\label{def_weakdiv}Let $X$ be a vector field on $M$ satisfying $X\in
L_{loc}^{1}(M)$ and $\left\langle X,\nu\right\rangle \in L_{loc}^{1}\left(
\partial M\right)  $. The \textit{distributional divergence} of $X$ is defined
by
\begin{equation}
(\operatorname{div}X,\varphi)=-\int_{M}\langle X,\nabla\varphi\rangle
+\int_{\partial M}\varphi\langle X,\nu\rangle, \label{weakdiv}%
\end{equation}
for every $\varphi\in C_{c}^{\infty}(M)$.
\end{definition}

\begin{remark}
\label{rem_divweak}The above definition extends trivially to $\varphi\in
Lip_{c}\left(  M\right)  $. Actually, more is true. Recall that, given a
domain $D\subseteq M$, $W_{0}^{1,p}\left(  D\right)  $ denotes the closure of
$C_{c}^{\infty}(D)$ in $W^{1,p}(D)$. Then, by a density argument, the previous
definition extends to every $\varphi\in C_{c}^{0}\left(  M\right)  \cap
W_{0}^{1,2}\left(  M\right)  $. Indeed, let $\varphi$ be such a function.
Then, we find an approximating sequence $\varphi_{n}\in C_{c}^{\infty}\left(
M\right)  $ such that $\varphi_{n}\rightarrow\varphi$ in $W^{1,2}\left(
M\right)  $, as $n\rightarrow+\infty$. Since $\mathrm{supp}\left(
\varphi\right)  $ is compact, we can assume that there exists a domain
$\Omega\subset\subset M$ such that $\mathrm{supp}\left(  \varphi_{n}\right)
\subset\Omega$, for every $n$. Moreover, a subsequence (still denoted by
$\varphi_{n}$) converges pointwise a.e. to $\varphi$. Let $c=\max
_{M}\left\vert \varphi\right\vert +1$ and define $\phi_{n}=f\circ\varphi
_{n}\in Lip_{c}\left(  M\right)  $ where%
\[
f\left(  t\right)  =\left\{
\begin{array}
[c]{ll}%
c, & t\geq c\\
t, & -c<t<c\\
-c, & t\leq-c.
\end{array}
\right.
\]
Note that $\left\{  \phi_{n}\right\}  $ is an equibounded sequence,
$\mathrm{supp}\left(  \phi_{n}\right)  \subset\Omega$ and, furthermore,
$\phi_{n}\rightarrow f\circ\varphi=\varphi$ in $W^{1,2}\left(  M\right)  $ and
pointwise a.e. in $M$. Therefore, evaluating (\ref{weakdiv}) along $\phi_{n}$,
taking limits as $n\rightarrow+\infty$ and using the dominated convergence
theorem completes the proof.
\end{remark}

Now, suppose also that $\operatorname{div}X\in L_{loc}^{1}(M)$. Then we can
write%
\[
(\operatorname{div}X,\varphi)=\int_{M}\varphi\operatorname{div}X
\]
and, therefore, from (\ref{weakdiv}) we get%
\[
\int_{M}\varphi\operatorname{div}X=-\int_{M}\langle X,\nabla\varphi
\rangle+\int_{\partial M}\varphi\langle X,\nu\rangle.
\]
In particular, if $X$ is compactly supported , by choosing $\varphi=1$ on the
support of $X$, we recover the Stokes formula (\ref{stokes}) for every
compactly supported vector field $X$ satisfying $X\in L_{loc}^{1}(M)$,
$\operatorname{div}X\in L_{loc}^{1}\left(  {M}\right)  $ and $\left\langle
X,\nu\right\rangle \in L_{loc}^{1}\left(  \partial M\right)  $.\bigskip

Note that, by similar reasonings, if the vector field $X\in L_{loc}^{1}\left(
M\right)  $ has a weak divergence $\operatorname{div}X\in L_{loc}^{1}\left(
M\right)  $ and $\left\langle X,\nu\right\rangle \in L_{loc}^{1}\left(
\partial M\right)  ,$ then, for every $\rho\in C_{c}^{0}\left(  M\right)  \cap
W_{0}^{1,2}\left(  M\right)  $, we have that $\operatorname{div}\left(  \rho
X\right)  \in L_{loc}^{1}\left(  M\right)  $. Moreover, as in the smooth case,%
\[
\int_{M}\operatorname{div}\left(  \rho X\right)  =\int_{M}\left\langle
\nabla\rho,X\right\rangle +\int_{M}\rho\operatorname{div}X.
\]
To see this, we take $\varphi\in C_{c}^{\infty}\left(  M\right)  $ and, using
(\ref{weakdiv}) in the form of Remark \ref{rem_divweak}, we compute%
\begin{align*}
\left(  \operatorname{div}\left(  \rho X\right)  ,\varphi\right)   &
=-\int_{M}\left\langle \rho X,\nabla\varphi\right\rangle +\int_{\partial
M}\rho\varphi\left\langle X,\nu\right\rangle \\
&  =-\int_{M}\left\langle X,\nabla\left(  \rho\varphi\right)  \right\rangle
+\int_{\partial M}\rho\varphi\left\langle X,\nu\right\rangle +\int_{M}%
\varphi\left\langle X,\nabla\rho\right\rangle \\
&  =\left(  \operatorname{div}X,\rho\varphi\right)  +\int_{M}\varphi
\left\langle X,\nabla\rho\right\rangle \\
&  =\int_{M}\left(  \rho\operatorname{div}X+\left\langle X,\nabla
\rho\right\rangle \right)  \varphi\\
&  =\left(  \rho\operatorname{div}X+\left\langle X,\nabla\rho\right\rangle
,\varphi\right)  .
\end{align*}
Whence, we conclude that%
\[
\operatorname{div}\left(  \rho X\right)  =\rho\operatorname{div}X+\left\langle
X,\nabla\rho\right\rangle \in L_{loc}^{1}\left(  M\right)
\]
as desired.

All these facts will be tacitly employed several times in the rest of the
Section.\bigskip

If $M$ is not compact, we can still prove a global version of Stokes theorem
for vector fields with prescribed asymptotic behavior at infinity. This is the
content of Theorem \ref{th_intro_Stokes}.\newline

\begin{proof}
[Proof (of Theorem \ref{th_intro_Stokes})]Suppose $M$ is parabolic. According
to Theorem~\ref{capacity_parabolicity} (ii) there exists an increasing
sequence of functions $\varphi_{n}\in C_{c}(M) \cap W^{1,2}(M)$ such that
$0\leq\varphi_{n}\leq1$ and
\[
\varphi_{n} \to1\,\, \text{ locally uniformly on }\, M \,\,\text{ and }
\int_{M} |\nabla\varphi_{n}|^{2} \to0.
\]
Consider now any vector field $X$ satisfying (\ref{KNR1}). Since $\varphi
_{n}X$ is compactly supported, applying the usual (weak) divergence theorem we
get%
\begin{equation}
\int_{M}\operatorname{div}\left(  \varphi_{n}X\right)  =\int_{\Omega_{n}%
}\operatorname{div}\left(  \varphi_{n}X\right)  =\int_{\partial_{1}\Omega_{n}%
}\varphi_{n}\left\langle X,\nu\right\rangle . \label{KNR2}%
\end{equation}
On the other hand%
\[
\int_{M}\operatorname{div}\left(  \varphi_{n}X\right)  =\int_{M}\left\langle
\nabla\varphi_{n},X\right\rangle +\int_{M}\varphi_{n}\operatorname{div}X,
\]
where%
\[
\left\vert \int_{M}\left\langle \nabla\varphi_{n},X\right\rangle \right\vert
\leq\left(  \int_{M}\left\vert \nabla\varphi_{n}\right\vert ^{2}\right)
^{\frac{1}{2}}\left(  \int_{M}\left\vert X\right\vert ^{2}\right)  ^{\frac
{1}{2}}\rightarrow0
\]
as $n\rightarrow+\infty$. Moreover%
\[
\int_{M}\varphi_{n}\operatorname{div}X=\int_{M}\varphi_{n}(\operatorname{div}%
X)_{+}-\int_{M}\varphi_{n}(\operatorname{div}X)_{-}%
\]
and%
\[
\int_{M}\varphi_{n}(\operatorname{div}X)_{+}\leq\int_{M}\varphi_{n}%
(\operatorname{div}X)_{-}+\int_{\partial_{1}\Omega_{n}}\varphi_{n}\left\langle
X,\nu\right\rangle -\int_{M}\left\langle \nabla\varphi_{n},X\right\rangle .
\]
Using the monotone convergence theorem and the fact that $0\leq\varphi_{n}%
\leq1$, we obtain
\[
\int_{M}(\operatorname{div}X)_{+}\leq\int_{M}(\operatorname{div}X)_{-}%
+\int_{\partial_{1}\Omega_{n}}\varphi_{n}\left\langle X,\nu\right\rangle
<+\infty.
\]
Hence $\operatorname{div}X\in L^{1}(M)$ and taking limits on both sides of
(\ref{KNR2}) completes the first part of the proof.

Conversely, assume that $M$ is not parabolic so that $M$ possesses a smooth,
finite, positive Green kernel, \cite{Gr1, GN}. We shall show that the global
Stokes theorem fails. To this end, choose an exhaustion $\left\{  \Omega
_{n}\right\}  $ of $M$ by smooth and relatively compact domains. Then, the
Neumann Green kernel $G\left(  x,y\right)  $ of $M$ is obtained as the limit
of the Green functions $G_{n}\left(  x,y\right)  $ of $\Omega_{n}$ which
satisfy%
\[
\left\{
\begin{array}
[c]{ll}%
\Delta G_{n}\left(  x,y\right)  =-\delta_{x}\left(  y\right)  & \text{on
}\Omega_{n}\cap\mathrm{int}M\\
\dfrac{\partial G_{n}}{\partial\nu}=0 & \text{on }\partial_{1}\Omega_{n}\\
G_{n}=0 & \text{on }\partial_{0}\Omega_{n}.
\end{array}
\right.
\]
Let $f\geq0$ be a smooth function compactly supported in $\mathrm{int}M$. For
each $n$ define%
\[
u_{n}\left(  x\right)  =\int_{\Omega_{n}}G_{n}\left(  x,y\right)  f\left(
y\right)  dy.
\]
Then, each $u_{n}$ is a positive, classical solution of the boundary value
problem%
\[
\left\{
\begin{array}
[c]{ll}%
\Delta u_{n}=-f & \text{on }\Omega_{n}\cap\mathrm{int}M\\
\dfrac{\partial u_{n}}{\partial\nu}=0 & \text{on }\partial_{1}\Omega_{n}\\
u_{n}=0 & \text{on }\partial_{0}\Omega_{n}.
\end{array}
\right.
\]
By the maximum principle and the boundary point lemma, the sequence is
monotonically {increasing} and converges to a solution $u$ of%
\[
\left\{
\begin{array}
[c]{ll}%
\Delta u=-f & \text{on }M\\
\dfrac{\partial u}{\partial\nu}=0 & \text{on }\partial M.
\end{array}
\right.
\]
Also, using Fatou Lemma,%
\[
\int_{M}\left\vert \nabla u_{n}\right\vert ^{2}\geq\int_{M}\left\vert \nabla
u\right\vert ^{2}.
\]
Now consider the vector field%
\[
X=\nabla u.
\]
Clearly $X$ satisfies all the conditions in (\ref{KNR1}). On the other hand,
we have%
\[
\int_{M}\operatorname{div}X=-\int_{M}f\neq0
\]
and%
\[
\int_{\partial M}\left\langle X,\nu\right\rangle =\int_{\partial M}%
\dfrac{\partial u}{\partial\nu}=0,
\]
proving that the global Stokes theorem fails to hold.
\end{proof}

Using Definition \ref{def_weakdiv} of weak divergence one could introduce the
notion of weak solution of a differential inequality like $\operatorname{div}%
X\geq f$. We stress that $\operatorname{div}X$ is not required to be a function.

\begin{definition}
\label{def_weak-sol-divX}Let $X\in L_{loc}^{1}\left(  M\right)  $ be a vector
field satisfying $\left\langle X,\nu\right\rangle \in L_{loc}^{1}\left(
\partial M\right)  $ and let $f\in L_{loc}^{1}\left(  M\right)  $. We say that
$\operatorname{div}X\geq f$ in the distributional sense on $M$ if%
\[
\left(  \operatorname{div}X,\varphi\right)  \geq\int_{M}f\varphi,
\]
for every $0\leq\varphi\in C_{c}^{\infty}\left(  M\right)  $. Actually,
according to Remark \ref{rem_divweak}, the definition extends to every
$0\leq\varphi\in C_{c}^{0}\left(  M\right)  \cap W^{1,2}\left(  M\right)  $.

In the special case where $f=0$ and $X=\nabla u$ for some $u\in W_{loc}%
^{1,2}\left(  M\right)  $ satisfying $\partial u/\partial\nu\in L_{loc}%
^{1}\left(  \partial M\right)  $, we obtain the corresponding notion of weak
solution of $\Delta u\geq0$ on $M$.
\end{definition}

Although elementary, it is important to realize that, as in the smooth
setting, the above definition is compatible with that of weak Neumann
\ subsolution given in the Introduction.

\begin{lemma}
\label{lemma_equiv-weak-def}Let $u\in W_{loc}^{1,2}\left(  M\right)  $ satisfy
$\partial u/\partial\nu\in L_{loc}^{1}\left(  \partial M\right)  $. Then $u$
is a weak Neumann subsolution of the Laplace equation provided $u$ satisfies%
\[
\left\{
\begin{array}
[c]{ll}%
\Delta u\geq0 & \text{on }M\medskip\\
\dfrac{\partial u}{\partial\nu}\leq0 & \text{on }\partial M,
\end{array}
\right.
\]
where the differential inequality is interpreted according to Definition
\ref{def_weak-sol-divX}.
\end{lemma}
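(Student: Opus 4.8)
The plan is to show that the two notions of ``weak solution'' coincide by unwinding the definitions and comparing the two integration-by-parts identities. Recall that being a weak Neumann subsolution means precisely that
\[
-\int_{M}\langle\nabla u,\nabla\varphi\rangle\geq0
\]
for every $0\leq\varphi\in C_{c}^{\infty}(M)$, while the hypothesis, interpreted via Definition \ref{def_weak-sol-divX} applied to $X=\nabla u$ and $f=0$, says that
\[
\bigl(\operatorname{div}\nabla u,\varphi\bigr)=-\int_{M}\langle\nabla u,\nabla\varphi\rangle+\int_{\partial M}\varphi\,\frac{\partial u}{\partial\nu}\geq 0
\]
for every such $\varphi$, together with $\partial u/\partial\nu\leq0$ on $\partial M$ in the distributional sense along $\partial M$. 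So the only gap between the two statements is the boundary term.

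First I would fix an arbitrary $0\leq\varphi\in C_{c}^{\infty}(M)$ and, using the Neumann inequality hypothesis, estimate
\[
-\int_{M}\langle\nabla u,\nabla\varphi\rangle\geq -\int_{\partial M}\varphi\,\frac{\partial u}{\partial\nu}.
\]
Since $\varphi\geq 0$ and, by assumption, $\partial u/\partial\nu\leq 0$ on $\partial M$ (this is the sign condition that is part of the mixed problem, understood weakly on the hypersurface $\partial M$, which is itself a manifold without boundary once we restrict), the right-hand side is $\geq 0$. Hence $-\int_{M}\langle\nabla u,\nabla\varphi\rangle\geq 0$, which is exactly \eqref{subsol}. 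As this holds for every admissible $\varphi$, $u$ is a weak Neumann subsolution in the sense of the Introduction.

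The only point requiring a little care — and the step I expect to be the main (minor) obstacle — is making precise the meaning of ``$\partial u/\partial\nu\leq 0$ on $\partial M$'' when $u$ is merely $W^{1,2}_{loc}$: one must observe that $\partial u/\partial\nu\in L^1_{loc}(\partial M)$ by hypothesis, so the boundary integral $\int_{\partial M}\varphi\,\partial u/\partial\nu$ is well defined for $\varphi\in C_c^\infty(M)$, and that the inequality $\partial u/\partial\nu\leq 0$ is to be read a.e.\ on $\partial M$ (equivalently, tested against $0\leq\psi\in C_c^\infty(\partial M)$, which by restricting compactly supported functions on $M$ is consistent). Granting this, the nonnegativity of the boundary term is immediate from the two sign conditions, and the converse implication plays no role here since the statement is one-directional. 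I would close by remarking that the argument is reversible when $\partial u/\partial\nu$ is genuinely a function, so in that regularity class the two notions are equivalent, matching the smooth case discussed after Definition \ref{def_weak-sol-divX}.
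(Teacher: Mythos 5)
Your argument is correct and coincides with the paper's own (one-line) proof: both simply unwind Definition \ref{def_weak-sol-divX} for $X=\nabla u$, $f=0$, to get $-\int_{M}\langle\nabla u,\nabla\varphi\rangle\geq-\int_{\partial M}\varphi\,\partial u/\partial\nu\geq0$ from the two sign conditions. The extra remarks on interpreting $\partial u/\partial\nu\leq0$ a.e.\ and on reversibility are harmless but not needed.
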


\begin{proof}
Straightforward from the equation%
\[
\left(  \Delta u,\varphi\right)  \overset{\mathrm{def}}{=}-\int_{M}%
\left\langle \nabla u,\nabla\varphi\right\rangle +\int_{\partial M}%
\dfrac{\partial u}{\partial\nu}\varphi,
\]
with $0\leq\varphi\in C_{c}^{\infty}\left(  M\right)  $.
\end{proof}

Reasoning as in the proof of Theorem \ref{th_intro_Stokes}, we can now prove
the following result which extends to manifolds with boundary a result in
\cite{HPV}.

\begin{proposition}
\label{propineq} Let $\left(  M,g\right)  $ be an $m$-dimensional, parabolic
manifold with smooth boundary $\partial M$. Let $X$ be a vector field on $M$
satisfying:%
\[
\text{(a) }\left\vert X\right\vert \in L^{2}\left(  M\right)  \text{; (b)
}0\geq\left\langle X,\nu\right\rangle \in L_{loc}^{1}\left(  \partial
M\right)  .
\]
Assume that $\operatorname{div}X\geq f$ for some $f\in L^{1}(M)$ in the sense
of distributions. Then
\[
\int_{M}f\leq\int_{\partial M}\langle X,\nu\rangle.
\]
{The same conclusion holds if $0\leq f\in L^{1}_{loc}(M)$ and yields
\[
f\equiv0.
\]
} Moreover, if $\operatorname{div}X\geq0$ in the distributional sense, then%
\[
\int_{M}\langle X,\nabla\alpha\rangle\leq\int_{\partial M}\alpha\left\langle
X,\nu\right\rangle
\]
for every $0\leq\alpha\in C_{c}^{\infty}\left(  M\right)  $.
\end{proposition}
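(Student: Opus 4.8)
The plan is to transcribe the first half of the proof of Theorem~\ref{th_intro_Stokes}, testing the distributional inequality $\operatorname{div}X\ge f$ against the cut-off functions furnished by parabolicity, rather than integrating the exact identity $\operatorname{div}(\varphi_n X)=\varphi_n\operatorname{div}X+\langle X,\nabla\varphi_n\rangle$ (which is unavailable here since $\operatorname{div}X$ need not be a function). Concretely, since $M$ is parabolic, by Theorem~\ref{capacity_parabolicity} (and a diagonal argument over an exhaustion, exactly as in the proof of Theorem~\ref{th_intro_Stokes}) we may fix an increasing sequence $\{\varphi_n\}\subset C_c(M)\cap W^{1,2}(M)$ with $0\le\varphi_n\le1$, $\varphi_n\to1$ locally uniformly on $M$, and $\int_M|\nabla\varphi_n|^2\to0$. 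By Remark~\ref{rem_divweak} each $\varphi_n$ is an admissible test function in Definition~\ref{def_weak-sol-divX}, so from $\operatorname{div}X\ge f$ and $\varphi_n\ge0$ we get
\[
\int_M f\varphi_n\;\le\;(\operatorname{div}X,\varphi_n)=-\int_M\langle X,\nabla\varphi_n\rangle+\int_{\partial M}\varphi_n\langle X,\nu\rangle .
\]

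First I would kill the interior term: by Cauchy--Schwarz and hypothesis (a),
\[
\Bigl|\int_M\langle X,\nabla\varphi_n\rangle\Bigr|\le\Bigl(\int_M|X|^2\Bigr)^{1/2}\Bigl(\int_M|\nabla\varphi_n|^2\Bigr)^{1/2}\longrightarrow 0 .
\]
For the boundary term I would exploit that $\langle X,\nu\rangle\le0$ by (b): the functions $-\varphi_n\langle X,\nu\rangle\ge0$ increase to $-\langle X,\nu\rangle$ pointwise on $\partial M$, so by monotone convergence $\int_{\partial M}\varphi_n\langle X,\nu\rangle\to\int_{\partial M}\langle X,\nu\rangle\in[-\infty,0]$, a well-defined quantity precisely because of the sign. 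When $f\in L^{1}(M)$, dominated convergence (dominating function $|f|$) gives $\int_M f\varphi_n\to\int_M f$, and passing to the limit in the displayed inequality yields $\int_M f\le\int_{\partial M}\langle X,\nu\rangle$, which is the first assertion (in particular the right-hand side is finite, so $\langle X,\nu\rangle\in L^{1}(\partial M)$).

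For $0\le f\in L^{1}_{loc}(M)$ I would instead simply discard the boundary term, which is $\le0$: the displayed inequality now gives $\int_M f\varphi_n\le\varepsilon_n$ with $\varepsilon_n:=\bigl|\int_M\langle X,\nabla\varphi_n\rangle\bigr|\to0$, and since $0\le f\varphi_n$ increases pointwise to $f$, monotone convergence forces $\int_M f\le0$, hence $f\equiv0$ a.e. Finally, the last statement is immediate on unwinding Definition~\ref{def_weak-sol-divX}: for $0\le\alpha\in C^{\infty}_{c}(M)$ the hypothesis $\operatorname{div}X\ge0$ reads $0\le(\operatorname{div}X,\alpha)=-\int_M\langle X,\nabla\alpha\rangle+\int_{\partial M}\alpha\langle X,\nu\rangle$, that is, $\int_M\langle X,\nabla\alpha\rangle\le\int_{\partial M}\alpha\langle X,\nu\rangle$; here $\alpha\langle X,\nu\rangle\in L^{1}(\partial M)$ because $\alpha$ is compactly supported, and no parabolicity is needed for this part.

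The main obstacle is not a genuine one: the argument is a routine adaptation of Theorem~\ref{th_intro_Stokes}. The only points demanding care are (i) checking that the cut-offs $\varphi_n$, being merely $C_c\cap W^{1,2}$ rather than smooth, are legitimate test functions for the distributional inequality --- which is exactly the content of Remark~\ref{rem_divweak} --- and (ii) the bookkeeping with one-sided, possibly infinite integrals: one must verify that the sign conditions on $\langle X,\nu\rangle$ and on $f$ make $\int_{\partial M}\langle X,\nu\rangle$ and $\int_M f$ meaningful, so that the monotone and dominated convergence steps apply and the resulting inequalities are not vacuous.
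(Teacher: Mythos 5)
Your argument is correct and follows the paper's own proof essentially verbatim: both test the distributional inequality $\operatorname{div}X\geq f$ against the equilibrium-potential cut-offs supplied by parabolicity, kill the interior term by Cauchy--Schwarz together with $|X|\in L^{2}$, and pass to the limit using Fatou/monotone/dominated convergence exactly as you describe, exploiting the sign of $\langle X,\nu\rangle$. Your remark that the final assertion is an immediate unwinding of Definition \ref{def_weak-sol-divX} applied to the test function $\alpha$ itself (so that neither the cut-offs nor parabolicity are needed there) is a correct, slight simplification of the paper's treatment, which instead runs the same limiting argument with the test functions $\varphi_{n}\alpha$.
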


\begin{proof}
Choose a smooth, relatively compact exhaustion $\Omega_{n}\subset M$ and
denote by $\varphi_{n}$ the equilibrium potential of the capacitor
$(\overline{\Omega}_{0},\Omega_{n})$. Extend $\varphi_{n}$ to be identically
$1$ on $\Omega_{0}$ and identically $0$ on $M\backslash\Omega_{n}$. Then, by
assumption,%
\begin{align*}
\int_{M}\varphi_{n}f  &  \leq\left(  \operatorname{div}X,\varphi_{n}\right) \\
&  =-\int_{M}\langle X,\nabla\varphi_{n}\rangle+\int_{\partial M}\varphi
_{n}\langle X,\nu\rangle\\
&  \leq\left(  \int_{M}|X|^{2}\right)  ^{\frac{1}{2}}\left(  \int_{M}%
|\nabla\varphi_{n}|^{2}\right)  ^{\frac{1}{2}}+\int_{\partial M}\varphi
_{n}\langle X,\nu\rangle.
\end{align*}
The first part of the statement follows by taking the $\limsup$ as
$n\rightarrow+\infty$ and applying the Fatou Lemma and either the monotone
convergence theorem if $0\leq f\in L^{1}_{loc}(M)$ or the dominated
convergence theorem if $f\in L^{1}(M)$. For what concern the second part,
consider the test function $\eta=\varphi_{n}\alpha$. Then,
\begin{align*}
0  &  \leq\left(  \operatorname{div}X,\alpha\varphi_{n}\right) \\
&  =-\int_{M}\alpha\langle X,\nabla\varphi_{n}\rangle-\int_{M}\varphi
_{n}\langle X,\nabla\alpha\rangle+\int_{\partial M}\alpha\varphi_{n}\langle
X,\nu\rangle\\
&  \leq\sup_{M}|\alpha|\left(  \int_{M}|X|^{2}\right)  ^{\frac{1}{2}}\left(
\int_{M}|\nabla\varphi_{n}|^{2}\right)  ^{\frac{1}{2}}-\int_{M}\varphi
_{n}\langle X,\nabla\alpha\rangle+\int_{\partial M}\alpha\varphi_{n}\langle
X,\nu\rangle.
\end{align*}
and the conclusion follows as above computing the $\limsup$ as $n\rightarrow
+\infty$.
\end{proof}

\subsection{Slice-type theorems for hypersurfaces in a half-space}

This Section is devoted to the proofs of Theorems \ref{th_intro_slice} and
\ref{th_intro_slice_graphs} stated in the Introduction. The first one of these
results involves a complete hypersurface\ $\Sigma$ contained in the half-space
$N\times\lbrack0+\infty)$ of the ambient product space $N\times\mathbb{R}$. It
is assumed that the boundary $\partial\Sigma\neq\emptyset$ lies in the slice
$N\times\left\{  0\right\}  $ and that $\Sigma$ has non-positive mean
curvature $H\leq0$ with respect to the \textquotedblleft
downward\textquotedblright Gauss map. The result states that, under a
quadratic area growth assumption on $\Sigma$ and regardless of the geometry of
$N$, the portion of the hypersurface $\Sigma$ in any upper-halfspace of
$N\times\mathbb{R}$ must have infinite volume unless $\Sigma$ is contained in
the totally geodesic slice $N\times\left\{  0\right\}  $. The second result
provides a graphical version of this theorem when $\Sigma=\Gamma_{u}\left(
M\right)  $. If $M$ satisfies a quadratic volume growth assumption, then each
superlevel set $M_{t}=\left\{  u\geq t>0\right\}  \subseteq M$ has infinite
volume unless $\Sigma$ is contained in the totally geodesic slice
$M\times\left\{  0\right\}  $. Note that $M_{t}$ is the orthogonal projection
of $\Sigma\cap\lbrack t,+\infty)$ on the slice $M\times\left\{  0\right\}
$.\bigskip

Let us begin with the

\begin{proof}
[Proof (of Theorem \ref{th_intro_slice})]Suppose that $\Sigma$ is not
contained in the slice $N\times\left\{  0\right\}  $. If the height function
$h$ on $\Sigma$ is bounded from above (for the precise definition of $h$ see
the proof of Theorem \ref{th_intro_hest} in Subsection
\ref{section-heightestimates}) the parabolicity of $\Sigma$ in the form of the
Ahlfors maximum principle implies that
\[
h\leq\sup_{\Sigma}h=\sup_{\partial\Sigma}h=0.
\]
The conclusion is then immediate because, by assumption, $\Sigma$ is contained
in the half-space $N\times\lbrack0,+\infty)$. Suppose now that $\sup_{\Sigma
}h=+\infty$, so that $\Sigma\cap N\times\left\{  t\right\}  \neq\emptyset$ for
an arbitrary $t>0$. Letting%
\[
\Sigma_{t}=\Sigma\cap N\times\lbrack t,+\infty)=\left\{  p\in\Sigma:h\left(
p\right)  \geq t\right\}  ,
\]
and since $\mathrm{vol}\left(  \Sigma_{t}\right)  \geq\mathrm{vol}\left(
\Sigma_{s}\right)  $, for every $s\geq t$, we can assume that $\mathrm{vol}%
\left(  \Sigma_{t}\right)  <+\infty$ for every $t>>1$. Moreover, by Sard
theorem we can suppose that $t$ is a regular value of $\left.  h\right\vert
_{\mathrm{int}\Sigma}$. In particular, $\Sigma_{t}$ is a smooth complete
hypersurface with boundary $\partial\Sigma_{t}=\left\{  p\in\Sigma:h\left(
p\right)  =t\right\}  $ and exterior unit normal $\nu_{t}=-\nabla h/|\nabla
h|$. Clearly, $\Sigma_{t}$ is parabolic because it has finite volume.
According to (\ref{Lap h}), $h$ is a subharmonic function on $\Sigma_{t}$ and
satisfies $\left\vert \nabla h\right\vert \leq1$. In particular, $\left\vert
\nabla h\right\vert \in L^{2}\left(  \Sigma_{t}\right)  $. For any
$\varepsilon>0$ define%
\[
h_{\varepsilon}=\max\left\{  h,t+\varepsilon\right\}  .
\]
Then $h_{\varepsilon}$ is again subharmonic on $M_{t}$, it has finite Dirichet
energy $\left\vert \nabla h_{\varepsilon}\right\vert \in L^{2}\left(
\Sigma_{t}\right)  $ and, furthermore, $\partial h_{\varepsilon}/\partial
\nu=0$ on $\partial\Sigma_{t}$. Therefore, we can apply Proposition
\ref{propineq} and deduce that $h_{\varepsilon}$ has to be harmonic on
$\Sigma_{t}$. Actually, since $h_{\varepsilon}$ is bounded from below on the
parabolic manifold $\Sigma_{t}$ it follows that $h_{\varepsilon}$ is constant
on every connected component of $\Sigma_{t}$. Whence, on noting that
$h_{\varepsilon}=t+\varepsilon$ on $\partial\Sigma_{t}$ we obtain that $t\leq
h\leq t+\varepsilon$ on $\Sigma_{t}$. Since this holds for every
$\varepsilon>0$ we conclude that $h\equiv t$ on $\Sigma_{t}$, contradicting
the assumption of $h$ being unbounded.
\end{proof}

The proof of Theorem \ref{th_intro_slice_graphs} is completely similar but
requires some preparation. The next Liouville-type result for the mean
curvature operator is adapted from \cite{RS-Revista}; see also \cite{CY-CPAM,
Ch-Manuscripta}. We provide a detailed proof for the sake of completeness.

\begin{theorem}
\label{th_areagrowth_meancurvop}Let $(M,g)$ be a complete Riemannian manifold
with boundary $\partial M\neq\emptyset$. If, for some reference point
$o\in\mathrm{int}M$,%
\begin{equation}
\frac{1}{{\mathrm{Area}}\left(  {\partial}_{0}{B_{R}}\left(  o\right)
\right)  }\notin L^{1}(+\infty), \label{area-varphipar}%
\end{equation}
then the following holds. Let $u\in C^{1}\left(  M\right)  $ be a weak Neumann
solution of the problem%
\begin{equation}
\left\{
\begin{array}
[c]{ll}%
\operatorname{div}\left(  \dfrac{\nabla u}{\sqrt{1+\left\vert \nabla
u\right\vert ^{2}}}\right)  \geq0 & \text{on }M\medskip\\
\dfrac{\partial u}{\partial\nu}\leq0 & \text{on }\partial M\medskip\\
\sup_{M}u<+\infty. &
\end{array}
\right.  \label{parab-meancurv}%
\end{equation}
Then $u\equiv\mathrm{const}.$
\end{theorem}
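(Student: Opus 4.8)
The plan is to run the Caccioppoli-type argument for the mean curvature operator from \cite{RS-Revista}, adapted to the boundary setting, with a test function that genuinely couples a radial cut-off with $u$ itself. Set
\[
X=\frac{\nabla u}{\sqrt{1+|\nabla u|^{2}}},
\]
so that $|X|<1$ and, pointwise on $M$,
\[
|X|^{2}=\frac{|\nabla u|^{2}}{1+|\nabla u|^{2}}\le\frac{|\nabla u|^{2}}{\sqrt{1+|\nabla u|^{2}}}=\langle X,\nabla u\rangle .
\]
By the very definition of a weak Neumann solution of \eqref{parab-meancurv}, extended to the test class $C^{0}_{c}(M)\cap W^{1,2}(M)$ exactly as in Remark \ref{rem_divweak}, one has $\int_{M}\langle X,\nabla\varphi\rangle\le 0$ for every $0\le\varphi\in C^{0}_{c}(M)\cap W^{1,2}(M)$. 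The strategy is to feed in $\varphi=\psi_{R}^{2}(u-\lambda)_{+}$, where $\lambda<\sup_{M}u$ and $\psi_{R}$ is the radial cut-off associated with the area-growth function.

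Concretely, fix $R_{0}>0$, put $\gamma(r)=\int_{R_{0}}^{r}\big(\mathrm{Area}(\partial_{0}B_{s}^{M}(o))\big)^{-1}\,ds$, which by \eqref{area-varphipar} increases to $+\infty$, and let $\psi_{R}\in C^{0}_{c}(M)\cap W^{1,2}(M)$, $0\le\psi_{R}\le1$, be equal to $1$ on $B_{R_{0}}^{M}(o)$, to $1-\gamma(r)/\gamma(R)$ on $B_{R}^{M}(o)\setminus B_{R_{0}}^{M}(o)$, and to $0$ off $B_{R}^{M}(o)$. Using $|\nabla r|=1$ a.e.\ on $\mathrm{int}\,M$ and the co-area formula, exactly as in the discussion preceding Theorem \ref{th_growth}, one computes $\int_{M}|\nabla\psi_{R}|^{2}=1/\gamma(R)$. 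Since $(u-\lambda)_{+}\le\sup_{M}u-\lambda$ on its support and $\nabla\varphi=2\psi_{R}(u-\lambda)_{+}\nabla\psi_{R}+\psi_{R}^{2}\chi_{\{u>\lambda\}}\nabla u$, the inequality $\int_{M}\langle X,\nabla\varphi\rangle\le0$ gives, writing $I_{R}:=\int_{\{u>\lambda\}}\psi_{R}^{2}\langle X,\nabla u\rangle\ge0$,
\[
I_{R}\le 2(\sup_{M}u-\lambda)\int_{\{u>\lambda\}}\psi_{R}\,|X|\,|\nabla\psi_{R}| .
\]
By Cauchy--Schwarz, the pointwise bound $|X|^{2}\le\langle X,\nabla u\rangle$, and $\int_{M}|\nabla\psi_{R}|^{2}=1/\gamma(R)$,
\[
I_{R}\le 2(\sup_{M}u-\lambda)\Big(\int_{\{u>\lambda\}}\psi_{R}^{2}|X|^{2}\Big)^{1/2}\Big(\int_{M}|\nabla\psi_{R}|^{2}\Big)^{1/2}\le \frac{2(\sup_{M}u-\lambda)}{\sqrt{\gamma(R)}}\,I_{R}^{1/2},
\]
whence $I_{R}\le 4(\sup_{M}u-\lambda)^{2}/\gamma(R)$.

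Letting $R\to+\infty$, since $\gamma(R)\to+\infty$ we get $I_{R}\to0$, while Fatou's lemma yields $\int_{\{u>\lambda\}}|\nabla u|^{2}/\sqrt{1+|\nabla u|^{2}}\le\liminf_{R}I_{R}=0$. Hence this integral vanishes and, $u$ being of class $C^{1}$, its non-negative continuous integrand is identically zero on the open set $\{u>\lambda\}$, i.e.\ $\nabla u\equiv0$ there. As $\lambda<\sup_{M}u$ is arbitrary and $\bigcup_{\lambda<\sup_{M}u}\{u>\lambda\}=M$, we conclude $\nabla u\equiv0$ on $M$, so that $u$ is constant on the connected manifold $M$. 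I expect the main obstacle to be exactly the design of the test function: a bare radial cut-off only produces $I_{R}\lesssim(R-R_{0})/\gamma(R)$, which need not vanish under \eqref{area-varphipar}; it is the factor $(u-\lambda)_{+}$, together with the inequality $|X|^{2}\le\langle X,\nabla u\rangle$, that feeds a copy of $I_{R}^{1/2}$ back to the right-hand side and forces the decay. A secondary technical point is that $u$ is only assumed bounded above, so one cannot test directly with $\psi_{R}^{2}(\sup_{M}u-u)$; this is circumvented by working on super-level sets $\{u>\lambda\}$ and letting $\lambda$ decrease.
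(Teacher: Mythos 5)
Your argument is correct, and it reaches the conclusion by a genuinely different route than the paper. The paper argues by contradiction: after normalizing $u\le 0$ it tests the inequality against $\rho\,\mathrm{e}^{u}$, where $\rho$ is a thin linear annular cut-off, lets the annulus width tend to zero via the co-area formula to get a sphere-to-ball inequality, applies Cauchy--Schwarz on $\partial_{0}B_{R}(o)$ using $\mathrm{e}^{u}\Phi(|\nabla u|)\le 1$, and integrates the resulting differential inequality $H'(R)/H(R)^{2}\ge \mathrm{Area}(\partial_{0}B_{R}(o))^{-1}$ to force $H(R_{0})=0$. You instead work directly, testing against $\psi_{R}^{2}(u-\lambda)_{+}$ with the capacity-optimal cut-off of energy $1/\gamma(R)$, and absorb via the pointwise inequality $|X|^{2}\le\langle X,\nabla u\rangle$; the role played in the paper by the exponential weight $\mathrm{e}^{u}$ (which converts boundedness above into a uniform bound on the weight) is played in your proof by the truncation $(u-\lambda)_{+}\le\sup_{M}u-\lambda$, and the ODE-integration step is replaced by the explicit quantitative decay $I_{R}\le 4(\sup_{M}u-\lambda)^{2}/\gamma(R)$. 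Your version is closer in spirit to the capacity argument in the proof of Theorem \ref{capacity_parabolicity} and has the merit of exhibiting an explicit rate; the paper's version has the merit of not requiring the cut-off $\psi_{R}$ to be well defined. That is the one point you should address: your construction presupposes that $s\mapsto\mathrm{Area}(\partial_{0}B_{s}^{M}(o))^{-1}$ is locally integrable on $[R_{0},+\infty)$, so that $\gamma(R)<+\infty$ and $\psi_{R}$ is a continuous $W^{1,2}$ function; if this fails, \eqref{area-varphipar} holds trivially but $\psi_{R}$ degenerates. This is easily repaired (replace the integrand by $\min\{n,\mathrm{Area}(\partial_{0}B_{s}^{M}(o))^{-1}\}$, for which the same energy bound $\int_{M}|\nabla\psi_{R,n}|^{2}\le 1/\gamma_{n}(R)$ holds, and choose $R_{n}$ with $\gamma_{n}(R_{n})\to+\infty$ using monotone convergence), but as written it is a small gap relative to the paper's argument, which handles this case with no extra work. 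A last cosmetic remark: your closing claim that a bare radial cut-off ``cannot work'' is not quite accurate, since the paper does use one and compensates by the ODE trick; but this is motivational commentary and does not affect the proof.
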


\begin{remark}
As already pointed out for the Laplace-Beltrami operator, being a weak Neumann
solution of $\operatorname{div}((1+\left\vert \nabla u\right\vert ^{2}%
)^{-1/2}\nabla u))\geq0$ means that%
\begin{equation}
-{\displaystyle\int_{M}} \left\langle \dfrac{\nabla u}{\sqrt{1+\left\vert
\nabla u\right\vert ^{2}}},\nabla\varphi\right\rangle \geq0,
\label{weak_sub_meancurv}%
\end{equation}
for every $0\leq\varphi\in C_{c}^{\infty}\left(  M\right)  $. Actually, it is
obvious that the same definition extends to any elliptic operator of the form
$L_{\Phi}\left(  u\right)  =\operatorname{div}(\Phi(\left\vert \nabla
u\right\vert )\nabla u)$, where $\Phi\left(  t\right)  $ is subjected to
certain structural conditions. Moreover, under the assumption%
\[
|\nabla u| \in L_{loc}^{1}\left(  \partial M\right)  ,
\]
this definition is also coherent with the notion of weak divergence. Namely
$u$ satisfies (\ref{weak_sub_meancurv}) provided $\left(  \operatorname{div}%
X,\varphi\right)  \geq0$ and $\partial u/\partial\nu\leq0$, where we have set
$X=(1+\left\vert \nabla u\right\vert ^{2})^{-1/2}\nabla u$. This follows
immediately from the equation%
\[
\left(  \operatorname{div}X,\varphi\right)  \overset{\mathrm{def}}%
{=}-{\displaystyle\int_{M}} \left\langle \dfrac{\nabla u}{\sqrt{1+\left\vert
\nabla u\right\vert ^{2}}},\nabla\varphi\right\rangle +\int_{\partial M}%
\frac{\varphi}{\sqrt{1+\left\vert \nabla u\right\vert ^{2}}}\frac{\partial
u}{\partial\nu}.
\]

\end{remark}

\begin{remark}
\label{rmk_areagrowth_meancurvop} If we take $\Phi\left(  t\right)  =1$ in the
argument below we recover Theorem \ref{th_growth} by Grigor'yan, in the form
of a Liouville result for $C^{1}(M)$ subsolutions of the Laplace equation.
\end{remark}

\begin{proof}
Let $u$ be as in the statement of the theorem and assume, by contradiction,
that $u$ is non-constant on the ball $B_{R_{0}}(o)$, for some $R_{0}>0$.
Without loss of generality we can suppose that $u\leq0$ on $M$. Define%
\[
\Phi\left(  t\right)  =\frac{1}{\sqrt{1+t^{2}}}.
\]
Now, having fixed $R>R_{0}$ and $\varepsilon>0$, we choose $\rho
=\rho_{\varepsilon,R}$ as follows:%
\[
\rho(x)=%
\begin{cases}
1 & \mathrm{on}\quad B_{R}(o)\\
\frac{R+\varepsilon-r(x)}{\varepsilon} & \mathrm{on}\quad B_{R+\varepsilon
}(o)\backslash B_{R}(o)\\
0 & \mathrm{elsewhere}.
\end{cases}
\]
Inserting the test function $\varphi=\rho\mathrm{e}^{u}$ into
(\ref{weak_sub_meancurv}) and elaborating we get%
\begin{align*}
0  &  \leq-\int_{M}\langle\Phi(|\nabla u|)\nabla u,\nabla(\rho\mathrm{e}%
^{u})\rangle\\
&  =-\int_{M}\mathrm{e}^{u}\Phi(|\nabla u|)\left\langle \nabla u,\nabla
\rho\right\rangle -\int_{M}\rho\mathrm{e}^{u}\Phi(|\nabla u|)\left\vert \nabla
u\right\vert ^{2}.
\end{align*}
Then, on noting also that $\partial M$ has measure zero, we have%
\[
\varepsilon^{-1}\int_{(B_{R+\varepsilon}(o)\backslash B_{R}(o))\cap
\mathrm{int}M}\mathrm{e}^{u}\Phi(|\nabla u|)\langle\nabla u,\nabla
r\rangle\geq\int_{B_{R}(o)\cap\mathrm{int}M}\mathrm{e}^{u}\Phi(|\nabla
u|)\left\vert \nabla u\right\vert ^{2}.
\]
Using the co-area formula and letting $\varepsilon\rightarrow0$ we get, for
a.e. $R>R_{0}$,%
\[
\int_{\partial_{0}B_{R}\left(  o\right)  }\mathrm{e}^{u}\Phi(|\nabla
u|)\langle\nabla u,\nabla r\rangle\geq\int_{B_{R}(o)\cap\mathrm{intM}%
}\mathrm{e}^{u}\Phi(|\nabla u|)|\nabla u|^{2}.
\]
On the other hand, using the Cauchy-Schwartz and H\"{o}lder inequalities, we
obtain%
\begin{align*}
\int_{\partial_{0}B_{R}\left(  o\right)  }\!\!\!\!\mathrm{e}^{u}\Phi(|\nabla
u|)\langle\nabla u,\nabla r\rangle &  \leq\int_{\partial_{0}B_{R}\left(
o\right)  }\mathrm{e}^{u}\Phi(|\nabla u|)\left\vert \nabla u\right\vert \\
&  \leq\left(  \int_{\partial_{0}B_{R}\left(  o\right)  }\!\!\!\!
\mathrm{e}^{u}\Phi(|\nabla u|)\right)  ^{\frac{1}{2}}\left(  \int
_{\partial_{0}B_{R}\left(  o\right)  }\!\!\!\! \mathrm{e}^{u}\Phi(|\nabla
u|)\left\vert \nabla u\right\vert ^{2}\right)  ^{\frac{1}{2}}\\
&  \leq\mathrm{Area}(\partial_{0}B_{R}(o))^{\frac{1}{2}}\left(  \int
_{\partial_{0}B_{R}\left(  o\right)  }\mathrm{e}^{u}\Phi(|\nabla u|)\left\vert
\nabla u\right\vert ^{2}\right)  ^{\frac{1}{2}}.
\end{align*}
Now, set
\[
H(R)=\int_{B_{R}(o)\cap\mathrm{intM}}\mathrm{e}^{u}\Phi(|\nabla u|)\left\vert
\nabla u\right\vert ^{2},
\]
Then, by the co-area formula and the previous inequalities,
\[
\frac{H^{\prime}(R)}{H(R)^{2}}\geq\frac{1}{\mathrm{Area}(\partial_{0}%
B_{R}(o))}.
\]
Integrating this latter on $[R_{0},R]$ and letting $R\rightarrow+\infty$ we
conclude%
\[
H(R_{0})\leq\frac{1}{\int_{R_{0}}^{+\infty}\mathrm{Area}(\partial_{0}%
B_{R}(o))^{-1}}=0,
\]
proving that%
\[
\int_{B_{R_{0}}(o)\cap\mathrm{intM}}\mathrm{e}^{u}\Phi(|\nabla u|)\left\vert
\nabla u\right\vert ^{2}=0.
\]
Therefore, $u$ must be constant on $B_{R_{0}}(o)$, leading to a contradiction.
\end{proof}

We are now ready to prove the slice theorem for graphs.

\begin{proof}
[Proof (of Theorem \ref{th_intro_slice_graphs})]Let $\Sigma=\Gamma_{u}\left(
M\right)  $, with $u\in C^{0}\left(  M\right)  \cap C^{\infty}\left(
\mathrm{int}M\right)  $, and for every $s\in\mathbb{R}$ define%
\[
M_{s}:=\{x\in M:u(x)\geq s\}.
\]
By the assumption on $\partial\Sigma=\Gamma_{u}\left(  \partial M\right)  $,
there exists $t>0$ such that $M_{t}\subset\mathrm{int}M$ and $\mathrm{vol}%
(M_{t})<+\infty$. Assume that $M_{t}\neq\emptyset$ for, otherwise, as in
Theorem \ref{th_intro_slice}, the proof is easier. We claim that $u$ is
constant on $M_{t}$. Indeed, by contradiction, suppose that this is not the
case. Then, by Sard Theorem, we can choose $t<c<\sup_{M}u$ such that $c$ is a
regular value of $\left.  u\right\vert _{\mathrm{int}M}$. Thus, the closed
subset $M_{c}$ is a complete manifold with boundary $\partial M_{c}%
\neq\emptyset$ and exterior unit normal $\nu_{c}=-\nabla u/|\nabla u|$. In
particular, as a complete manifold with finite volume, $M_{c}$ is parabolic.
Since the smooth function $u$ satisfies%
\[
\operatorname{div}\left(  \dfrac{\nabla_{M}u}{\sqrt{1+|\nabla_{M}u|^{2}}%
}\right)  =-mH\geq0\text{, on }M_{c}%
\]
then, having fixed any $\varepsilon>0$, the same differential inequality holds
for%
\[
u_{\varepsilon}=\max\left\{  u,c+\varepsilon\right\}  ;
\]
see e.g. \cite{PS-Fortaleza}. Note also that $\partial u_{\varepsilon
}/\partial\nu=0$ on $\partial M_{c}$. Summarizing, the vector field%
\[
X_{\varepsilon}=\dfrac{\nabla_{M}u_{\varepsilon}}{\sqrt{1+|\nabla
_{M}u_{\varepsilon}|^{2}}}%
\]
satisfies
\[
\left\{
\begin{array}
[c]{ll}%
\operatorname{div}_{M}X_{\varepsilon}\geq0 & \text{on }M_{c}\\
1\geq\left\vert X_{\varepsilon}\right\vert \in L^{2}\left(  M_{c}\right)  & \\
{0=\left\langle X_{\varepsilon},\nu_{c}\right\rangle }. &
\end{array}
\right.
\]
By applying Proposition \ref{propineq} we deduce that $\operatorname{div}%
_{M}X=0$ on $M_{c}$, i.e., $\Sigma_{c}=\Gamma_{u}\left(  M_{c}\right)  $ is a
minimal graph. Actually, since $\mathrm{vol}\left(  M_{c}\right)  <+\infty$,
by Theorem \ref{th_areagrowth_meancurvop} we get that $u_{\varepsilon}$ must
be constant on every connected component of $M_{c}$. Since $u_{\varepsilon
}=c+\varepsilon$ on $\partial M_{c}$ it follows that $c\leq u\leq
c+\varepsilon$ on $M_{c}$. Whence, using the fact that $\varepsilon>0$ was
chosen arbitrarily, we conclude that $u\equiv c$ on $M_{c}$. This contradicts
the fact that $c$ is a regular value of $u$, and the claim is proved.

Since $u$ is constant on $M_{t}$ we have that $\sup_{M}u<+\infty$. We now
distinguish three cases.

(a) Suppose that $\partial\Sigma=\partial M\times\left\{  0\right\}  $ and
$\Sigma\subset\lbrack0,+\infty)$. This means that $u\geq0$ with $u=0$ on
$\partial M$. In this case the conclusion $u\equiv0$ follows exactly as in
proof of Theorem \ref{th_intro_slice}.

(b) Suppose that $\Sigma$ is real analytic, i.e., it is described by a real
analytic function $u$. Since $u$ is constant on the open set $\left\{
u<c\right\}  $ we must conclude that $u$ is constant everywhere.

(c) Suppose that $\cos\widehat{\mathcal{N}_{0}\mathcal{N}}\leq0$ on
$\partial\Sigma=\Gamma_{u}\left(  \partial M\right)  $. This means that
$\partial u/\partial\nu\leq0$ on $\partial M$. \ The desired conclusion
follows by a direct application of Theorem \ref{th_areagrowth_meancurvop}.
\end{proof}

The following corollary is a straightforward consequence of the above proof.

\begin{corollary}
Let $\left(  M,g\right)  $ be a complete manifold with boundary $\partial M$
and assume that $\mathrm{vol}M<+\infty$. Let $\Sigma=\Gamma_{u}\left(
M\right)  $ be a graph with non-positive mean curvature $H\left(  x\right)  $
with respect to the downward Gauss map $\mathcal{N}$. Assume also that the
angle $\theta$ between the Gauss map $\mathcal{N}$ of the graph $\Sigma$ and
the Gauss map $\mathcal{N}_{0}=\left(  -\nu,0\right)  $ of $\partial
M\times\left\{  t\right\}  \hookrightarrow M\times\left\{  t\right\}  $
satisfies $\theta\in\lbrack-\frac{\pi}{2},\frac{\pi}{2}]$. Then $\Sigma$ is a
horizontal slice of $M\times\mathbb{R}$.
\end{corollary}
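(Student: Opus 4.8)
The plan is to deduce the statement in one shot from Proposition \ref{propineq} (the weak divergence theorem with a one-sided Neumann condition), applied to a single, carefully chosen vector field on $M$ itself, without ever passing to superlevel sets. The key observation is that $\mathrm{vol}\,M<+\infty$ already supplies all the potential theory we need: since $R/\mathrm{vol}\,B^{M}_{R}(o)\ge R/\mathrm{vol}\,M\notin L^{1}(+\infty)$, Theorem \ref{th_growth} gives that $M$ is parabolic, and $M$ has smooth boundary by hypothesis, so Proposition \ref{propineq} is available on $M$.

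Next I would record the analytic translations of the two geometric hypotheses. Writing the downward Gauss map as $\mathcal{N}=(1+|\nabla_{M}u|^{2})^{-1/2}(\nabla_{M}u,-1)$, the condition $H(x)\le 0$ means $\operatorname{div}_{M}\!\bigl(\nabla_{M}u/\sqrt{1+|\nabla_{M}u|^{2}}\bigr)=-mH\ge 0$ on $M$, and, computing $\cos\theta=\langle\mathcal{N},\mathcal{N}_{0}\rangle=-(1+|\nabla_{M}u|^{2})^{-1/2}\,\partial u/\partial\nu$, the angle condition $\theta\in[-\tfrac{\pi}{2},\tfrac{\pi}{2}]$ is exactly $\partial u/\partial\nu\le 0$ on $\partial M$. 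Now fix any smooth, strictly increasing, bounded function $\phi\colon\mathbb{R}\to(0,\infty)$ (for instance $\phi(t)=2+\arctan t$) and put
\[
Y:=\phi(u)\,\frac{\nabla_{M}u}{\sqrt{1+|\nabla_{M}u|^{2}}}.
\]
Since $u$ is smooth, $Y$ is a smooth vector field whose distributional divergence (Definition \ref{def_weakdiv}) coincides with the classical one, so that
\[
\operatorname{div}_{M}Y=\phi'(u)\,\frac{|\nabla_{M}u|^{2}}{\sqrt{1+|\nabla_{M}u|^{2}}}\;-\;m\,\phi(u)\,H .
\]

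Then I would verify the hypotheses of Proposition \ref{propineq} for $Y$: (a) $|Y|\le\sup|\phi|$, hence $|Y|\in L^{2}(M)$ because $\mathrm{vol}\,M<+\infty$; (b) on $\partial M$ we have $\langle Y,\nu\rangle=\phi(u)(1+|\nabla_{M}u|^{2})^{-1/2}\,\partial u/\partial\nu\le 0$ since $\phi>0$ and $\partial u/\partial\nu\le 0$, and this is continuous, hence in $L^{1}_{loc}(\partial M)$; (c) both summands in the displayed formula for $\operatorname{div}_{M}Y$ are nonnegative ($\phi'>0$; and $\phi>0$, $H\le 0$), so $\operatorname{div}_{M}Y\ge 0$, and it is continuous, hence in $L^{1}_{loc}(M)$. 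Applying Proposition \ref{propineq} with $f:=\operatorname{div}_{M}Y\ge 0$ forces $\operatorname{div}_{M}Y\equiv 0$; a sum of nonnegative terms that vanishes has each term zero, so the first summand vanishes identically, and since $\phi'>0$ this yields $|\nabla_{M}u|\equiv 0$ on $M$. Therefore $u$ is constant and $\Sigma=\Gamma_{u}(M)$ is a horizontal slice of $M\times\mathbb{R}$.

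The main obstacle — and the reason for composing with the auxiliary function $\phi$ rather than using the obvious field $X=\nabla_{M}u/\sqrt{1+|\nabla_{M}u|^{2}}$ — is that $u$ is \emph{not} assumed bounded. With the bare $X$ one only extracts $H\equiv 0$ and $\partial u/\partial\nu\equiv 0$ on $\partial M$ (i.e.\ $\Sigma$ is a minimal graph meeting each slice orthogonally), and then a direct appeal to the Liouville theorem \ref{th_areagrowth_meancurvop} stalls because that result needs $\sup_{M}u<+\infty$; likewise, imitating the superlevel-set step in the proof of Theorem \ref{th_intro_slice_graphs} on $M_{c}=\{u\ge c\}$ runs into the same boundedness gap, together with the technical nuisance that $\partial M_{c}$ acquires corners along $\{u=c\}\cap\partial M$. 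Inserting the bounded factor $\phi(u)$ circumvents all of this at once: it keeps the test field in $L^{2}$ regardless of the size of $u$, preserves the correct sign of the boundary term, and turns the Leibniz expansion of $\operatorname{div}_{M}Y$ into a sum of manifestly nonnegative terms. The only genuinely routine points that remain are the integrability and sign checks listed above and the (standard, since $u$ is smooth) identification of the weak and classical divergences of $Y$.
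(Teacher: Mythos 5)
Your proof is correct, but it follows a genuinely different route from the paper's. The paper disposes of this corollary as ``a straightforward consequence'' of the proof of Theorem \ref{th_intro_slice_graphs}: one first runs the superlevel-set argument on $M_{c}=\{u\geq c\}$ (every superlevel set has finite volume since $\mathrm{vol}\,M<+\infty$) to conclude via Proposition \ref{propineq} and Theorem \ref{th_areagrowth_meancurvop} that $u$ is constant there, hence $\sup_{M}u<+\infty$, and then applies the Liouville theorem \ref{th_areagrowth_meancurvop} on all of $M$ using $\partial u/\partial\nu\leq 0$ (case (c)). You instead make a single application of Proposition \ref{propineq} to the field $Y=\phi(u)\nabla_{M}u/\sqrt{1+|\nabla_{M}u|^{2}}$ with $\phi$ bounded, positive and strictly increasing: the boundedness of $\phi$ gives $|Y|\in L^{2}$ from $\mathrm{vol}\,M<+\infty$ alone, the positivity preserves the sign of the boundary term, and the monotonicity makes $\operatorname{div}Y$ a sum of two nonnegative terms whose forced vanishing yields $\nabla u\equiv 0$ directly. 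All the hypothesis checks (parabolicity from Theorem \ref{th_growth}, the translation of the angle condition into $\partial u/\partial\nu\leq 0$, the sign and integrability conditions (a)--(c)) are accurate. What your approach buys is a self-contained one-step argument that never needs $\sup_{M}u<+\infty$ and bypasses both the superlevel-set step (with its corner issues where $\{u=c\}$ meets $\partial M$, which the paper's terse remark glosses over) and Theorem \ref{th_areagrowth_meancurvop} entirely; it is in fact the same device ($\mathrm{e}^{u}$ versus $2+\arctan u$) that powers the proof of that Liouville theorem, deployed earlier so that boundedness of $u$ never has to be established. The paper's route, by contrast, reuses machinery already set up for the more general Theorem \ref{th_intro_slice_graphs}, where only one superlevel set is assumed to have finite volume.
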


\appendix

\section{Different notions of parabolicity \& some remarks on minimal
graphs\label{appendix-different}}

Let $M$ be a Riemannian manifold without boundary $\partial M=\emptyset$.
Then, from the stochastic viewpoint, $M$ is called parabolic if the Brownian
motion $X_{t}$ on $M$ is \textit{recurrent}, that is $X_{t}$ enters infinitely
many times a fixed compact set with probability $1$. As recorded in the survey
paper \cite{Gr2}, the recurrence of the Brownian motion for manifolds without
boundary can be characterized in terms of fundamental solutions to the Laplace
equation, maximum principles for superharmonic functions, capacities, heat
kernel, Liouville properties for certain Schr\"{o}dinger equations, volume
growth conditions, function theoretic tests (Khas'minskii criterion), $L^{2}%
$-Stokes theorems (Kelvin-Nevanlinna-Royden criterion) and many other
geometric and potential-theoretic properties.

If $M$ has non-empty boundary $\partial M\neq\emptyset$, a quick check at the
literature shows that there are many (non-equivalent) definitions of
parabolicity. The most classical one, which is also the one we have adopted
throughout the paper, was systematically used by A. Grigor'yan starting from
\cite{Gr, Gr1}, and states that $M$ is parabolic provided the reflected
Brownian motion on $M$ is recurrent. This is equivalent to require the
following Liouville-type property, which imposes Neumann-type boundary
conditions on relevant functions. Namely,

\begin{definition}
\label{def-parab-neumann}A Riemannian manifold $M$ with $\partial
M\neq\emptyset$ is $\mathcal{N}$-parabolic if the only solution of the problem%
\begin{equation}
\left\{
\begin{array}
[c]{ll}%
\Delta u\geq0 & \text{on }M\\
\dfrac{\partial u}{\partial\nu}\leq0 & \text{on }\partial M\\
\sup_{M}u<+\infty &
\end{array}
\right.  \label{parab1}%
\end{equation}
is the constant function $u\equiv\sup_{M}u$.
\end{definition}

Most of the geometric and functional-analytic characterizations of
$\mathcal{N}$-parabolicity of manifolds without boundary have already been
extended to the reflected Brownian motion; see \cite{Gr, Gr1, Gr2}. Two
remarkable exceptions were represented by the $L^{2}$-Stokes theorem and the
Ahlfors-type maximum principles, which are some of the main topics of the
present paper.

A second interesting definition can be found in a paper by R. F. De Lima,
\cite{DeLi}, who was interested in maximum principles at infinity for CMC
surfaces. His definition is oriented in the direction of the classical Ahlfors
maximum principle characterization of parabolic manifolds without boundary.
Apparently there was no further research in this direction. Moreover, note
that, a priori, there is no obvious relation between his notion and the
behaviour of the Brownian motion on $M$. Anyway, in the terminology of De
Lima, we have the following

\begin{definition}
\label{def-parab-lima} A Riemannian manifold $M$ is $\mathcal{A}$-parabolic if
for every solution of the problem
\[
\left\{
\begin{array}
[c]{ll}%
\Delta u\geq0 & \text{on }M\\
\sup_{M}u<+\infty &
\end{array}
\right.
\]
it holds
\[
\sup_{M}u=\sup_{\partial M}u.
\]

\end{definition}

As we already observed in Section \ref{subsection-ahlfors}, it is not
difficult to prove that the classical (i.e. Neumann) definition of
parabolicity implies the one introduced by De Lima. Namely,

\begin{proposition}
\label{GimplDL} Assume that $M$ is a $\mathcal{N}$-parabolic manifold with
boundary $\partial M\neq\emptyset$ and let $u$ be a solution of the problem%
\[
\left\{
\begin{array}
[c]{ll}%
\Delta u\geq0 & \text{on }M\\
\sup_{M}u<+\infty. &
\end{array}
\right.
\]
Then%
\[
\sup_{M}u=\sup_{\partial M}u.
\]

\end{proposition}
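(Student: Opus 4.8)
The plan is to reduce the statement to Theorem~\ref{th_ahlfors-wholeM}, which has already been established. Indeed, a function $u$ satisfying $\Delta u\geq 0$ on $M$ with $\sup_M u<+\infty$ in particular satisfies $\Delta u\geq 0$ on $\mathrm{int}M$ and is bounded above, so Theorem~\ref{th_ahlfors-wholeM} applies verbatim and yields $\sup_M u=\sup_{\partial M}u$. In this sense nothing new has to be done; the proposition is recorded separately only to make explicit the comparison with De Lima's notion of $\mathcal{A}$-parabolicity.

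For the reader who prefers a self-contained argument, I would reproduce the proof of Theorem~\ref{th_ahlfors-wholeM}. First I would argue by contradiction, assuming $\sup_M u>\sup_{\partial M}u$, and fix $\varepsilon>0$ so small that $\sup_M u>\sup_{\partial M}u+2\varepsilon$. Next I would introduce the open superlevel set $\Omega_{2\varepsilon}=\{x\in M:u(x)>\sup_M u-2\varepsilon\}$; the choice of $\varepsilon$ forces $\overline{\Omega}_{2\varepsilon}\subset\mathrm{int}M$, so that
\[
u_\varepsilon=
\begin{cases}
\max\{u,\ \sup_M u-\varepsilon\} & \text{on }\Omega_{2\varepsilon},\\
\sup_M u-\varepsilon & \text{on }M\backslash\Omega_{2\varepsilon},
\end{cases}
\]
is well defined, belongs to $C^0(M)\cap W^{1,2}_{loc}(M)$, and is constant in a neighborhood of $\partial M$.

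The key step is then to check that $u_\varepsilon$ is a weak Neumann subsolution of the Laplace equation on $M$: on $\mathrm{int}M$ it is the maximum of the subsolution $u$ and of a constant, hence again a subsolution, while near $\partial M$ it is locally constant, so the Neumann inequality $\partial u_\varepsilon/\partial\nu\leq 0$ holds trivially. Since moreover $\sup_M u_\varepsilon=\sup_M u<+\infty$, the assumed $\mathcal{N}$-parabolicity of $M$ forces $u_\varepsilon\equiv\sup_M u-\varepsilon$. But on the nonempty set $\Omega_{2\varepsilon}$ this says $u\leq\sup_M u-\varepsilon$, contradicting $\sup_{\Omega_{2\varepsilon}}u=\sup_M u$. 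I do not expect any genuine obstacle; the only point deserving a word of care is the verification that the glued function $u_\varepsilon$ is indeed a weak Neumann subsolution, which is routine precisely because the gluing takes place across a region where one of the two pieces is constant.
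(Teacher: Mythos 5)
Your proposal is correct and coincides with the paper's treatment: the paper proves this proposition precisely by observing that it is Theorem \ref{th_ahlfors-wholeM}, and your self-contained argument reproduces the paper's proof of that theorem (contradiction, the superlevel set $\Omega_{2\varepsilon}$ compactly contained in $\mathrm{int}M$, the glued function $u_{\varepsilon}$ constant near $\partial M$, and $\mathcal{N}$-parabolicity forcing $u_{\varepsilon}$ to be constant). Your version even fixes a small sign typo in the paper's choice of $\varepsilon$, where the displayed inequality should read $\sup_{M}u>\sup_{\partial M}u+2\varepsilon$.
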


Finally, a third fruitful definition comes from very recent works in the
theory of minimal surfaces in the Euclidean space, \cite{CKMR, Pe, LP, Ne}.
From the Brownian motion viewpoint, it states that $M$ is parabolic provided
the absorbed Brownian motion is recurrent, i.e., with probability 1 the particle reaches
the boundary (and dies) in a finite time. From a
deterministic viewpoint, this definition involves Dirichlet boundary
conditions on the relevant functions. In this context, a Riemannian manifold
is said to be parabolic if bounded harmonic functions are determined by their
boundary values. This is equivalent to the following

\begin{definition}
\label{def-parab-minim} A Riemannian manifold $M$ is $\mathcal{D}$-parabolic
if the unique solution of the problem%
\[
\left\{
\begin{array}
[c]{ll}%
\Delta u=0 & \text{on }M\\
u=0 & \text{on }\partial M\\
\sup_{M}\left\vert u\right\vert <+\infty &
\end{array}
\right.
\]
is the constant function $u\equiv0$.
\end{definition}

This notion of parabolicity has been used in the theory of minimal surfaces in
$\mathbb{R}^{3}$ because it turned out to be a powerful tool in order to face
the problem of determining which conformal structures are allowed on a minimal
surface subjected to some geometric restrictions on its image.

The notion of $\mathcal{D}$-parabolicity is related to the classical Neumann
one via the Ahlfors maximum principle. Indeed, the following result follows by
applying twice Proposition \ref{GimplDL} to $u$ and to $-u$.

\begin{proposition}
\label{GimplMin} Assume that $M$ is a $\mathcal{N}$-parabolic manifold with
boundary $\partial M\neq\emptyset$ and let $u$ be a solution of the problem%
\[
\left\{
\begin{array}
[c]{ll}%
\Delta u=0 & \text{on }M\\
u=0 & \text{on }\partial M\\
\sup_{M}\left\vert u\right\vert <+\infty &
\end{array}
\right.
\]
Then $u\equiv0$.
\end{proposition}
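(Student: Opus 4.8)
The plan is to obtain this statement as an immediate consequence of Proposition \ref{GimplDL}, applied once to $u$ and once to $-u$. The point is that a bounded harmonic function is simultaneously a bounded-above subsolution of the Laplace equation and the negative of a bounded-above subsolution, so the Ahlfors-type conclusion of Proposition \ref{GimplDL} is available for both $u$ and $-u$, and together these two one-sided bounds pinch $u$ to zero.

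Concretely, first I would observe that since $\Delta u=0$ on $M$ and $\sup_M|u|<+\infty$, the function $u$ satisfies $\Delta u\geq 0$ on $M$ and $\sup_M u\leq\sup_M|u|<+\infty$, so $u$ is admissible for Proposition \ref{GimplDL}. That proposition then yields
\[
\sup_M u=\sup_{\partial M}u=0,
\]
the last equality being the hypothesis $u=0$ on $\partial M$; hence $u\leq 0$ on all of $M$. Repeating the argument verbatim with $-u$ in place of $u$ (note $\Delta(-u)=0\geq 0$, $\sup_M(-u)<+\infty$, and $-u=0$ on $\partial M$) gives $\sup_M(-u)=\sup_{\partial M}(-u)=0$, i.e. $u\geq 0$ on $M$. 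Combining the two inequalities forces $u\equiv 0$, as claimed.

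There is essentially no obstacle beyond verifying that $u$ and $-u$ each meet the (mild) hypotheses of Proposition \ref{GimplDL}, which is immediate; the substantive work has already been done in Proposition \ref{GimplDL}, and before that in Theorem \ref{th_ahlfors-wholeM}. Alternatively, I would note that one can bypass Proposition \ref{GimplDL} and argue directly via Theorem \ref{th_ahlfors-wholeM}: applying that theorem to $u$ and to $-u$, both bounded and harmonic on $\mathrm{int}\,M$ with vanishing boundary values, gives $\sup_M u=\sup_{\partial M}u=0$ and $\inf_M u=-\sup_M(-u)=0$, whence $u\equiv 0$.
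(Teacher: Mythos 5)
Your proof is correct and is exactly the paper's argument: the paper states that Proposition \ref{GimplMin} follows by applying Proposition \ref{GimplDL} twice, to $u$ and to $-u$, which is precisely the pinching you carry out. Nothing further is needed.
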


In the theory of minimal surfaces in the Euclidean space, $\mathcal{D}%
$-parabolicity is not the only global property of surfaces with boundary that
has been studied. Another property of interest is the quadratic area growth
with respect to the extrinsic distance (see \cite{CKMR,Pe,Ne} for more details
and applications of this property). To be more precise, we say that a surface
$M\subset\mathbb{R}^{3}$ has \textit{quadratic area growth} if, for some $C>0$
and $A>0$, one has
\[
\mathrm{vol}(M\cap\{\sqrt{x_{1}^{2}+x_{2}^{2}+x_{3}^{2}}<R\})\leq CR^{2},
\]
for all $R>A$.

The notions of $\mathcal{D}$-parabolicity and quadratic area growth seem to
be, in general, unrelated concepts. For this reason, this global properties
have been studied separetely in the theory of minimal surfaces in
$\mathbb{R}^{3}$. However, according to Proposition \ref{GimplMin}, the volume
condition
\[
\frac{R}{\mathrm{vol}(B_{R}(o))}\notin L^{1}(+\infty),
\]
is sufficient to guarantee that a complete Riemannian manifold $M$ is
$\mathcal{D}$-parabolic. Hence, all the results obtained in this setting under
geometric conditions on the ambient space and exploiting $\mathcal{D}%
$-parabolicity can be obtained imposing a volume growth condition on the
surface instead. Moreover, since the volume of intrinsic balls is dominated by
that of extrinsic balls with the same radius, we conclude also that any
complete (e.g. properly immersed) surface in the Euclidean space with
quadratic area growth is $\mathcal{D}$-parabolic.

To give an example of how this circle of ideas applies we note that it was
conjectured by W. Meeks that any complete (or properly embedded) minimal graph
over a proper subdomain of the plane is $\mathcal{D}$-parabolic. In
\cite{Ne2}, using refined stochastic methods, R. Neel gave a positive answer
to this conjecture. Actually, he was able to prove that for a complete,
embedded minimal surface with boundary whose Gauss image is eventually
contained in a hyperbolic domain of the sphere, the Brownian motion strikes
the boundary almost surely in finite time. However, apparently, no proofs
based on analytic techniques of this fact has appeared yet in literature.

Nevertheless, it was observed by P. Li and J. Wang \cite[Lemma 1]{LW} that
minimal graphs in $\mathbb{R}^{n+1}$ supported on a domain $\Omega
\subset\mathbb{R}^{n}$ have the following (extrinsic) volume growth property%
\[
\mathrm{vol}(M\cap\{\sqrt{x_{1}^{2}+\cdots+x_{n}^{2}}<R\})\leq(n+1)\omega
_{n}R^{n},
\]
where $\omega_{n}$ denotes the volume of the $n$-dimensional unit sphere. In
particular, for a complete minimal graph $M$ in the Euclidean $3$-space,
\[
\mathrm{vol}(B_{R}(o))\leq\mathrm{vol}(M\cap\{\sqrt{x_{1}^{2}+x_{2}^{2}%
+x_{3}^{2}}<R\})\leq3\omega_{2}R^{2},
\]
where $B_{R}(o)$ denotes the geodesic ball in $M$ of radius $R$ centered at a
reference point $o\in\mathrm{int}M$. Hence, complete \ minimal graphs in
$\mathbb{R}^{3}$ have (intrinsic) quadratic volume growth. In view of
Proposition \ref{GimplMin}, we have then proved the following theorem, that
recovers the result by Neel.

\begin{theorem}
Any complete minimal graph in $\mathbb{R}^{3}$ supported on a domain of the
plane is $\mathcal{D}$-parabolic.
\end{theorem}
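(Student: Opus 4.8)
The idea is to deduce the statement by assembling two facts already proved above: the volume--growth test for $\mathcal{N}$-parabolicity (Theorem~\ref{th_growth}) and the implication ``$\mathcal{N}$-parabolic $\Rightarrow$ $\mathcal{D}$-parabolic'' contained in Proposition~\ref{GimplMin}. Therefore it suffices to show that a complete minimal graph $\Sigma=\Gamma_{u}(\Omega)\subset\mathbb{R}^{3}$ over a proper planar domain $\Omega$ has at most quadratic intrinsic volume growth, $\mathrm{vol}B_{R}^{\Sigma}(o)=O(R^{2})$ as $R\to+\infty$.

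First I would invoke the Li--Wang extrinsic volume estimate \cite[Lemma 1]{LW}: for a minimal graph over a domain $\Omega\subseteq\mathbb{R}^{n}$ one has
\[
\mathrm{vol}\big(\Sigma\cap\{|x|<R\}\big)\leq(n+1)\,\omega_{n}\,R^{n}.
\]
Its proof is a divergence theorem computation on $\Sigma$ against a radial Lipschitz cutoff, exactly in the spirit of Lemma~\ref{prop_height} but in the minimal case $H\equiv0$, so that the term carrying $\sup_{M}|u|$ drops out and no boundedness of $u$ is needed. For $n=2$ this reads $\mathrm{vol}(\Sigma\cap\{|x|<R\})\leq 3\omega_{2}R^{2}$.

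Next, since the Euclidean distance in $\mathbb{R}^{3}$ is bounded above by the intrinsic distance on $\Sigma$, for a fixed origin $o\in\mathrm{int}\,\Sigma$ one has $B_{R}^{\Sigma}(o)\subseteq\Sigma\cap\{|x-o|<R\}$, and therefore
\[
\mathrm{vol}B_{R}^{\Sigma}(o)\leq 3\omega_{2}R^{2}\qquad\text{for all }R>0.
\]
Consequently $R/\mathrm{vol}B_{R}^{\Sigma}(o)\geq(3\omega_{2}R)^{-1}\notin L^{1}(+\infty)$, so, $\Sigma$ being complete by hypothesis, Theorem~\ref{th_growth} yields that $\Sigma$ is $\mathcal{N}$-parabolic. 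Finally, Proposition~\ref{GimplMin} (applied, as explained there, to $u$ and to $-u$ via Proposition~\ref{GimplDL}) promotes $\mathcal{N}$-parabolicity to $\mathcal{D}$-parabolicity: any bounded harmonic function on $\Sigma$ vanishing on $\partial\Sigma$ is forced, through the two-sided Ahlfors maximum principle, to be identically zero.

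In this argument there is essentially no genuine obstacle: the only non-formal input is the Li--Wang bound, which is quoted and is in any case a particular instance of the computation behind Lemma~\ref{prop_height}. The one point deserving a line of care is checking that $\Sigma$, with the metric induced from $\mathbb{R}^{3}$, is a complete Riemannian manifold with non-empty boundary $\partial\Sigma=\Gamma_{u}(\partial\Omega)$, so that Theorem~\ref{th_growth} and Proposition~\ref{GimplMin} genuinely apply; but this is precisely the standing hypothesis on the graph.
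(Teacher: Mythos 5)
Your proposal is correct and follows essentially the same route as the paper: the Li--Wang extrinsic quadratic area bound for minimal graphs, the inclusion of intrinsic balls in extrinsic ones to get $\mathrm{vol}\,B_{R}^{\Sigma}(o)=O(R^{2})$, the volume-growth criterion of Theorem~\ref{th_growth} to obtain $\mathcal{N}$-parabolicity, and Proposition~\ref{GimplMin} to pass to $\mathcal{D}$-parabolicity. No discrepancies worth noting.
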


\bigskip

\end{document}